\DeclareMathOperator{\supp}{supp}
\DeclareMathOperator*{\argmin}{arg\,min}
\newcommand{\Cbochnorm}[1]{\|#1\|_{\mathcal{C}}}
\newcommand{\F}{\mathcal{F}}
\newcommand{\N}{\mathbb{N}}
\newcommand{\R}{\mathbb{R}}
\newcommand{\de}{\mathrm{d}}
\newcommand{\wnorm}[1]{\|#1\|_{W_T}}
\newcommand{\tr}[1]{tr(C(1)^{-1})}
\newcommand{\eps}{\varepsilon}
\newcommand{\vertiii}[1]{{\left\vert\kern-0.25ex\left\vert\kern-0.25ex\left\vert #1
    \right\vert\kern-0.25ex\right\vert\kern-0.25ex\right\vert}}
\newcommand*\rel@kern[1]{\kern#1\dimexpr\macc@kerna}
\newcommand*\widebar[1]{%
  \begingroup
  \def\mathaccent##1##2{%
    \rel@kern{0.8}%
    \overline{\rel@kern{-0.8}\macc@nucleus\rel@kern{0.2}}%
    \rel@kern{-0.2}%
  }%
  \macc@depth\@ne
  \let\math@bgroup\@empty \let\math@egroup\macc@set@skewchar
  \mathsurround\z@ \frozen@everymath{\mathgroup\macc@group\relax}%
  \macc@set@skewchar\relax
  \let\mathaccentV\macc@nested@a
  \macc@nested@a\relax111{#1}%
  \endgroup
}
\numberwithin{equation}{section}
\definecolor{darkred}{rgb}{.7,0,0}
\definecolor{green}{rgb}{0,0.7,0}
\theoremstyle{plain}
\newtheorem{theorem}{Theorem}
\newtheorem{prop}[theorem]{Proposition}
\newtheorem{lemma}[theorem]{Lemma}
\newtheorem{coroll}[theorem]{Corollary}
\theoremstyle{definition}
\newtheorem{assumption}{Assumption}
\theoremstyle{remark}
\newtheorem{remark}{Remark}
\newcommand{\tcb}{\textcolor{black}}
\newcommand{\black}{ \color{black} }
\newcommand{\bcdot}{ {\boldsymbol{\cdot}}}
\begin{document}
\title[]{Optimal feedback control of dynamical systems via value-function approximation}

\author {Karl Kunisch\textsuperscript{$*$}}
\thanks{\textsuperscript{$*$}University of Graz, Institute of Mathematics and Scientific
Computing, Heinrichstr. 36, A-8010 Graz, Austria and Johann Radon Institute for Computational and Applied Mathematics
(RICAM), Austrian Academy of Sciences, Altenberger Stra\ss{}e 69, 4040 Linz, Austria, ({\tt
karl.kunisch@uni-graz.at}).}
\author{Daniel Walter\textsuperscript{$\dagger$}}
\thanks{\textsuperscript{$\dagger$}Institut f\"ur Mathematik, Humboldt-Universit\"at~zu~Berlin, Rudower Chaussee 25, 10117 Berlin, Germany,({\tt
daniel.walter@hu-berlin.de}).}
\maketitle

\begin{abstract}
A self-learning approach for optimal feedback gains for finite-horizon  nonlinear continuous time control systems is proposed and analysed. It relies on parameter dependent approximations to the optimal value function obtained from a family of universal approximators. The cost functional for the training of an approximate optimal feedback law incorporates two main features. First, it contains the average over the objective functional values of the parametrized feedback control for an ensemble of initial values. Second, it is adapted to exploit the relationship between the maximum principle and dynamic programming.
Based on universal approximation properties,  existence, convergence and first order optimality conditions for optimal neural network feedback controllers are proved.
\end{abstract}

{\em{ Keywords:}}
optimal feedback control, neural networks, Hamilton-Jacobi-Bellman equation, self-learning,  reinforcement learning.

{\em{AMS classification:}}
49J15,   
49N35, 
68Q32,   	
93B52,   	
93D15.   	

\section{Introduction} \label{sec:Introduction}

In this paper we focus on optimal feedback control for problems of the form
\begin{equation} \label{def:refproblem}\tag{$P$}
\left\{
\begin{aligned}
\quad &\inf_{y,u} J(y,u) \coloneqq \frac{1}{2}\int_{0}^T \left
( |Q_1(y(t)-y_d(t))|^2+ \beta |u(t)|^2 \right)~\de t  +\frac{1}{2}|Q_2(y(T)-y^T_d)|^2\\
&s.t. \quad \dot{y}= {f}(y)+g(y)u, \quad y(0)=y_0, \text{ and } u \in L^2(0,T; \R^m),
\end{aligned}
\right.
\end{equation}
with nonlinear dynamics described by $f:[0,T]\times \R^n \to \R^n$. The system can be influenced by choosing a control input~$u$ which enters through a control operator $g:\R^n\to \R^{n \times m}$. We assess the perfomance of a given control by its objective functional value which comprises the (weighted) distance between the associated state trajectory~$y$ and a given desired state~$y_d$ as well as the norm of the control for some cost parameter~$\beta>0$. The weighting matrices $Q_i$, for $i=1,2$, are assumed to be symmetric positive semi-definite. Searching for an optimal control $u^*$  in feedback form requires to find a function $F^*:[0,T]\times \R^n \to \R^m$ such that
$$
u^*(t)= F^*(t,y^*(t)), \text{ for } t\in (0,T).
$$
Here $(u^*,y^*)$ denotes an optimal control-trajectory pair associated to \eqref{def:refproblem}.
Under appropriate conditions, see e.g. \cite{FS06}, the feedback mapping can be expressed as
\begin{equation}\label{eq:intro1}
F^*(t,y) = -\frac{1}{\beta}g^\top(t,y) \partial_y V^*(t,y),
\end{equation}
where $V^*$ stands for the value function associate to \eqref{def:refproblem}, i.e. for $(T_0,y_0)\in [0,T]\times \R^n$:
$$
V^*(T_0,y_0)= \min_{y,u} J_{T_0}(y,u), \text{ subject to } \dot{y}= {f}(y)+g(y)u, \quad y(T_0)=y_0,
$$
and
$$
J_{T_0}(y,u)=\frac{1}{2}\int_{T_0}^T \left
( |Q_1(y(t)-y_d(t))|^2+ \beta |u(t)|^2 \right)~\de t  +\frac{1}{2}|Q_2(y(T)-y^T_d)|^2.
$$
The value function $V^*$ satisfies a Hamilton-Jacobi-Bellman (HJB) equation which is a time-dependent first order hyperbolic equation of spatial dimension $n$. Numerical realisations, therefore, are plagued by the curse of dimensionality. Indeed  a direct solution of the HJB equation already becomes computationally prohibitive for moderate dimensions~$n$.

 Therefore, for practical realization, the interest in alternative techniques arises. In many situations of practical relevance researches have relied on linear approximations to the nonlinear dynamical system and have treated the resulting linear-quadratic problem by Riccati techniques. Much research has concentrated on validating this approach locally around a reference trajectory. Globally such a strategy may fail, see for instance \cite{dkk19, KW2020}.

In this paper we follow an approach, possibly first proposed in \cite{KW2020}, circumventing the construction of the value function on the basis of solving the HJB equation. Rather the feedback mapping is constructed by an unsupervised self-learning technique. In practice, this requires the approximation of~$V^*$ by a family of functions~$V_\theta$ which are parametrized by a finite dimensional vector~$\theta$ and satisfy a uniform approximation property. Possible families of universal approximators include, e.g., neural networks or piecewise polynomial approximations. Subsequently, in view of \eqref{eq:intro1}, we introduce the corresponding feedback law
\begin{equation}\label{eq:intro2}
F_\theta(t,y) = -\frac{1}{\beta}g^\top(y) \partial_y V_\theta(t,y), \text{ for } (t,y)\in [0,\infty) \times \R^n,
\end{equation}
as approximation to $F^*$. An "optimal" parametrized feedback law is then determined by a variant of the following self-learning, structure preserving, variational problem:
\begin{equation}\label{eq:intro3}
\begin{array}l
\min_{\theta}\; J(y,\F_{\theta}(y))\\[1.5ex]
\qquad \; +\frac{1}{2}\int^T_0 {\gamma_1} |V_{\theta}(t,y(t))-J_t(y,F_{\theta}(\cdot,y))|^2 + {\gamma_2}|\partial_y V_{\theta}(t,y(t))-p(t)|^2~\mathrm{d}t +\frac{\gamma_\eps}{2}  {|\theta|^2}\\[1.8ex]
\text{s.t. }   \quad \dot{y}= {f}(y)+g(y) F_{\theta}(y), \quad y(0)=y_0, \quad p(T)= Q^\top_2 Q_2(y(T)-y^T_d)\\[1.5ex]
\qquad \;- \dot p= {f}(y)^\top p+\lbrack D {g}(y)^\top F_\theta (y)\rbrack p+Q_1^\top Q_1(y-y_d).
\end{array}
\end{equation}
In this problem, minimization with respect to $u$ is replaced by minimizing with respect to the parameters $\theta$ which characterize $V_\theta$ and~$F_\theta$. The cost functional of problem \eqref{eq:intro3} consists of four parts: The first term represents the objective functional of \eqref{def:refproblem} where the control~$u$ is replaced by the closed loop expression~$F_\theta(y)$. The next two terms realize the fact that $V_\theta$ is constructed as  approximation to the value function associated to $\eqref{def:refproblem}$ and exploit the  well-known property that, under certain conditions, the gradient of the value function coincides with the solution of a suitable adjoint equation, see e.g. \cite[page 21]{FS06}. The final term penalizes the norm of the structural parameters.
We point out that $V_\theta$ and $F_\theta$ are learned along the
orbit $\mathcal {O}= \{y(t;y_0): t\in (0,\infty)\}$ within the state space $\R^n$.
To  accommodate the case that one trajectory does not provide enough information, we
propose to involve  an ensemble of orbits departing from a set $Y_0$ of initial
conditions, and to reformulate problem \eqref{eq:intro3} accordingly. This will be done in Section 4 below.

In our earlier work on learning a feedback function, \cite{KW2020}, we considered  infinite horizon optimal control problems. In that case, the time-dependent HJB equation results in a stationary one. There we had not yet incorporated the structure preserving terms involving $V_\theta$ and $\partial_y V_\theta$ into the cost. Moreover we directly constructed an approximation  $F_\theta$ to the vector valued function $F^*$, rather than approximating the scalar valued function $V^*$ and subsequently using \eqref{eq:intro2}. In the present paper we provide the theoretical foundations for the learning based technique that we propose  to construct an approximation to the optimal feedback function for \eqref{def:refproblem}. Recently in \cite{onlfor2020} a variant of the  approach as in \cite{KW2020} was used for  interesting numerical investigations to construct optimal feedback functions for finite horizon  multi-agent optimal control problems.

Let us very briefly  mention some of  the vast literature
on solving the HJB equations. Semi-Lagrangian schemes and finite difference methods have been deeply investigated to directly solve HJB equations directly, see e.g. \cite{bggk13, ff16, kkr18}.
Significant progress was made in solving high
dimensional HJB equations by the of use policy iterations combined with tensor calculus techniques, \cite{dkk19,
kk18, foss2020}. The use of Hopf formulas was proposed in e.g. \cite{lr86,
cloy18}. Interpolation techniques, utilizing ensembles of open loop
solutions have been analyzed in the works of \cite{akk2020, ngk19}, for example.
Finally we mention that optimal feedback control is intimately
related to reinforcement learning, see e.g. the monograph
\cite{bert19}, and also the survey articles \cite{LV09, recht18,
vls14}.

The manuscript is structured as follows. Some pertinent notation is gathered in Section 2.
In Section 3 concepts of optimal feedback control, semi-global with respect to the initial condition $y_0$, are gathered. Section 4 is devoted to describing the learning technique that we propose to approximate the optimal feedback function. In Section 5 the required  assumptions on approximating subspaces  are checked for a class of neural networks and a class of piecewise polynomials. Existence of solutions to the approximating learning problems is proved in Section 6. Their convergence is analyzed in Section 7. The case of learning from finitely many orbits is the focus of Section 8. Section 9 provides an example illustrating the numerical feasibility of the proposed
method. We do not aim for sophistication in this respect.
The appendix details the proofs of several necessary technical results.

\section{Notation}

For ~$I:=(0,T)$, with $T>0$, we define
$
W_T= \{\,y \in L^2(I; \R^n)\;|\;\dot{y}\in L^2(I; \R^n)\,\},
$
where the temporal derivative is understood in the distributional sense. We equip~$W_T$ with the norm induced by the inner product
\begin{align*}
(y_1,y_2)_{W_T}=(\dot{y}_1,\dot{y}_2)_{L^2(I;\R^n)}+(y_1,y_2)_{L^2(I;\R^n)} \quad \text{for } y_1, y_2 \in W_T,
\end{align*}
making it a Hilbert space.
We recall that $W_T$ embeds continuously into $C(\bar I; \R^n)$.
For a compact  metric space~$X$ we denote the space of continuous functions between~$X$ and~$Y$ by~$\mathcal{C}(X;Y)$ which we endow with
$
\|\varphi\|_{\mathcal{C}(X;Y)}= \max_{x \in X} \|\varphi(x)\|_Y$  as norm.  By $Y_0$ we denote a compact set of initial conditions in $\R^n$. When arising as index, the space $\mathcal{C}(Y_0;W_T)$ will frequently  be abbreviated by $\mathcal{C}$. The space~$\mathcal{C}^1(X;Y)$ of continuously differentiable functions is defined analogously.
Open balls of radius $\eps$ in a Banach space $X$ with center $x$ will be denoted by $B_{\eps}(x)$.
The space of bounded linear operators between Banach spaces~$X$ and~$Y$, endowed with the canonical norm, is denoted by~$\mathcal{B}(X,Y)$.
We further abbreviate~$\mathcal{B}(X):=\mathcal{B}(X,X)$.

\section{Semi-global optimal feedback control}
\label{sec:o}
Consider the controlled nonlinear dynamical system of the form
\begin{align}
\label{eq:openloop}
\dot{y}= \mathbf{f}(y)+\mathbf{g}(y) u \quad \text{in}~L^2(I;\R^n), \quad y(0)=y_0,
\end{align}
described by  Nemitsky operators
\begin{equation}\label{eq:spaces}
\begin{array}l
\mathbf{f}\colon W_T \to L^2(I;\R^n), \quad \mathbf{f}(y)(t)= f(t,y(t)) \\[1.3ex]
\mathbf{g} \colon W_T  \to \mathcal{L}(L^2(I;\R^m);L^2(I;\R^n)), \quad \mathbf{g}(y)(t)= g(t,y(t))
\end{array}
\end{equation}
for~$a.e.~t\in I$,~$f \colon I \times \R^n  \to \R^m$ and~$g \colon I \times \R^n  \to \R^{n \times m}$. The smoothness requirements on~$f$ and~$g$ will be  detailed in Assumption~\ref{ass:feedbacklaw} below. Our aim  is to choose a control input~$u^*\in L^2(I;\R^m)$ which keeps the associated solution~$y^* \in W_T$  close to a known reference trajectory~$y_d$, while keeping the control effort small. This is formulated as the constrained minimization problem
\begin{equation} \label{def:openloopproblem}\tag{$P_{y_0}$}
\left\{
\begin{aligned}
\quad &\inf_{y \in W_T,\, u \in L^2(I; \R^m)} J(y,u) \\
&s.t. \quad \dot{y}= \mathbf{f}(y)+\mathbf{g}(y) u, \quad y(0)=y_0,
\end{aligned}
\right.
\end{equation}
where
\begin{align*}
J(y,u)=\frac{1}{2}\int_{I} \left ( |Q_1(y(t)-y_d(t))|^2+ \beta |u(t)|^2   \right )~\de t+\frac{1}{2}|Q_2(y(T)-y^T_d)|^2,
\end{align*}
which incorporates the weighted misfit between the trajectory ~$y$ within the time horizon $I=(0,T)$ and at the terminal time  to desired states~$y_d\in  L^2(I;\R^n)$ and $y_d^T \in \R^n$, as well as the norm of the control~$u$.  While this ~\textit{open loop} optimal control problem captures well the objective formulated above, it comes with several disadvantages. First, its solution is a function of time only, and does not include the current state $y(t)$. This makes the open loop approach susceptible to possible perturbations in the dynamical system. Second, determining the control action for a new initial condition requires to solve~\eqref{def:openloopproblem} from the start.

The aforementioned limitations of open loop optimal controls motivate the study of~\textit{semi-global optimal feedback control} approaches to \eqref{def:openloopproblem}. More precisely, given a compact set~$Y_0 \subset \R^n $, we look for a feedback function~$F^* \colon I \times \R^n \to\R^m$ which induces a Nemitsky operator
\begin{align*}
\F^* \colon W_T \to {L^2}(I; \R^m) , \quad \F^*(y)(t)=F^*(t,y(t)) \quad \text{for a.e.}~t \in I,
\end{align*}
such that for every~$y_0 \in Y_0$ the~\textit{closed loop system}
\begin{align} \label{eq:cloloop}
\dot{y}= \mathbf{f}(y)+ \mathbf{g}(y) \mathcal{F}^*(y), \quad y(0)=y_0,
\end{align}
admits a unique solution~$y^*( y_0 ) \in W_T$ and~$(y^*( y_0), \mathcal{F}^*(y^*(y_0 )))$ is a minimizing pair of~\eqref{def:openloopproblem}.

The determination of an optimal feedback function usually rests on the computation of the value function to\eqref{def:openloopproblem} which is defined as

\medskip

\begin{align} \label{def:valuefunc}
V^*(T_0,y_0):= \min_{\substack{y \in H^1(T_0,T;\R^n), \\ u \in L^2(T_0,T; \R^m)}} J_{T_0}(y,u) \quad s.t. \quad \dot{y}= \mathbf{f}(y)+ \mathbf{g}(y) u,~ \quad y(t_0)=y_0,
\end{align}

\medskip

\noindent
where~$(T_0,y_0)\in I \times \R^n$, and $J_{T_0}(y,u)$ is defined as
\begin{align*}
J_{T_0} (y,u)= \frac{1}{2}\int^T_{T_0} \left ( |Q_1(y(t)-y_d(t))|^2+ \beta |u(t)|^2   \right )~\de t+\frac{1}{2}|Q_2(y(T)-y_d(T))|^2.
\end{align*}
By construction $V^*$ satisfies the final time boundary condition
\begin{align*}
V^*(T,y_0)=\frac{1}{2} |Q_2(y_0-y_d(T))|^2 \quad \forall y_0 \in \R^n.
\end{align*}
If~$V^*$ is continuously differentiable in a neighborhood of some~$(t,y_0) \in I \times \R^n$ then it solves
the instationary~\textit{Hamilton-Jacobi-Bellman (HJB) equation}
\begin{align} \label{eq:HJB}
\partial_t V^*(t,y_0) +(f(y_0), \partial_y V^*(t,y_0))_{\R^n}- \frac{1}{2\beta}|g(t,y_0)^\top\partial_y V^*(t,y_0)|^2+ \frac{1}{2} |Q_1 (y_0-y_d(t))|^2=0
\end{align}
in the classical sense there, see e.g. \cite{ff14,FS06}. Here~$\partial_t V^*$ denotes the partial derivative of the value function with respect to~$t$ and~$\partial_y V^*$ is the gradient of~$V^*$ with respect to the~$y$-variable.
An optimal control for~\eqref{def:openloopproblem} in feedback form is then given by
$u^*=-\frac{1}{\beta} \mathbf{g}(y^*)^\top\partial_y \mathcal{V}^*(y^*)$ where~$\partial_y \mathcal{V}^*(y^*)(t)=\partial_y V^*(t,y^*(t))$ for every~$t\in I$, and~$y^*=y^*(y_0) \in W_T$ solves the closed loop system
\begin{align*}
\dot{y}=\mathbf{f}(y)- \frac{1}{\beta}\mathbf{g}(y) \mathbf{g}(y)^\top \partial_y \mathcal{V}^*(y), \quad y(0)=y_0.
\end{align*}
Thus
\begin{align*}
\left(y^*(y_0), -\frac{1}{\beta}\mathbf{g}(y^*(y_0))^\top \partial_y \mathcal{V}^*(y^*(y_0))\right) \in \argmin \eqref{def:openloopproblem}
\end{align*}
and the function
\begin{align*}
F^*(\cdot,\cdot)=-\frac{1}{\beta} g(\cdot,\cdot)^\top \partial_y V^*(\cdot,\cdot)
\end{align*}
is an optimal feedback law.

Realizing the optimal feedback  in this way requires a solution to \eqref{eq:HJB} which is a partial differential equation on~$\R^n$. This can be extremely challenging or even impossible depending on the dimension $n$ and the computational facilities at hand.  Similarly to our previous manuscript \cite{KW2020} we take a different approach by formulating  minimization problem over a suitable set of feedback functions involving the closed loop system as a constraint. This relates to a learning problem, within which the feedback functions are trained to achieve optimal stabilization. This  makes the problem computationally amenable.

The procedure just described will be formalized in the following section. Here we first  summarize the  assumptions on the nonlinear dynamical system that we refer to throughout the paper.
\begin{assumption} \label{ass:feedbacklaw}
\leavevmode
\begin{itemize}
\item[\textbf{A.1}] The functions~$f \colon I \times \R^n \to \R^n$ and~$g \colon I \times \R^{n} \to \R^{n \times m}$ are twice continuously differentiable. Their Jacobians and Hessians with respect to the second variable, denoted by~$D_{y}f, D_{yy}f$, and~$D_{y}g, D_{yy}g$, respectively, are Lipschitz continuous on compact sets, uniformly for $t\in I$.
\item [\textbf{A.2}] There exists a constant~$M_{Y_0}>0$ such that the value function~$V(\cdot ,\cdot)\colon I \times \R^n \to \R$ for~\eqref{def:openloopproblem} is twice continuously differentiable on~$I \times \bar{B}_{2\widehat{M}}(0)$ with Lipschitz continuous gradient and Hessian (w.r.t. $y$ uniformly in~$t \in I$) where
\begin{align}\label{eq:aux1}
\widehat M = M_{Y_0}  \, \|\imath\|_{\mathcal{B}(W_T,\, \mathcal{C}(I;\R^n))},
\end{align}
and $\imath$ denotes the embedding of $W_T$ into $\mathcal{C}(I;\R^n))$.
\end{itemize}
\end{assumption}

As a consequence of (\textbf{A.1}), the Nemitsky operators~$\mathbf{f},~\mathbf{g}$ are at least two times continuously differentiable with domains and ranges as defined in \eqref{eq:spaces}. Their derivatives, denoted by~$D \mathbf{f}$ and~$D \mathbf{g}$, are the Nemitsky operators induced by~$D_y f$ and~$D_y g$. We point out~$D \mathbf{g}(y)\in \mathcal{B}(W_T; L^2(I;\R^m); L^2(I;\R^n))$. Moreover~$ \mathbf{f}, D\mathbf{f}, \mathbf{g},D \mathbf{g}$ are Lipschitz continuous and bounded, on bounded subsets of $L^\infty(I;\R^n)$, and thus in particular on~$\mathcal{Y}_{ad}\subset W_T$, where
\begin{align}\label{eq:kk13}
\mathcal{Y}_{ad}:= \left \{\,y \in W_T\;|\;\wnorm{y}\leq 2 M_{Y_0}\,\right\}.
\end{align}
Finally $D\mathbf{f}^\top \in{\mathcal B}(W_T,L^2(I;\R^n))$ denotes the Nemitsky operator associated to $D_y f^\top$.

Analogously, due to (\textbf{A.2}), ~$V^*$ induces a twice Lipschitz continuously Fr\'echet differentiable Nemitsky operator~$\mathcal{V}^*: \mathcal{Y}_{ad}\subset W_T \to L^{\tcb{2}}(I)$.
Moreover~$\mathcal{V}^*$ and its first derivative~$D\mathcal{V}^* $ are weak-to-strong continuous.
Define the Nemitsky operator
\begin{align} \label{def:optfeddnemitsk}
\F^* \colon \mathcal{Y}_{ad}\to  L^2(I;\R^n), \tcb \quad \F^*(y)=-\frac{1}{\beta} \mathbf{g}(y)^\top \partial_y \mathcal{V}^*(y),
\end{align}
where~$\partial_y \mathcal{V}^*$ is the Nemitsky operator induced by the gradient~$\partial_y V^*=D_y V (\cdot,\cdot)^\top$.
Note also that $\F^* \in C^1(W_T; (L^2(I;\R^m);L^2(I;\R^n)))$.
We further assume the following:
\begin{itemize}
\item [\textbf{A.3}]For every~$y_0 \in Y_0$  there exists  a unique function~$y=\mathbf{y}^*(y_0)\in W_T$ satisfying
\begin{align*}
\dot{y}= \mathbf{f}(y)+ \mathbf{g}(y) \F^*(y), \quad y(0)=y_0, \quad \wnorm{y}\leq M_{Y_0}.
\end{align*}
Moreover we have
\begin{align*}
(y^*(y_0), \F^*(y^*(y_0))) \in \argmin \eqref{def:openloopproblem} \quad \forall y_0 \in Y_0.
\end{align*}
\end{itemize}
When referring to Assumption 1 we mean (\textbf{A.1})-(\textbf{A.3}).
We emphasize that the  constant $M$ appearing in (\textbf{A2}) and (\textbf{A3}) is assumed to be same.
Note further that as a consequence of (\textbf{A3}) problem \eqref{def:openloopproblem} admits a solution for each $y_0 \in Y_0$, with the optimal control given by $u^*=  \F^*(y^*(y_0)))$.

\begin{remark}\label{rem3}
Using $(\mathbf{A.1})$, $(\mathbf{A.3})$ as well as the implicit function theorem it can be readily be verified that the mapping~$\mathbf{y}^*\colon Y_0 \to W_T$ from~($\mathbf{A.3}$) is continuously differentiable. Given~$\delta y_0 \in \R^n$ the directional derivative~$\delta y\coloneqq \partial\mathbf{y}^*(y_0)(\delta y_0)$ of~$\mathbf{y}^*$ at~$y_0 \in Y_0$ in direction $\delta y_0$ satisfies the linearized ODE system
\begin{align*}
\dot{\delta y}= D \mathbf{f}(\mathbf{y}^*(y_0))\delta y+ \lbrack D\mathbf{g}(\mathbf{y}^*(y_0))\delta y\rbrack \F^*(\mathbf{y}^*(y_0)) +\mathbf{g}(\mathbf{y}^*(y_0))D\F^*(\mathbf{y}^*(y_0))\delta y,\, \delta y(0)= \delta y_0.
\end{align*}
 Here $D \mathbf{g}$ is induced by $D_yg$  which is given by
\begin{align*}
\left \lbrack D_y g(t,y) \delta y \right\rbrack_{ij}= \left( \sum^n_{k=1} \partial_k g_{ij}(t,y) \delta y_k \right) \quad \forall \delta y \in \R^n,
\end{align*}
where~$g(y)=(g_{ij})$ and~$"\partial_k "$ denotes the partial derivative w.r.t to the~$k$-th component of~$y$. The transposed~$D\mathbf{g}(y)^\top$, which will arise in the adjoint equation below, is induced by the tensor~$D_y g(t,\cdot)^\top=(D_y g(t,\cdot)_{kji})\in \R^{n\times n\times m}$, with $t\in I$. In particular, we readily verify that ~$D \mathbf{g}(\cdot)^\top \in \mathcal{B}(L^2(I;\R^m); \mathcal{B}(W_T; L^2(I;\R^n)))$.
\end{remark}

To end this section we collect structural  information on the relation between the adjoined state, denoted by ${p}$ below, the optima value function~$V^*$, and the induced optimal feedback law~$\F^*$.
\begin{prop} \label{prop:structure}
Let Assumption~\ref{ass:feedbacklaw} hold. Then there exists a unique continuous mapping~$\mathbf{p}^* \colon Y_0 \to W_T$ such that for each~$y_0 \in Y_0$  the tuple~$(y,p)=(\mathbf{y}^*(y_0),\mathbf{p}^*(y_0))$ satisfies
\begin{align}
\frac{d}{dt} y&=\mathbf{f}(y)+\mathbf{g}(y)\F^*(y),~ y(0)=y_0, \label{eq:stateprop} \\
-\frac{d}{dt} p&= D\mathbf{f}(y)^\top p+\lbrack D \mathbf{g}(y)^\top\F^*(y)\rbrack p+Q_1^\top Q_1(y-y_d),~ p(T)= Q^\top_2 Q_2(y(T)-y^T_d), \label{eq:adjointprop} \\
\F^*(y)&=- \frac{1}{\beta} \mathbf{g}(y)^\top p. \label{eq:gradienteqprop}
\end{align}
Moreover we have
\begin{align} \label{eq:dynamicalprog}
V^*(t,y(t))=J_t(y(t),F^*(t,y(t))),~p(t)= \partial_y V(t,y(t)) \quad \forall t\in [0,T].
\end{align}
\end{prop}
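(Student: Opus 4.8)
The plan is to \emph{construct} the adjoint state directly from the value function and then verify, using the HJB equation, that it solves the claimed boundary value problem; this avoids solving \eqref{eq:adjointprop} and identifying its solution afterwards. Fix $y_0 \in Y_0$, let $y := \mathbf{y}^*(y_0)$ be the orbit from (\textbf{A.3}), so that \eqref{eq:stateprop} holds and $\wnorm{y} \le M_{Y_0}$; consequently $\|y\|_{\mathcal{C}(\bar I;\R^n)} \le \widehat M$ and the whole orbit lies in the interior of $I \times \bar B_{2\widehat M}(0)$, where $V^*$ is twice continuously differentiable by (\textbf{A.2}). I then set
\[
p := \partial_y \mathcal{V}^*(y), \qquad\text{that is}\qquad p(t) = \partial_y V^*(t, y(t)),\ t \in \bar I .
\]
With this choice \eqref{eq:gradienteqprop} is precisely the definition \eqref{def:optfeddnemitsk} of $\F^*$, and the second identity in \eqref{eq:dynamicalprog} holds by construction. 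Since $V^*$ is $C^2$ with Lipschitz Hessian and $y \in W_T \hookrightarrow \mathcal{C}(\bar I;\R^n)$, the chain rule gives $p \in W_T$, with
\[
\dot p(t) = \partial_t\partial_y V^*(t,y(t)) + \partial_{yy} V^*(t,y(t))\,\dot y(t) \quad \text{a.e. in } I,
\]
where the second term lies in $L^2(I;\R^n)$ because $\partial_{yy}V^*$ is bounded along the (compact) orbit and $\dot y \in L^2(I;\R^n)$.

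The technical core is to check that this $p$ satisfies \eqref{eq:adjointprop}. For this I would differentiate the HJB equation \eqref{eq:HJB} with respect to $y$ on the open region where $V^*$ is $C^2$. Abbreviating the derivatives of $V^*$ by $V_y, V_{yy}, V_{ty}$, this yields, at each point $(t,y(t))$,
\[
V_{ty} + V_{yy} f + D_y f^\top V_y - \tfrac{1}{2\beta}\,\partial_y|g^\top V_y|^2 + Q_1^\top Q_1(y - y_d) = 0 .
\]
Substituting the closed-loop dynamics $\dot y = f - \tfrac1\beta g g^\top V_y$ into the chain-rule formula for $\dot p$ and eliminating $V_{ty}+V_{yy}f$ by the displayed relation, the claim \eqref{eq:adjointprop} reduces, after cancelling the $D_y f^\top V_y$ and $Q_1^\top Q_1(y-y_d)$ terms, to the pointwise identity
\[
\tfrac{1}{2\beta}\,\partial_y|g^\top V_y|^2 - \tfrac1\beta V_{yy}\,g g^\top V_y = -\bigl[D_y g^\top \F^*\bigr] V_y ,
\]
where I have inserted $\F^* = -\tfrac1\beta g^\top V_y$. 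Expanding both sides componentwise with the tensor conventions of Remark~\ref{rem3}, the Hessian contribution on the left cancels and both sides reduce to $\tfrac1\beta \sum_{i,j}(\partial_k g_{ij})\,(g^\top V_y)_j\,(V_y)_i$ in the $k$-th component. I expect this bookkeeping — correctly transposing $D_y g$ and verifying the cancellation of the Hessian terms — to be the main obstacle, since it is the only place where the precise structure of the control-affine Hamiltonian is used. The terminal condition is obtained by differentiating $V^*(T,\cdot) = \tfrac12|Q_2(\cdot - y_d^T)|^2$, giving $p(T) = Q_2^\top Q_2(y(T)-y_d^T)$.

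It remains to prove the first identity in \eqref{eq:dynamicalprog}, the continuity of $y_0 \mapsto \mathbf{p}^*(y_0)$, and uniqueness. The identity $V^*(t,y(t)) = J_t(y,\F^*(y))$ is Bellman's principle of optimality: by (\textbf{A.3}) the pair $(y,\F^*(y))$ is optimal for \eqref{def:openloopproblem}, and a standard concatenation (sub-optimality) argument shows that its restriction to $[t,T]$ is optimal for the subproblem started at $(t,y(t))$, whose value is $V^*(t,y(t))$ by \eqref{def:valuefunc}. For continuity I would use that $\mathbf{y}^*\colon Y_0 \to W_T$ is continuous by Remark~\ref{rem3}: if $y_0^n \to y_0$ then $\mathbf{y}^*(y_0^n) \to \mathbf{y}^*(y_0)$ in $W_T$, hence uniformly and with $L^2$-convergent derivatives, and since the orbits stay in the fixed compact set on which $\partial_t\partial_y V^*$ and $\partial_{yy}V^*$ are continuous and bounded, both $p^n \to p$ and $\dot p^n \to \dot p$ in $L^2(I;\R^n)$, i.e. $\mathbf{p}^*(y_0^n)\to\mathbf{p}^*(y_0)$ in $W_T$. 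Finally, uniqueness is immediate: $y=\mathbf{y}^*(y_0)$ is the unique orbit by (\textbf{A.3}), and with $y$ fixed \eqref{eq:adjointprop} is a linear backward ODE with bounded coefficients (by (\textbf{A.1})), hence has a unique solution in $W_T$.
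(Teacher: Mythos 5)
Your proof is correct, but it takes a genuinely different route from the paper's. The paper treats the Pontryagin side as primitive: it \emph{defines} $p$ as the solution of the linear backward ODE \eqref{eq:adjointprop} (existence, uniqueness and continuous dependence on $y$ follow from (\textbf{A.1})--(\textbf{A.2}) and Gronwall), observes that \eqref{eq:stateprop}--\eqref{eq:gradienteqprop} are precisely the first-order necessary optimality conditions for \eqref{def:openloopproblem} with the optimal control $u=\F^*(y)$ furnished by (\textbf{A.3}), gets continuity of $\mathbf{p}^*$ from continuity of $\mathbf{y}^*$ (Remark~\ref{rem3}), and then obtains \emph{both} identities in \eqref{eq:dynamicalprog} --- including $p(t)=\partial_y V^*(t,y(t))$ --- as consequences of the dynamic programming principle, invoked as a known result. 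You instead take the value function as primitive: you define $p:=\partial_y V^*(\cdot,y(\cdot))$, so that \eqref{eq:gradienteqprop} and the second identity in \eqref{eq:dynamicalprog} hold by the definition \eqref{def:optfeddnemitsk} of $\F^*$, and you then \emph{verify} \eqref{eq:adjointprop} by differentiating the HJB equation \eqref{eq:HJB} in $y$ along the closed-loop orbit. Your reduced pointwise identity is the right one: with the tensor conventions of Remark~\ref{rem3}, the term $\tfrac{1}{2\beta}\partial_y|g^\top V_y|^2$ splits into $\tfrac{1}{\beta}V_{yy}gg^\top V_y$ plus the first-derivative-of-$g$ term, the Hessian contributions cancel against the chain-rule term $V_{yy}\dot y$, and what remains on both sides is $\tfrac{1}{\beta}\sum_{i,j}(\partial_k g_{ij})(g^\top V_y)_j (V_y)_i$, exactly as you state. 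What each approach buys: the paper's proof is shorter but leans on the maximum principle and on the known adjoint/value-gradient relation; yours is self-contained modulo the classical-solution property of the HJB equation, which the paper itself grants under (\textbf{A.2}) and which applies along the orbit since $\|y\|_{\mathcal{C}(\bar I;\R^n)}\le \widehat M$ keeps $(t,y(t))$ inside the smoothness region. Your uniqueness and continuity arguments coincide in substance with the paper's (uniqueness of the linear backward ODE given $y$; continuity of $\mathbf{y}^*$ plus the regularity of $\partial_{y}V^*$ and $\partial_{yy}V^*$ on the compact region), so no gap remains.
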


\begin{proof}[Proof of Proposition \ref{prop:structure}]
By (\textbf{A.3}) problem \eqref{def:openloopproblem} admits a solution for each $y_0 \in Y_0$. Then (\textbf{A.1})-(\textbf{A.2}) guarantee that
~\eqref{eq:adjointprop}, with $y=y(y_0)\in W_T$ the state component  of a solution to \eqref{def:openloopproblem},  admits a unique solution $p$ in~$W_T$ which continuously depends on~$y\in W_T$.
Moreover  \eqref{eq:stateprop} - \eqref{eq:gradienteqprop} represent the first order necessary optimality condition for~\eqref{def:openloopproblem} with the optimal control $u(t)= \F^*(y(t))$. Since $\mathbf{y}^*\colon Y_0 \to W_T$ is continuous as mentioned in Remark \ref{rem3} and the solution to \eqref{eq:adjointprop} depends continuously on $y\in W_T$, the claimed continuity $\mathbf{p}^* \colon Y_0 \to W_T$ follows.
Equation~\eqref{eq:dynamicalprog} is a direct consequence of the dynamic programming principle, and (\textbf{A.3}).
\end{proof}

\section{Optimal feedback control by value function approximation}\label{sec:learnfeedback}
This section is devoted to introducing a family of computationally tractable minimization problems from which we will "learn" approximations of optimal feedback laws. Our approach rests on two main pillars. First, given~$\eps>0$, we consider a family of functions $V^\eps_{\theta}  \in \mathcal{C}(I\times \R^n) $ which are finitely parametrized by~$\theta \in \mathcal{R}_\eps \simeq \R^{N_\eps}$,~$N_\eps \in \N$. These serve as "discrete" approximations of the optimal value function~$V^*$. The following a priori estimate is assumed, for some fixed $\eps_0 >0$:
\begin{assumption} \label{ass:approxsmoothness}
For every~$0 <\eps \leq \eps_0$ there holds~$V_{\cdot}^\eps \in\mathcal{C}^4(\mathcal{R}_\eps \times \R \times \R^n)$ and~$V^\eps_{\theta}(T,y_0)=\frac{1}{2} |Q_2(y_0-y_d(T))|^2$ for every~$y_0 \in \R^n$ and~$\theta \in \mathcal{R}_\eps$.
Moreover there exists~$\theta_\eps \in \mathcal{R}_{\eps}$ with
\begin{align} \label{eq:approxcapa}
\max_{\substack{ t \in I, \\ |y| \leq 2 \widehat{M} }}  |V^\eps_{\theta_\eps}(t,y)-V^*(t,y)|+|\partial_y (V^\eps_{\theta_\eps}(t,y)-V^*(t,y))| +\|\partial_{yy}(V^\eps_{\theta_\eps}(t,y)-V^*(t,y))\|  \leq c \eps
\end{align}
for some~$c>0$ independent of~$\eps\in (0,\eps_0]$.
\end{assumption}
Now recall from ~\eqref{def:optfeddnemitsk}
that the optimal feedback law~$\mathcal{F}^*$  is the superposition operator induced by~$F^*(t,y)=-(1/\beta)g(t,y)^\top \partial_y V^*(t,y) $. With the aim of preserving the dependence of the feedback law on the value function in our approximation, we define a set of parametrized feedback laws~$\mathcal{F}^\eps_\theta$ associated to~$V^\eps_\theta$,~$\theta \in \mathcal{R}_\eps$, by
\begin{align*}
\mathcal{F}^\eps_\theta(y)(t)= F^\eps_\theta(t,y(t))= -\frac{1}{\beta} g(t,y(t)) \partial_y V^\eps_\theta(t,y(t))
\end{align*}
for all~$y\in W_T$,~$t \in \bar{I}$ and~$\theta \in \mathcal{R}_\eps$. A first approach to obtain an
  optimal feedback law in the form~$\mathcal{F}^\eps_\theta$ can then be found by replacing the open loop control~$u$ in~\eqref{def:openloopproblem} by the closed loop expression~$\mathcal{F}^\eps_\theta(y)$ and minimizing for~$\theta \in \mathcal{R}_\eps$:
\begin{align}\label{eq:aux7}
\min_{y \in W_T, \theta \in \mathcal{R}_\eps  } J(y, F^\eps_\theta(y))+ \frac{\gamma_\eps}{2} \|\theta\|^2_{\mathcal{R}_\eps} \quad s.t. \quad \dot{y}= \mathbf{f}(y)+\mathbf{g}(y) \mathcal{F}^\eps_\theta(y),~y(0)=y_0,
\end{align}
where~$\|\cdot\|_{\mathcal{R}_\eps}$ denotes a Hilbert space norm on~$\mathcal{R}_\eps$,~$\gamma_\eps > 0$ and~$y_0 \in Y_0$ is fixed. This represents the goal of finding a feedback law~$\mathcal{F}^\eps_\theta$ together with a trajectory~$y\in W_T$ which satisfy~$(y, \mathcal{F}^\eps_\theta(y))\in \argmin \eqref{def:openloopproblem} $. However, this approach falls short in  several aspects. First, we cannot hope to recover a solution of the semiglobal optimal feedback control problem for all $y_0\in Y_0$, since the minimization in \eqref{eq:aux7} is associate to a single initial condition only. Secondly it misses to impose properties that would guide   ${\mathcal F}^\eps_\theta(y)$ to  be close to $\mathcal{V}^*$, and it does not exploit the relation between the adjoint state~$p$, see~\eqref{eq:adjointprop}, and the gradient of the value function~$\partial_y \mathcal{V}^*$. Incorporating this information into the problem can, potentially, lead to improved learning results and  improved parameterized feedback laws which behave similarly to~$\mathcal{F}^*$.
These considerations  lead to the second pillar of  our approach, namely a succinct choice of the cost for the learning problem.  For this purpose we use all of $Y_0$ as ~"learning set" for initial conditions.
It is endowed with the  normalized Lebesgue measure~$\mathcal{L}$.
Moreover  we define the augmented objective
\begin{equation}\label{eq:aux2}
\begin{array}l
J_\eps(y,p,\theta)= J(y,\F^\eps_{\theta}(y))\\[1.5ex]
+\int^T_0 \frac{\gamma_1}{2} |V^\eps_{\theta}(t,y(t))-J_t(y,\F^\eps_{\theta}(y))|^2 + \frac{\gamma_2}{2}|\partial_y V^\eps_{\theta}(t,y(t))-p(t)|^2~\mathrm{d}t
\end{array}
\end{equation}
for penalty parameters~$\gamma_1, \gamma_2 \geq 0$. The arguments in $J_t$ are the restriction of the solution  $y$ to the equation in \eqref{eq:aux7}
 and the feedback  $\F^\eps_{\theta}(y)$ to $[t,T]$.
 The additional terms in this new objective functional penalize the violation of the cost and its gradient by means of the approximation based on $V_\theta^\eps$, i.e. they penalize the differences between
$J_t(y,\mathcal{F}^\eps_\theta(y))$ and $V^\eps_\theta(t,y(t)$, as well as
$p(t)$   and~$\partial_y V^\eps_\theta(t,y(t)$.


 \black Given a strictly positive weight function~$\omega \in L^\infty(Y_0);~0<c \leq \omega$ a.e., we thus propose to find a feedback law~$\mathcal{F}^\eps_\theta$ by solving the ensemble control problem
\begin{align}\label{def:approxfeedprop}
 \min_{\substack{\mathbf{y}\in \mathbf{Y}_{ad},\\ \mathcal{F}^\eps_\theta(\mathbf{y}) \in \mathbf{U}_{ad},\\ \mathbf{p}\in \mathcal{C}(Y_0;W_T) \\ \theta \in \mathcal{R}_{\eps}}}\mathcal{J}_\eps(\mathbf{y},\mathbf{p},\theta)\coloneqq \int_{Y_0} \omega(y_0)\, J_\eps(\mathbf{y}(y_0),\mathbf{p}(y_0),\theta)~\de \mathcal{L}(y_0)+ \frac{\gamma_\eps}{2} \|\theta\|^2_{\mathcal{R}_\eps} \tag{$\mathcal{P}_\eps$}
\end{align}
subject to the system of closed loop state~\textit{and} adjoint equations
\begin{align}
\dot{\mathbf{y}}(y_0)=\mathbf{f}(\mathbf{y}(y_0))+\mathbf{g}(\mathbf{y}(y_0))\F^\eps_\theta(\mathbf{y}(y_0)) \label{eq:statepropapprox} \\
-\dot{\mathbf{p}}(y_0)= D\mathbf{f}(\mathbf{y}(y_0))^\top\mathbf{p}(y_0)+ \lbrack  D\mathbf{g}(\mathbf{y}(y_0))^\top\F^\eps_\theta(\mathbf{y}(y_0))\rbrack \mathbf{p}(y_0)+\mathbf{Q}_1^\top \mathbf{Q}_1(\mathbf{y}(y_0)-y_d) \label{eq:adjointpropapprox} \\
\mathbf{y}(y_0)(0)=y_0,~\mathbf{p}(y_0)(T)= Q^\top_2 Q_2(\mathbf{y}(y_0)(T)-y_d^T),~\mathbf{y}(y_0)\in \mathcal{Y}_{ad} \label{eq:constraintprop}
\end{align}
for~$\mathcal{L}$-a.e.~$y_0\in Y_0$. Above~$\mathbf{Y}_{ad} \subset \mathcal{C}(Y_0;W_T)$ and~$\mathbf{U}_{ad} \subset L^2(Y_0;L^2(I;\R^m))$ denote the admissible sets of ensemble state trajectories and admissible  controls. They will  be  specified in  section \ref{sec:existence}.

%
%
%

\section{Examples} \label{sec:examples}
In this section we discuss two particular examples for the parameterized mappings~$V^\eps$: deep residual networks and  piecewise polynomial functions of sufficiently high degree.

\subsection{Residual networks} \label{subsec:residual}
To explain the approximation of the value function  by  residual neural
networks, we  first fix some notation.
Let~$L_\eps\in \N$,~$L_\eps \geq 2$, as well as~$N^{\eps}_{i}\in
\N$,~$i=1, \dots,L_\eps -1$ be given. We set~$N^{\eps}_0=n+1$
and~$N^{\eps}_L=1$. Furthermore define
\begin{align*}
\mathcal{R}_\eps= \bigtimes^{L_\eps-1}_{i=1} \left ( \R^{N^\eps_{i}
\times N^\eps_{i-1}} \times \R^{N^\eps_{i} \times N^\eps_{i-1}} \times
\R^{N^\eps_i} \right ) \times \R^{N^\eps_{L} \times N^\eps_{L-1}}.
\end{align*}
The space~$\mathcal{R}_\eps$ is uniquely determined by
its~\textit{architecture}
\begin{align*}
\text{arch}(\mathcal{R}_\eps)=\left( N^\eps_0, N^\eps_1, \dots ,
N^\eps_L\right)\in \N^{L_\eps+1}.
\end{align*}
A set of parameters~$\theta \in \mathcal{R}_\eps$ given by
\begin{align*}
\theta=\left( W_{11}, W_{12}, b_1,\dots, W_{{L_\eps}}
\right)
\end{align*}
is called a~\textit{neural network} with~$L_\eps$~\textit{layers}.
Moreover let~$\sigma \in \mathcal{C}^4(\R)$ be given and assume that~$\sigma$ is not a polynomial.
The function
\begin{equation}\label{eq:shift}
\begin{array}l
V^\eps_{\theta}(t,y)= \frac{1}{2} |Q_2(y-y_d(T))|^2 \\[1.5ex]
\qquad \qquad +   f^{\sigma}_{L_\eps, \theta}  \circ
f^{\sigma}_{L_\eps-1, \theta} \circ \cdots \circ f^{\sigma}_{1,
\theta}((t,y))-f^{\sigma}_{L_\eps, \theta}  \circ f^{\sigma}_{L_\eps-1,
\theta} \circ \cdots \circ f^{\sigma}_{1, \theta}((T,y))
\end{array}
\end{equation}
for~$(t,y)\in \R \times \R^n$ where
\begin{align*}
f^{\sigma}_{L_\eps, \theta}(x)= W_{L_\eps} x \quad \forall x
\in \R^{N^\eps_{L-1}}
\end{align*}
as well as
\begin{align*}
f^{\sigma}_{i, \theta}(x)= \sigma(W_{i1}x+b_i)+ W_{i2}x \quad \forall x
\in \R^{N^\eps_{i-1}},~i=1, \dots, L_\eps-1
\end{align*}
is called the~\textit{realization} of~$\theta$ with~\textit{activation
function}~$\sigma$.
Here the application of~$\sigma$ is  defined to act componentwise i.e.
given an index~$i \in \{1,\dots,L_\eps-1\}$ and~$x \in \R^{N^\eps_i}$ we set
\begin{align*}
\sigma(x)=(\sigma(x_1), \dots, \sigma(x_{N^\eps_i}))^\top.
\end{align*}
By construction,~$V^\eps_\theta$ satisfies the terminal condition
\begin{align*}
V^\eps_\theta(T,y)=\frac{1}{2} |Q_2(y-y_d(T))|^2 \quad \forall y\in
\R^n.
\end{align*}
Moreover Assumption~\ref{ass:approxsmoothness} is fulfilled as confirmed by the following result.
\begin{theorem} \label{thm:approxbynetworks}
For every~$\eps>0$ there exists
architectures~$\operatorname{\mathcal{R}}_\eps$ and~$\theta_\eps\in
\mathcal{R}_\eps$ such that~$V^\eps \in \mathcal{C}^4(\mathcal{R}_\eps
\times \R \times \R^n)$ and~$V^\eps_{\theta_\eps}$
satisfies~\eqref{eq:approxcapa}.
\end{theorem}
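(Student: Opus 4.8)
The plan is to exploit the \emph{shift structure} built into \eqref{eq:shift}. Writing $\Phi_\theta \coloneqq f^{\sigma}_{L_\eps,\theta}\circ\cdots\circ f^\sigma_{1,\theta}$ for the network realization, we have $V^\eps_\theta(t,y)=\tfrac12|Q_2(y-y_d(T))|^2+\Phi_\theta(t,y)-\Phi_\theta(T,y)$, so the quadratic terminal part is reproduced exactly and the terminal condition $V^\eps_\theta(T,\cdot)=\tfrac12|Q_2(\cdot-y_d(T))|^2$ holds automatically for \emph{every} $\theta$. Introducing the shifted target $W^*(t,y)\coloneqq V^*(t,y)-\tfrac12|Q_2(y-y_d(T))|^2$, the terminal condition $V^*(T,\cdot)=\tfrac12|Q_2(\cdot-y_d(T))|^2$ forces $W^*(T,\cdot)\equiv 0$, and hence $\partial_y W^*(T,\cdot)\equiv 0$ and $\partial_{yy}W^*(T,\cdot)\equiv 0$ as well. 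Thus the whole task \eqref{eq:approxcapa} reduces to approximating $W^*$ by the residual $\Phi_\theta(t,y)-\Phi_\theta(T,y)$ simultaneously in value, $y$-gradient and $y$-Hessian, uniformly on the compact set $K\coloneqq \bar I\times\bar B_{2\widehat M}(0)$.

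The analytic core is a simultaneous $C^2$ universal approximation statement. By Assumption~\ref{ass:feedbacklaw}\,(\textbf{A.2}), together with the explicit smooth terminal data, $V^*$ — and hence $W^*$ — is twice continuously differentiable up to $t=T$, so $W^*\in C^2(K)$ (extending $W^*$ to a $C^2$ function on $\R^{n+1}$ if needed). Because $\sigma\in C^4(\R)$ is \emph{not} a polynomial, single-hidden-layer networks of the form $x\mapsto\sum_j c_j\,\sigma(a_j\cdot x+b_j)$ are dense in $C^2(K)$, i.e.\ they approximate $W^*$ simultaneously in value and in all partial derivatives up to second order; this is the derivative form of the universal approximation theorem for non-polynomial activations (cf.\ the classical results of Pinkus and Hornik). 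Consequently, given $\eps\in(0,\eps_0]$ there is a shallow network $\widetilde\Phi$ with
\begin{align*}
\max_{(t,y)\in K}\Big(|\widetilde\Phi-W^*|+|\partial_y(\widetilde\Phi-W^*)|+\|\partial_{yy}(\widetilde\Phi-W^*)\|\Big)\leq \tfrac{c}{2}\,\eps .
\end{align*}

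Next I would realize $\widetilde\Phi$ inside the prescribed residual architecture. Taking $L_\eps=2$, setting the skip weight $W_{12}=0$, and choosing the hidden width $N^\eps_1$ large enough, the residual realization $\Phi_\theta(x)=W_{2}\big(\sigma(W_{11}x+b_1)+W_{12}x\big)$ collapses to $W_2\,\sigma(W_{11}x+b_1)$, i.e.\ to a shallow network, so that $\Phi_{\theta_\eps}=\widetilde\Phi$ for a suitable $\theta_\eps\in\mathcal{R}_\eps$. It then remains to transfer the estimate to the shift. Since
\begin{align*}
V^\eps_{\theta_\eps}(t,y)-V^*(t,y)=\big(\widetilde\Phi(t,y)-W^*(t,y)\big)-\big(\widetilde\Phi(T,y)-W^*(T,y)\big),
\end{align*}
and using $W^*(T,\cdot)\equiv 0$, $\partial_y W^*(T,\cdot)\equiv 0$, $\partial_{yy}W^*(T,\cdot)\equiv 0$, the left-hand side of \eqref{eq:approxcapa} is bounded, for every $t\in I$, by twice the combined $C^2$-error of $\widetilde\Phi$ on $K$, hence by $c\,\eps$. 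This yields \eqref{eq:approxcapa}.

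Finally, the joint regularity $V^\eps\in\mathcal{C}^4(\mathcal{R}_\eps\times\R\times\R^n)$ follows because each layer $f^\sigma_{i,\theta}(x)=\sigma(W_{i1}x+b_i)+W_{i2}x$ is polynomial (hence smooth) in the weights and involves componentwise $\sigma\in C^4$, so $\Phi_\theta(t,y)$, as a finite composition of such maps, is $C^4$ jointly in $(\theta,t,y)$; the subtracted $\Phi_\theta(T,y)$ and the added quadratic are likewise smooth. The step I expect to be the crux is precisely the simultaneous approximation in $C^2$ — controlling the gradient and Hessian errors, not merely the sup-norm error — for which the non-polynomiality and $C^4$-smoothness of $\sigma$ are exactly the hypotheses that make the derivative-approximation theorem applicable; the shift trick then does double duty, enforcing the terminal condition exactly and converting the plain $C^2$ estimate into the stated bound.
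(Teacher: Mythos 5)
Your proposal is correct and follows essentially the same route as the paper's proof: both invoke the $C^2$ (derivative) form of the universal approximation theorem for a non-polynomial $C^4$ activation, realize the resulting shallow network inside the residual architecture with $L_\eps=2$ and $W_{12}=0$, and exploit the shift construction \eqref{eq:shift} to enforce the terminal condition exactly and bound \eqref{eq:approxcapa} by twice the $C^2$ approximation error. The only (immaterial) difference is that you approximate the shifted target $W^*=V^*-\tfrac12|Q_2(\cdot-y_d(T))|^2$ while the paper approximates $V^*$ itself; the cancellation at $t=T$ works identically in either formulation.
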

\begin{proof}
Let us set $h(t,y)= V^*(t,y)$ for $(t,y) \in I \times \bar
B_{2\widehat M}(0)$. Then $h$ is twice continuously differentiable on $I
\times \bar B_{2\widehat M}(0)$ and $h(T,y)= \frac{1}{2} |Q_2(y-y^T_d)|^2$. A consequence of the
universal approximation theorem implies that for all $\eps >0 $
there exists $\tilde h_\eps \in \mathcal{M}_{\text{net}}$ such that
\begin{equation}\label{eq:aux10}
\|h-\tilde h_\eps \|_{C^2(I\times \bar B_{2\widehat M}(0))} \le
\frac{\varepsilon}{2},
\end{equation}
where $\mathcal{M}_{\text{net}}= \text{span} \{\sigma( \tilde w\cdot x+ \tilde b ):
\tilde w \in \R^{n+1},\, \tilde b\in\R \}$, see eg \cite[Theorem 4.1]{P99},
\cite{hornik91}. Let us observe that $\tilde h_\eps$ can be
expressed as a residual network. Indeed, since
$$
\tilde h_\eps = \sum_{i=1}^M \tilde c_i \sigma(\tilde w_i \cdot x
+\tilde b_i)
$$
for some $M\in \mathbb{N}$, $\tilde w_i \in \R^{n+1}, \tilde b_i, \tilde
c_i \in \R$, choosing $L_\epsilon=2,  W_{11}\in \R^{M\times (n+1)}$ with
rows $\{\tilde w_i\}_{i=1}^M$,
\begin{equation*}
b_1=\text{col}(\tilde b_1,\dots, \tilde b_M), \,W_2=(\tilde
c_1,\dots, \tilde c_M), \, W_{12}=0,
\end{equation*}
we have $\tilde h_\eps = f^\sigma_{2,\theta}\circ
f^\sigma_{1,\theta}$. Moreover, $\tilde h_\eps\in
C^4(I\times \bar B_{2\widehat M})$. Following \eqref{eq:shift} we define
\begin{align*}
V^\eps_{\theta_\eps}(t,y)= \frac{1}{2} |Q_2(y-y^T_d)|^2 + \tilde h_\eps(t,y) -
\tilde h_\eps(T,y)\in C^4(I\times \bar B_{2\widehat M}).
\end{align*}
and estimate
\begin{align*}
\| V^\eps_{\theta_\epsilon}(t,y) - V^*(t,y)\|_{C^2} &=
\| \tilde h_{\eps}(t,y) -  \tilde h_{\eps}(T,y) + V^*(T,y) -
V^*(t,y)\|_{C^2} \\
& \leq 2 \|h-\tilde{h}_\eps\|_{C^2} \leq \eps,
\end{align*}
where all norms are taken over $I\times \bar B_{2\widehat M}(0)$. This
ends the proof.
\end{proof}

\subsection{Piecewise polynomials} \label{subsec:polynomials}
Fix~$\eps_0>0$, and let $\eps\in (0, \eps_0]$ be arbitrarily fixed. Throughout this subsection we assume ~$(\mathbf{A.2})$ and in particular we shall make use of the global Lipschitz continuity of $D^2 V^*$ on $\bar K=  \bar I \times \bar B_{2\widehat M}(0)$. Since $\bar K$ is compact and hence totally bounded,  there exist $n_\eps \in \mathbb{N}$ and $\{(\bar{t}_i,\bar{y}^i_0)\}_{i=1}^{n_\eps}\in \R^{n+1}$ such that
\begin{align*}
\bar{K} \subset \bigcup^{n_\eps}_{i=1} K_i \quad \text{where} \quad K_i= B_{\eps}( (t_i,\bar{y}^i_0)) .
\end{align*}
Note that we do not highlight the dependence of $(t_i,\bar{y}^i_0)$ and $K_i$ on $\eps$.
For each~$i$ define the parametrized polynomial
\begin{align*}
V^\eps_i(A,b,c,t,y)= (t-\bar{t}_i, y-\bar{y}^i_0)^\top A(t-\bar{t}_i, y-\bar{y}^i_0)+ b^\top (t-\bar{t}_i, y-\bar{y}^i_0)+c
\end{align*}
with
\begin{align*}
(A,b,c,t,y) \in \operatorname{Sym}(n+1) \times \R^{n+1} \times \R \times \R^{n+1},
\end{align*}
where ${Sym}(n)$ denotes the space of real symmetric  $ n \times n$ matrices.  Note that~$V^\eps_i $ is infinitely many times differentiable in all of its arguments.

For each $\eps\in (0,\eps_0]$ we define a special partition of unity $\{\varphi_i\}_{i=1}^{n_\eps}$ subordinate to~$K_i$ with
$\varphi_i \colon \R \times \R^n \to [0,1]$, satisfying ~$\mathcal{C}^4$ and
\begin{equation}\label{eq:kk10}
\left\{
\begin{array}{c}
\supp \varphi_i = \bar K_i, \qquad ~\sum^{n_\eps}_{i=1} \varphi_i(t,y)=1, \, \forall (t,y) \in \bar K, \\[1.7ex]
\|D^{j}\varphi_i\|_{C(\bar K_i\cap \bar K)} =\bar \mu \eps^{-j}, \forall i=1,\dots, n_\eps, \; \text{ and } j\in\{1,2\},\\[1.7ex]
\text{card } \{i: \varphi_i(t,y) \neq 0 \} \le \mathfrak{m} \quad  \forall (t,y)\in \bar K, \eps\in (0,\eps_0],
\end{array}
\right.
\end{equation}
with $\bar \mu$ and $\mathfrak{m}$  positive constants independent of $i, (t,y)\in \bar K, \eps\in (0,\eps_0]$.
Finally we define
\begin{align*}
\mathcal{R}_\eps= \bigtimes^{n_\eps}_{i=1} ( \operatorname{Sym}(n+1) \times \R^{n+1} \times \R ),
\end{align*}
and introduce the family of parameterized functions on $\R^{n+1}$ by
\begin{align} \label{eq:taylorfunctionchoice}
V^\eps_\theta (t,y)= \frac{1}{2} |Q_2(y-y_d(T))|^2+ \sum^{n_\eps}_{i=1} \varphi_i(t,y)\left( V^\eps_i(A_i,b_i,c_i,t,y)-V^\eps_i(A_i,b_i,c_i,T,y) \right)
\end{align}
for~$\theta=(A_1,b_1,c_1,\dots, A_{n_\eps},b_{n_\eps},c_{n_\eps})\in\mathcal{R}\eps$. Obviously we have~$V^\eps_{\cdot}(\cdot) \in\mathcal{C}^4(\mathcal{R}_\eps \times \R \times \R^n)$ and
\begin{align*}
V^\eps_\theta (T,y)=\frac{1}{2} |Q_2(y-y^T_d)|^2 \quad \forall y \in \R^n.
\end{align*}
Thus the final time condition in the HJB equation is fulfilled. Next we show that { $V^\eps_{\theta}$  satisfies the approximation property in Assumption~\ref{ass:approxsmoothness} for the particular choice of
\begin{equation} \label{eq:taylorparachoice}
\begin{array}{ll}
\theta_\eps = &\big(\partial_{yy} V^*(\bar{t}_1,\bar{y}^1_0), \partial_y V^*(\bar{t}_1,\bar{y}^1_0),V^*(\bar{t}_1,\bar{y}^1_0), \\[1.4ex]
&\dots, \partial_{yy} V^*(\bar{t}_{n_\eps},\bar{y}^{n_\eps}_0), \partial_y V^*(\bar{t}_{n_\eps},\bar{y}^{n_\eps}_0),V^*(\bar{t}_{n_\eps},\bar{y}^{n_\eps}_0)\big),
\end{array}
\end{equation}
i.e. $V^\eps_i$ in \eqref{eq:taylorfunctionchoice} are chosen with
\begin{equation}\label{eq:kk11}
(\bar{A}_i,\bar{b}_i,\bar{c}_i)=(\partial_{yy} V^*(\bar{t}_i,\bar{y}^i_0), \partial_y V^*(\bar{t}_i,\bar{y}^i_0),V^*(\bar{t}_i,\bar{y}^i_0)) \quad i=1,\dots,n_\eps.
\end{equation}

\begin{theorem}
\label{thm:Taylor}
Let~$V^\eps$ and~$\theta_\eps$ be chosen according to~\eqref{eq:taylorfunctionchoice} and~\eqref{eq:taylorparachoice}, respectively, and suppose that ~$(\mathbf{A.2})$ and \eqref{eq:kk10} are satisfied. Then Assumption~\ref{ass:approxsmoothness} holds.
\end{theorem}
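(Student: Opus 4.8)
The plan is to reduce \eqref{eq:approxcapa} to the single Hölder-type bound $\|V^\eps_{\theta_\eps}-V^*\|_{C^2(\bar K)}\le c\,\eps$ on $\bar K=\bar I\times\bar B_{2\widehat M}(0)$, since the left-hand side of \eqref{eq:approxcapa} is exactly the sum of the sup-norms over $\bar K$ of the value, the $y$-gradient, and the $y$-Hessian of $V^\eps_{\theta_\eps}-V^*$, and is therefore dominated by the full $C^2(\bar K)$ norm. To exploit the shift built into \eqref{eq:taylorfunctionchoice}, I would introduce the plain partition-of-unity interpolant $\Pi^\eps(s,y):=\sum_{i=1}^{n_\eps}\varphi_i(s,y)\,V^\eps_i(\bar A_i,\bar b_i,\bar c_i,s,y)$, so that the construction reads $V^\eps_{\theta_\eps}(t,y)=\tfrac12|Q_2(y-y_d(T))|^2+\Pi^\eps(t,y)-\Pi^\eps(T,y)$. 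Using $\sum_i\varphi_i\equiv 1$ on $\bar K$ together with the identity $\tfrac12|Q_2(y-y_d(T))|^2=V^*(T,y)$, this telescopes into $V^\eps_{\theta_\eps}-V^*=[\Pi^\eps(t,y)-V^*(t,y)]-[\Pi^\eps(T,y)-V^*(T,y)]$, which is the polynomial analogue of the cancellation already used in the residual-network proof of Theorem~\ref{thm:approxbynetworks}. Everything thus reduces to estimating $\Pi^\eps-V^*$ in $C^2(\bar K)$.

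The next step is purely local. By the choice \eqref{eq:kk11}, on each patch $K_i\cap\bar K$ the polynomial $V^\eps_i(\bar A_i,\bar b_i,\bar c_i,\cdot)$ is the second-order Taylor polynomial of $V^*$ about the center $(\bar t_i,\bar y_0^i)$. Since $(\mathbf{A.2})$ provides the global Lipschitz continuity of $D^2V^*$ on $\bar K$, Taylor's theorem with integral remainder gives, for every $(t,y)\in K_i\cap\bar K$ and every multi-index $\alpha$ with $|\alpha|\le 2$, a bound of the form $|D^\alpha(V^\eps_i-V^*)(t,y)|\le C\,\eps^{\,3-|\alpha|}$, with $C$ depending only on the Lipschitz constant of $D^2V^*$. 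In words: the value error is $O(\eps^3)$, the gradient error $O(\eps^2)$, and the Hessian error $O(\eps)$, uniformly in $i$ and in $\eps\in(0,\eps_0]$.

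I would then assemble these bounds. Writing $\Pi^\eps-V^*=\sum_i\varphi_i\,(V^\eps_i-V^*)$ and differentiating up to second order by the Leibniz rule produces three families of terms: those in which both derivatives fall on the remainder $V^\eps_i-V^*$, those in which one derivative falls on $\varphi_i$, and those in which two derivatives fall on $\varphi_i$. The decisive mechanism is the matching of scales: a derivative landing on $\varphi_i$ costs a factor $\bar\mu\,\eps^{-1}$ or $\bar\mu\,\eps^{-2}$ by \eqref{eq:kk10}, but it always multiplies a remainder factor that is smaller by exactly the compensating power of $\eps$, so each of the three families contributes at the order $O(\eps)$ (indeed $O(\eps^3)$ and $O(\eps^2)$ for the value and gradient); the uniform overlap bound $\operatorname{card}\{i:\varphi_i\neq 0\}\le\mathfrak{m}$ keeps the number of contributing summands bounded independently of $\eps$. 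Collecting the cases yields $\|\Pi^\eps-V^*\|_{C^2(\bar K)}\le c\,\eps$, and the telescoping of the first paragraph then propagates this to $V^\eps_{\theta_\eps}-V^*$, because both brackets are evaluated at points of $\bar K$ and the second depends on $y$ only.

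I expect the balancing of the partition-of-unity derivatives against the Taylor remainder to be the main obstacle, since it is precisely here that the scaling $\|D^j\varphi_i\|=\bar\mu\,\eps^{-j}$ of \eqref{eq:kk10} must be played off against the local orders $\eps^{3-|\alpha|}$; the second-order bookkeeping (two $\eps^{-1}$ factors, or one $\eps^{-2}$ factor, against an $O(\eps^3)$ value error) is the tightest and the most error-prone. A secondary point requiring care is the terminal shift: the subtracted quantity must be the \emph{full interpolant} $\Pi^\eps(T,\cdot)$ on the slice $t=T$, so that the telescoped remainder $\Pi^\eps(T,y)-V^*(T,y)$ is again governed by the global $C^2(\bar K)$ estimate rather than by a local Taylor remainder extrapolated far in time from a patch center; this is legitimate because $(T,y)\in\bar K$ whenever $(t,y)\in\bar K$, and because $V^*(T,\cdot)=\tfrac12|Q_2(\cdot-y_d(T))|^2$ is reproduced exactly by construction, so the hard terminal condition of Assumption~\ref{ass:approxsmoothness} is met without degrading the rate. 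Combining the $C^2(\bar K)$ estimate with the reduction of the first paragraph delivers \eqref{eq:approxcapa} and hence Assumption~\ref{ass:approxsmoothness}.
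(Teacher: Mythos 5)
Your proposal is correct and follows essentially the same route as the paper's proof: the same local Taylor bounds $\|V^*-V^\eps_i\|_{C^{2-j}(\bar K_i\cap\bar K)}\le \bar c\,\eps^{j+1}$, assembled through the partition of unity by playing the scalings $\|D^j\varphi_i\|\le\bar\mu\,\eps^{-j}$ and the overlap bound $\mathfrak{m}$ from \eqref{eq:kk10} against the remainder orders, with the terminal condition handled by evaluation on the slice $t=T$. One remark: your insistence that the subtracted term be the full interpolant $\Pi^\eps(T,\cdot)$ (weights $\varphi_i(T,y)$, rather than the weights $\varphi_i(t,y)$ that \eqref{eq:taylorfunctionchoice} literally prescribes) is exactly the reading used in the paper's own error decomposition, so the ``secondary point requiring care'' you flag coincides with, rather than deviates from, the published argument.
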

\begin{proof}
We already argued that~$V_\theta^\eps$ has the desired regularity. It remains to prove the required approximation capabilities.
For abbreviation set~$V^\eps_i(t,y)=V^\eps_i({\bar A_i},{\bar b_i},{\bar c_i},t,y)$, with $({\bar A_i},{\bar b_i},{\bar c_i})$ as in \eqref{eq:kk11}.

Since~$V^\eps_i$ is the second order Taylor expansion of~$V$ at~$(\bar{t}_i,\bar{y}^i_0)$  we conclude that
\begin{equation}\label{eq:kk12}
\|V^*-V^\eps_i\|_{C^{2-j}(\bar K_i \cap\bar K)} \leq \bar c\eps^{j+1}, \quad \text{for } j\in{0,1,2,}
\end{equation}
for some~$\bar c>0$ depending on the global Lipschitz constant of $V$ on $\bar K$, and independent of~$\eps \in (0,\eps_0]$ and~$i$. Still recall that the sets $K_i$ depend on $\eps$.
To estimate $V^*(t,y) -V^\eps_\theta(t,y)$ we recall that $V^*(T,y)=\frac{1}{2} |Q_2(y-y^T_d)|^2$, and express $V^*(t,y)$ as  $V^*(t,y)=  V^*(T,y)+ V^*(t,y)- V^*(T,y)$. This leads to
\begin{equation*}
\begin{array} l
V^*(t,y)-V^\eps_\theta(t,y) \\[1.5ex]
\;= \sum_{i\in \{1,\dots,n_\eps\}} \varphi_i(t,y) (V^*(t,y)-V^\eps_i(t,y)) +\sum_{i\in \{1,\dots,n_\eps\}}\varphi_i(T,y)\ (V^*(T,y)-V^\eps_i(T,y)),
\end{array}
\end{equation*}
for $(t,y) \in \bar K$.
From \eqref{eq:kk10} and \eqref{eq:kk10} we deduce that $\|V(t,y) -V^\eps_\theta(t,y)\|_{C(\bar K)}\le 2 \bar c \eps^3$.

For the gradient with respect to~$y$ we proceed similarly.  Fixing ~$(t,y)\in \bar K $ we estimate
\begin{align*}
|\partial_y V^*(t,y)&-\partial_y V^\eps_{\theta_\eps}(t,y)| \leq D_1+D_2
\end{align*}
where
\begin{align*}
D_1 &=\sum_{i \in \{1,\dots,n_\eps\}} \left \lbrack \varphi_i(t,y) |\partial_y V^*(t,y)-\partial_y V^\eps_i(t,y)|+ | V^\eps_i(t,y)-V^*(t,y)| |\partial_y \varphi_i(t,y)| \right \rbrack \\
D_2 &=\sum_{i \in \{1,\dots,n_\eps\}} \left \lbrack \varphi_i(T,y) |\partial_y V^*(T,y)-\partial_y V^\eps_i(T,y)|+ | V^\eps_i(T,y)-V^*(T,y)| |\partial_y \varphi_i(t,y)| \right \rbrack.
\end{align*}
By \eqref{eq:kk12} with $j=1$ the first terms in $D_1$ and $D_2$ can be estimated by $\bar c \eps ^2$.
Using  \eqref{eq:kk10} and \eqref{eq:kk12} the second terms in $D_1$ and $D_2$ can be bounded by  $\mathfrak{m} \bar \mu \eps ^2$. Combining these estimate we arrive at
\begin{equation*}
\|\partial_y  V^*(t,y) -\partial_y  V^\eps_\theta(t,y)\|_{C(\bar K)}\le 2 \eps^2( \bar c +  \mathfrak{m}\, \bar \mu).
\end{equation*}
In an analogous manner one can obtain a bound of the order $O(\eps)$ on the difference of the Hessians of $V$ and $V^\eps_\theta$. This finishes the proof.
\end{proof}

In Appendix \ref{app1} it is shown  how standard mollifiers can be used so that \eqref{eq:kk10} is satisfied. This requires some extra attention due to the required  bounds on the derivatives of $\varphi_i$.

\section{Existence of minimizers to~\eqref{def:approxfeedprop}} \label{sec:existence}
This section is devoted to proving the existence of minimizing triples to~\eqref{def:approxfeedprop}. Throughout this section $c$ will denote a generic constant independent of $\eps>0$ and $y_0\in Y_0$.

\subsection{Existence of admissible points} \label{subsec:exstofmin}
Recall from  Assumption~\ref{ass:feedbacklaw} \tcb and Remark \ref{rem3}  that the optimal ensemble state~$\mathbf{y}^* \in \mathcal{C}(Y_0;W_T)$ satisfies~$\|{\mathbf{y}^*}\|_{{\mathcal C}}\leq M_{Y_0}$. Accordingly we define the set of admissible states and admissible controls as
\begin{align*}
\mathbf{Y}_{ad} = \left\{\, \mathbf{y} \in \mathcal{C}(Y_0;W_T)\;|\;\Cbochnorm{\mathbf{y}} \leq 2 M_{Y_0}\, \right\}, \quad  \mathbf{U}_{ad} \coloneqq L^2(Y_0;L^2(I;\R^m)).
\end{align*}
We also recall the definition  $\mathcal{Y}_{ad}$ in \eqref{eq:kk13}.

To prove the existence of minimizers to~\eqref{def:approxfeedprop} we first argue that the admissible set
\begin{align} \label{def:admissibleset}
\mathcal{N}^{\eps}_{ad}= \left\{(\mathbf{y},\mathbf{p},\theta)\!\in\! \mathbf{Y}_{ad}\!\times\!\mathcal{C}(Y_0;W_T) \!\times\! \mathcal{R}_{\eps}|(\mathbf{y},\mathbf{p},\theta)~\text{satisfies}\!~\eqref{eq:statepropapprox}\!
-\!\eqref{eq:constraintprop}, \F^\eps_{\theta}(\mathbf{y})\!\in\! \mathbf{U}_{ad}\right\}
\end{align}
is nonempty for~$\eps$ small enough.
For this purpose  consider the family~$\theta_\eps \in \mathcal{R}_{\eps}$,~$0< \eps \leq \eps_0$, from Assumption~\ref{ass:approxsmoothness} as well as the associated closed loop system of state and adjoint equations
\begin{align} \label{eq:neuralnetstate}
\dot{y}_\eps&= \mathbf{f}(y_\epsilon)+ \mathbf{g}(y_\eps)
\F^\eps_{\theta_\epsilon} (y_\epsilon), \\
-\dot{p}_\eps&= D\mathbf{f}(y_\eps)^\top p_\eps + \lbrack  D\mathbf{g}(y_\eps)^\top\F^\eps_{\theta_\eps}(y_\eps)\rbrack p_\eps+\mathbf{Q}_1^\top \mathbf{Q}_1(y_\eps-y_d), \label{eq:neuraladjoint}
\end{align}
subject to the following initial and terminal conditions
\begin{align*}
y_\epsilon(0)=y_0,~p_\eps(T)= Q^\top_2 Q_2(y_\eps(T)-y_d^T),
\end{align*}
for every~$y_0 \in Y_0$.
We first prove the following approximation result.
\begin{theorem} \label{thm:existenceneuralnetwork2}
Let Assumptions~\ref{ass:feedbacklaw} and~\ref{ass:approxsmoothness} hold. There exists a constant $c$ such that for all~$\eps>0$ small enough and for all $y_0\in Y_0$
the system \eqref{eq:neuralnetstate} and~\eqref{eq:neuraladjoint}
admits  unique solutions~$y_\eps=\mathbf{y}_\eps (y_0) \in \mathcal{Y}_{ad}$ and~$p_\eps=\mathbf{p}_\eps (y_0) \in W_T$. Furthermore ~$\mathbf{y}_\eps \in \mathcal{C}^1 (Y_0;W_T)$,~$\mathbf{p}_\eps \in \mathcal{C} (Y_0;W_T)$, and $\F^*(\mathbf{y}^*) \in \mathcal{C} (Y_0;L^2(I;\R^m))$  hold and
\begin{align*}
\|\mathbf{y}_\eps-\mathbf{y}^*\|_{\mathcal{C}^1(Y_0;W_T)}+
 \|\mathbf{p}_\eps-\mathbf{p}^*\|_{\mathcal{C}(Y_0;W_T)}+ \|\F^\eps_{\theta_\eps}(\mathbf{y}_\eps)-\F^*(\mathbf{y}^*)\|_{\mathcal{C}(Y_0;L^2(I;\R^m))} \leq c \eps.
\end{align*}
In particular,~$(\mathbf{y}_\eps,\mathbf{p}_\eps,\theta_\eps)\in \mathcal{N}^\eps_{ad}$ for all~$\eps>0$ small enough.
\end{theorem}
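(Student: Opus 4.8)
The plan is to treat the closed-loop system driven by the fixed feedback $\F^\eps_{\theta_\eps}$ as an $O(\eps)$ perturbation of the exact closed-loop system driven by $\F^*$, and to transfer the well-posedness of the latter (guaranteed by (\textbf{A.3}) and Remark \ref{rem3}) to the former by a quantitative inverse function argument. First I would record the basic feedback approximation estimate: for $y \in \mathcal{Y}_{ad}$ the embedding bound \eqref{eq:aux1} yields $|y(t)| \le 2\widehat M$ for all $t$, so \eqref{eq:approxcapa} applies along the entire trajectory. Since $g$ is bounded on bounded sets by (\textbf{A.1}) and both feedback laws arise from $V^\eps_{\theta_\eps}$ and $V^*$ through the same formula $-\tfrac{1}{\beta}\mathbf{g}(\cdot)^\top\partial_y(\cdot)$, the gradient term in \eqref{eq:approxcapa} gives $\|\F^\eps_{\theta_\eps}(y) - \F^*(y)\|_{L^2(I;\R^m)} \le c\eps$ uniformly over $y \in \mathcal{Y}_{ad}$; differentiating once more and invoking the Hessian term in \eqref{eq:approxcapa} yields the analogous bound $\|D\F^\eps_{\theta_\eps}(y) - D\F^*(y)\| \le c\eps$ on $\mathcal{Y}_{ad}$.

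Next, for each fixed $y_0 \in Y_0$ I would introduce the residual map $e_\eps(y) = \dot y - \mathbf{f}(y) - \mathbf{g}(y)\F^\eps_{\theta_\eps}(y)$ on the affine space $\{y \in W_T : y(0) = y_0\}$, together with its exact counterpart $e(y) = \dot y - \mathbf{f}(y) - \mathbf{g}(y)\F^*(y)$, which satisfies $e(\mathbf{y}^*(y_0)) = 0$ by (\textbf{A.3}). The linearisation $De(\mathbf{y}^*(y_0))$ is precisely the linear ODE operator of Remark \ref{rem3}; since that ODE is uniquely solvable for every right-hand side and every initial datum, $De(\mathbf{y}^*(y_0))$ is a bounded bijection onto $L^2(I;\R^n)$ and hence, by the open mapping theorem, a Banach-space isomorphism. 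Using the continuity of $y_0 \mapsto \mathbf{y}^*(y_0)$ and the compactness of $Y_0$, I would make the norm of the inverse uniform in $y_0$. The estimates from the first paragraph then give $\|e_\eps(\mathbf{y}^*)\| = \|\mathbf{g}(\mathbf{y}^*)(\F^\eps_{\theta_\eps}(\mathbf{y}^*) - \F^*(\mathbf{y}^*))\| \le c\eps$ and $\|De_\eps(\mathbf{y}^*) - De(\mathbf{y}^*)\| \le c\eps$, so for small $\eps$ a Neumann series makes $De_\eps(\mathbf{y}^*)$ invertible with uniformly bounded inverse. A Newton--Kantorovich (quantitative inverse function) argument, using the local Lipschitz continuity of $De_\eps$ from (\textbf{A.1}) and Assumption \ref{ass:approxsmoothness}, then produces a unique zero $\mathbf{y}_\eps(y_0)$ of $e_\eps$ in a ball of radius $c\eps$ about $\mathbf{y}^*(y_0)$. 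Since $\wnorm{\mathbf{y}^*(y_0)} \le M_{Y_0}$, this places $\mathbf{y}_\eps(y_0)$ in $\mathcal{Y}_{ad}$ for $\eps$ small and gives $\|\mathbf{y}_\eps - \mathbf{y}^*\|_{\mathcal{C}(Y_0;W_T)} \le c\eps$.

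With $\mathbf{y}_\eps$ in hand the adjoint estimate is routine: \eqref{eq:neuraladjoint} is a linear terminal-value ODE for $p_\eps$ whose coefficients $D\mathbf{f}(y_\eps)^\top$ and $D\mathbf{g}(y_\eps)^\top\F^\eps_{\theta_\eps}(y_\eps)$, data, and terminal value differ from those of the exact adjoint \eqref{eq:adjointprop} only by $O(\eps)$ (combining $\|\mathbf{y}_\eps - \mathbf{y}^*\| \le c\eps$, the feedback estimate, and Lipschitz continuity of $\F^*$, $D\mathbf{f}$, $D\mathbf{g}$). A Grönwall estimate for the well-posed linear adjoint system yields $\|\mathbf{p}_\eps - \mathbf{p}^*\|_{\mathcal{C}(Y_0;W_T)} \le c\eps$, and combining the two bounds gives $\|\F^\eps_{\theta_\eps}(\mathbf{y}_\eps) - \F^*(\mathbf{y}^*)\|_{\mathcal{C}(Y_0;L^2(I;\R^m))} \le c\eps$. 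The regularity claims $\mathbf{y}_\eps \in \mathcal{C}^1(Y_0;W_T)$ and $\mathbf{p}_\eps \in \mathcal{C}(Y_0;W_T)$ follow by the implicit function theorem applied to $(y,y_0) \mapsto (e_\eps(y),\, y(0) - y_0)$ and to the linear adjoint map, as in Remark \ref{rem3}. To upgrade the state bound to the full $\mathcal{C}^1(Y_0;W_T)$ norm I would differentiate both closed-loop systems in $y_0$ and apply the same perturbation argument to the resulting linear sensitivity equations, whose coefficients are again $O(\eps)$-close.

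The main obstacle, I expect, is the \emph{uniformity in $y_0 \in Y_0$} of the inverse bound for $De(\mathbf{y}^*(y_0))$, and hence of the Newton--Kantorovich radius: Remark \ref{rem3} supplies only pointwise invertibility, and one must combine continuous dependence of $\mathbf{y}^*$ on $y_0$, continuity of the coefficient operators, and compactness of $Y_0$ to extract a single constant $c$ valid for all $y_0$ and all small $\eps$ simultaneously. The remaining ingredients are perturbation of invertible operators plus Grönwall estimates.
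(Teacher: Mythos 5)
Your architecture is viable and, in its second half, coincides with the paper's own proof: the adjoint bound via Gr\"onwall for the linear terminal-value problem with $O(\eps)$-perturbed coefficients is Proposition~\ref{prop:solvofadjoint}, and the upgrade to $\mathcal{C}^1(Y_0;W_T)$ by differentiating in $y_0$ and comparing the two linearized operators is exactly Proposition~\ref{thm:aprioriW12} (there the operators are called $T_*(y_0)$ and $T_\eps(y_0)$, the comparison is \eqref{eq:kk14} plus the resolvent identity, and uniformity in $y_0$ comes from Gr\"onwall with coefficients bounded on $\mathcal{Y}_{ad}$, not from compactness). For the existence step the paper goes differently: instead of Newton--Kantorovich on the residual $e_\eps$, it runs a Banach fixed-point iteration with the map $\mathcal{Z}$ that sends $y$ to the solution of the \emph{exact} closed-loop system perturbed by $\mathbf{g}(y)(\F^\eps_{\theta_\eps}(y)-\F^*(y))$ (well-posed by the implicit-function-theorem results of Appendix~\ref{app:perturbation}), and shows $\mathcal{Z}$ is a contraction with constant $c\eps$ via Lemma~\ref{lem:lipschitzoffeed}. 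Your chord iteration solves frozen linear problems where the paper solves nonlinear perturbed ones; both can deliver existence and the $O(\eps)$ estimate.

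The genuine soft spot is your justification of the Newton--Kantorovich step: you appeal to ``local Lipschitz continuity of $De_\eps$ from (\textbf{A.1}) and Assumption~\ref{ass:approxsmoothness}''. For the conclusion you need that Lipschitz constant --- hence the Newton radius and the final constant $c$ --- to be \emph{uniform in $\eps$}, and Assumption~\ref{ass:approxsmoothness} does not supply this: it gives $\mathcal{C}^4$ smoothness for each fixed $\eps$ and sup-norm $O(\eps)$ closeness of $V^\eps_{\theta_\eps}$ to $V^*$ up to second $y$-derivatives, but no $\eps$-uniform control on the modulus of continuity of $\partial_{yy}V^\eps_{\theta_\eps}$, i.e.\ of $D\F^\eps_{\theta_\eps}$ (for the piecewise-polynomial construction of Section~\ref{subsec:polynomials}, where cut-off derivatives scale like $\eps^{-j}$, such uniformity is not evident; for networks it is not assumed at all). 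If the Lipschitz constant of $De_\eps$ degenerates as $\eps\to 0$, your Newton radius can shrink faster than the residual and the scheme fails to produce an $\eps$-independent $c$. The repair is the decomposition $De_\eps = De + (De_\eps - De)$: $De$ is Lipschitz on $\mathcal{Y}_{ad}$ with an $\eps$-independent constant by (\textbf{A.1})--(\textbf{A.2}), while $\sup_{y\in\mathcal{Y}_{ad}}\|De_\eps(y) - De(y)\| \leq c\eps$ follows from your own first-paragraph estimates; this suffices to run the chord method on a ball of radius $C\eps$. This is precisely the structural device of the paper's Lemma~\ref{lem:lipschitzoffeed}, which estimates Lipschitz constants only for the \emph{difference} $\F^*-\F^\eps_{\theta_\eps}$, never for $\F^\eps_{\theta_\eps}$ alone. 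A second, smaller omission: Newton--Kantorovich yields uniqueness only in the ball $B_{c\eps}(\mathbf{y}^*(y_0))$, whereas the theorem asserts uniqueness in all of $\mathcal{Y}_{ad}$; for fixed $\eps$ this can be recovered from classical Picard--Lindel\"of uniqueness (the closed-loop vector field is locally Lipschitz since $V^\eps_{\theta_\eps}\in\mathcal{C}^4$), or, as in the paper, from Proposition~\ref{thm:existspert}, but it needs to be said.
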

In order to prove this we require several auxiliary results.
\begin{lemma} \label{lem:lipschitzoffeed}
There exists a constant $c$ such that for all $\eps$ small enough there holds
\begin{align*}
\|(\F^*(y_1)-\mathcal{F}^\eps_{\theta_\eps}(y_1))-(\F^*(y_2)-\F^\eps_{\theta_\eps}(y_2))\|_{L^2(I;\R^m)}\leq c\eps \wnorm{y_1-y_2}, \quad \forall y_1,y_2 \in \mathcal{Y}_{ad}.
\end{align*}
\end{lemma}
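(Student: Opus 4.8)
The plan is to exploit that the entire $\eps$-smallness is carried by the difference of the two value functions, while the control operator $\mathbf{g}$ contributes only bounded and Lipschitz factors with constants independent of $\eps$. Write $W \coloneqq V^* - V^\eps_{\theta_\eps}$ and let $\partial_y\mathcal{W}$ denote the Nemitsky operator induced by $\partial_y W$, i.e. $\partial_y\mathcal{W}(y)(t)=\partial_y W(t,y(t))$. From the definition of $\F^*$ in \eqref{def:optfeddnemitsk} and the analogous definition of $\mathcal{F}^\eps_{\theta_\eps}$ one has, for every $y\in\mathcal{Y}_{ad}$,
\begin{align*}
\F^*(y)-\mathcal{F}^\eps_{\theta_\eps}(y)=-\frac{1}{\beta}\,\mathbf{g}(y)^\top\partial_y\mathcal{W}(y),
\end{align*}
so that the quantity to be estimated equals $-\tfrac1\beta$ times $\mathbf{g}(y_1)^\top\partial_y\mathcal{W}(y_1)-\mathbf{g}(y_2)^\top\partial_y\mathcal{W}(y_2)$. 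First I would telescope this difference as
\begin{align*}
\mathbf{g}(y_1)^\top\bigl[\partial_y\mathcal{W}(y_1)-\partial_y\mathcal{W}(y_2)\bigr]+\bigl[\mathbf{g}(y_1)^\top-\mathbf{g}(y_2)^\top\bigr]\partial_y\mathcal{W}(y_2)
\end{align*}
and bound the two summands separately in $L^2(I;\R^m)$.

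For the first summand, $\mathbf{g}$ is bounded on $\mathcal{Y}_{ad}$ by (\textbf{A.1}), so it suffices to estimate $\partial_y\mathcal{W}(y_1)-\partial_y\mathcal{W}(y_2)$ in $L^2(I;\R^n)$. Since $y_1,y_2\in\mathcal{Y}_{ad}$, the embedding $W_T\hookrightarrow\mathcal{C}(I;\R^n)$ together with \eqref{eq:aux1} yields $|y_i(t)|\le 2\widehat M$ for all $t\in I$, so the points $(t,y_i(t))$ lie in the set on which \eqref{eq:approxcapa} holds; in particular $\partial_y W$ is Lipschitz in its second argument with constant $\max\|\partial_{yy}W\|\le c\eps$. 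Hence pointwise $|\partial_y W(t,y_1(t))-\partial_y W(t,y_2(t))|\le c\eps\,|y_1(t)-y_2(t)|$, and integrating in $t$ produces a bound $c\eps\,\|y_1-y_2\|_{L^2(I;\R^n)}\le c\eps\,\wnorm{y_1-y_2}$. For the second summand I would invoke the Lipschitz continuity of $\mathbf{g}$ on $\mathcal{Y}_{ad}$, again from (\textbf{A.1}), combined with the uniform bound $|\partial_y W(t,y_2(t))|\le c\eps$ supplied directly by \eqref{eq:approxcapa}; this gives the pointwise estimate $\le c\eps\,|y_1(t)-y_2(t)|$ and, after integration, once more a bound of the form $c\eps\,\wnorm{y_1-y_2}$. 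Adding the two contributions and dividing by $\beta$ yields the assertion.

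The only genuinely delicate point is the bookkeeping: each product must retain exactly one factor carrying the $\eps$ (either $\partial_{yy}W$ through the Lipschitz constant of $\partial_y W$, or $\partial_y W$ itself), while the remaining factors are merely uniformly bounded (the values of $\mathbf{g}$) or Lipschitz (the increment of $\mathbf{g}$) with $\eps$-independent constants. This is precisely why the telescoping should be arranged in the order above. A secondary but essential check is the containment $|y_i(t)|\le 2\widehat M$, which is what licenses the use of \eqref{eq:approxcapa} along both arguments; once this is verified, all estimates are pointwise in $t$ and integrate routinely, so that none of the finer weak-to-strong continuity properties of $\mathcal{V}^*$ are needed here.
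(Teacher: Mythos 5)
Your proposal is correct and follows essentially the same route as the paper: the identical splitting into $\mathbf{g}(y_1)^\top[\partial_y\mathcal{W}(y_1)-\partial_y\mathcal{W}(y_2)]$ plus $[\mathbf{g}(y_1)^\top-\mathbf{g}(y_2)^\top]\partial_y\mathcal{W}(y_2)$, with the Hessian bound of \eqref{eq:approxcapa} controlling the first term and the gradient bound together with the Lipschitz continuity of $\mathbf{g}$ on $\mathcal{Y}_{ad}$ controlling the second. The only cosmetic difference is that you apply the mean value theorem pointwise in $t$ on the convex ball $\bar B_{2\widehat M}(0)$, whereas the paper applies an integral mean value theorem at the level of the Nemitsky operators along the segment $y_1+s(y_2-y_1)\in\mathcal{Y}_{ad}$; the two are equivalent.
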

\begin{proof}
According to the definition of~$\mathcal{F}^*$ and~$\mathcal{F}^\eps_{\theta_\eps}$ we split
\begin{align*}
\|(\F^*(y_1)-\mathcal{F}^\eps_{\theta_\eps}(y_1))-(\F^*(y_2)-\F^\eps_{\theta_\eps}(y_2))\|_{L^2(I;\R^m)} \leq D_1+D_2
\end{align*}
with
\begin{align*}
D_1&= 1/\beta \, \|\mathbf{g}(y_1)^\top\|_{\mathcal{B}(L^2(I;\R^n),L^2(I;\R^m))} \|\partial_y((\mathcal{V}^*(y_1)\!-\!\mathcal{V}^\eps_{\theta_\eps}(y_1))-(\mathcal{V}^*(y_2)\!-\!\mathcal{V}^\eps_{\theta_\eps}(y_2)))\|_{L^2(I;\R^n)} \\
D_2 &= 1/\beta \, \|\mathbf{g}(y_1)^\top-\mathbf{g}(y_2)^\top\|_{\mathcal{B}(L^2(I;\R^n),L^2(I;\R^m))} \|\partial_y(\mathcal{V}^*(y_2)-\mathcal{V}^\eps_{\theta_\eps}(y_2))\|_{L^2(I;\R^n)}.
\end{align*}
Applying the integral mean value theorem yields
\begin{align*}
\notag
 \|\partial_y((\mathcal{V}^*(y_1)-\mathcal{V}^\eps_{\theta_\eps}(y_1))&-(\mathcal{V}^*(y_2)-\mathcal{V}^\eps_{\theta_\eps}(y_2)))\|_{L^2(I;\R^n)}
\\[1.5ex]
& \leq
\sup_{s\in
[0,1]}\|\partial_{yy}(\mathcal{V}^*(y_1+sh)-\mathcal{V}^\eps_{\theta_\eps}(y_1+sh))\|_{\mathcal{B}(W_T,L^2(I;\R^n))}
  \wnorm{h}
\end{align*}
with~$h=y_2-y_1\in W_T$. Note that~$y_1+sh \in \mathcal{Y}_{ad}$ for all~$s \in [0,1]$. Thus we can use Assumption ~\ref{ass:approxsmoothness} for every~$s \in [0,1]$ and~$\delta y \in W_\infty$ and estimate
\begin{align*}
\|\partial_{yy}(\mathcal{V}^*(y_1+sh)&-\mathcal{V}^\eps_{\theta_\eps}(y_1+sh))\delta y\|_{L^2(I;\R^n)}
\\&\leq \sqrt{\int_0^T~|\partial_{yy}(V^*(t,y_1(t)+sh(t))-V^\eps_{\theta_\eps}(t,y_1(t)+sh(t)))|^2_{\R^{n
\times n}}|\delta y(t)|^2\mathrm{d} t}
\\&\leq c \eps  \|\delta y\|_{L^2(I;\R^n)} \leq \eps c \wnorm{\delta y}.
\end{align*}
 Similarly we obtain
\begin{align*}
\|\partial_{y}(\mathcal{V}^*(y_2)-\mathcal{V}^\eps_{\theta_\eps}(y_2))\|_{L^2(I;\R^n)}= \sqrt{\int^T_0 |\partial_{y}({V}^*(t,y_2(t))-{V}^\eps_{\theta_\eps}(t,y_2(t)))|^2~\de t} \leq \sqrt{T}c \eps.
\end{align*}
Last recall that~$\mathbf{g}$ is Lipschitz continuous and uniformly bounded on~$\mathcal{Y}_{ad}$.
Combining these facts yields the desired statement.
\end{proof}
With the same arguments  the following a priori estimate can be obtained. For the sake of brevity
its proof is omitted.
\begin{coroll}\label{lem:calmness}
There exists a constant $c$ such that for all $\eps$ small enough there holds
\begin{align*}
\|\mathcal{F}^*(y)-\mathcal{F}^\eps_{\theta_\eps}(y)\|_{L^2(I;\R^m)}
\leq c \eps \wnorm{y}, \quad \forall y \in \mathcal{Y}_{ad}.
\end{align*}
\end{coroll}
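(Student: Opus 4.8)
The plan is to exploit the multiplicative structure of the two feedback operators and reduce everything to the \emph{zeroth-order} part of the approximation property in Assumption~\ref{ass:approxsmoothness}, so that, in contrast to Lemma~\ref{lem:lipschitzoffeed}, no integral mean value theorem and no Hessian bound are needed. Writing out \eqref{def:optfeddnemitsk} and the definition of $\F^\eps_{\theta_\eps}$, the difference factors as
\begin{align*}
\F^*(y)-\F^\eps_{\theta_\eps}(y)=-\frac1\beta\,\mathbf{g}(y)^\top\big(\partial_y\mathcal{V}^*(y)-\partial_y\mathcal{V}^\eps_{\theta_\eps}(y)\big),
\end{align*}
so that, taking $L^2(I;\R^m)$-norms and pulling out the multiplication operator,
\begin{align*}
\|\F^*(y)-\F^\eps_{\theta_\eps}(y)\|_{L^2(I;\R^m)}\le\frac1\beta\,\|\mathbf{g}(y)^\top\|_{\mathcal{B}(L^2(I;\R^n),L^2(I;\R^m))}\,\|\partial_y(\mathcal{V}^*(y)-\mathcal{V}^\eps_{\theta_\eps}(y))\|_{L^2(I;\R^n)}.
\end{align*}

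First I would bound the gradient factor exactly as in the last display of the proof of Lemma~\ref{lem:lipschitzoffeed}: since $y\in\mathcal{Y}_{ad}$ forces $|y(t)|\le 2\widehat M$ for all $t$ through the embedding $W_T\hookrightarrow\mathcal{C}(\bar I;\R^n)$ together with the identity \eqref{eq:aux1}, the integrand $|\partial_y(V^*-V^\eps_{\theta_\eps})(t,y(t))|$ is controlled pointwise by $c\eps$ via the second term in \eqref{eq:approxcapa}, whence $\|\partial_y(\mathcal{V}^*(y)-\mathcal{V}^\eps_{\theta_\eps}(y))\|_{L^2(I;\R^n)}\le\sqrt{T}\,c\eps$. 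For the multiplier I would invoke that $\mathbf{g}$ is bounded on $\mathcal{Y}_{ad}$ (as recorded after Assumption~\ref{ass:feedbacklaw}), so that $\|\mathbf{g}(y)^\top\|_{\mathcal{B}}$ is uniformly bounded there; if one wants the $y$-dependence made explicit, the Lipschitz growth of $g$ in its second argument from $(\mathbf{A.1})$ gives $|g(t,y(t))|\le C(1+|y(t)|)\le C(1+\wnorm{y})$ uniformly in $t$, producing the stated growth factor $\wnorm{y}$. Combining the two bounds and renaming constants yields the claim with $c$ independent of $\eps$ and of $y\in\mathcal{Y}_{ad}$.

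The only point requiring attention — and the analogue of the sole subtlety in Lemma~\ref{lem:lipschitzoffeed} — is verifying that the argument $(t,y(t))$ remains inside the region $\bar I\times\bar B_{2\widehat M}(0)$ on which Assumption~\ref{ass:approxsmoothness} delivers the $\mathcal{O}(\eps)$ gradient estimate; this is exactly guaranteed by the admissibility constraint $\wnorm{y}\le 2M_{Y_0}$ defining $\mathcal{Y}_{ad}$ and by \eqref{eq:aux1}. Beyond this bookkeeping I do not anticipate a genuine obstacle, since the corollary is simply the one-argument specialisation of the two-argument Lipschitz bound already established, with the Hessian estimate replaced by the coarser gradient estimate.
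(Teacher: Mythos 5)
Your core computation is exactly the paper's intended argument: the paper omits the proof of this corollary with the remark that it follows ``with the same arguments'' as Lemma~\ref{lem:lipschitzoffeed}, and what you write is precisely the second half of that lemma's proof specialized to one argument --- factor out $\mathbf{g}(y)^\top$, bound the gradient difference pointwise by $c\eps$ via \eqref{eq:approxcapa} (legitimate because $y\in\mathcal{Y}_{ad}$ together with \eqref{eq:aux1} keeps $(t,y(t))$ inside $\bar I\times \bar B_{2\widehat M}(0)$), and use the uniform boundedness of $\mathbf{g}$ on $\mathcal{Y}_{ad}$. This correctly yields the uniform bound $\|\F^*(y)-\F^\eps_{\theta_\eps}(y)\|_{L^2(I;\R^m)}\le c\eps$ on $\mathcal{Y}_{ad}$, which is in fact the only form in which the corollary is ever invoked later in the paper (it is always combined with $\wnorm{y}\le \tfrac32 M_{Y_0}$ or $\wnorm{y}\le 2M_{Y_0}$).

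However, your final step, where the growth of $g$ is supposed to ``produce the stated growth factor $\wnorm{y}$,'' does not work and is a genuine gap with respect to the literal statement. The bound $|g(t,y(t))|\le C(1+|y(t)|)\le C(1+\wnorm{y})$ gives at best $c\eps\,(1+\wnorm{y})$, and $1+\wnorm{y}$ cannot be dominated by a constant multiple of $\wnorm{y}$ uniformly over $\mathcal{Y}_{ad}$, since that set contains elements of arbitrarily small norm, including $y\equiv 0$. Indeed, the inequality as stated forces the left-hand side to vanish at $y\equiv 0$, i.e.
\begin{align*}
g(t,0)^\top\partial_y\bigl(V^*(t,0)-V^\eps_{\theta_\eps}(t,0)\bigr)=0 \quad \text{for a.e. } t\in I,
\end{align*}
which neither your argument nor Assumption~\ref{ass:approxsmoothness} delivers: the assumption only makes this quantity $O(\eps)$, not zero. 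So what you have actually proved is the $c\eps$ bound, not $c\eps\wnorm{y}$. To be fair, this discrepancy is inherited from the paper itself --- the ``same arguments'' of Lemma~\ref{lem:lipschitzoffeed} likewise only produce $c\eps$ (or $c\eps\wnorm{y}$ up to an additive $O(\eps)$ term at $y=0$ if one uses the mean value argument), and only $c\eps$ is ever used downstream --- but your attempted repair via the growth of $g$ should be dropped rather than presented as establishing the stated inequality.
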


Next we establish  existence of a unique solution to~\eqref{eq:neuralnetstate} as well as a first approximation result.
\begin{prop} \label{thm:existenceneuralnetwork}
Let Assumptions~\ref{ass:feedbacklaw} and~\ref{ass:approxsmoothness} hold. Then for all~$\eps>0$ small enough
 there is a unique~$\mathbf{y}_\eps \in \mathcal{C}^1(Y_0;W_T)$ such that~$y_\eps\coloneqq\mathbf{y}_\eps(y_0) \in \mathcal{Y}_{ad}$ satisfies~\eqref{eq:neuralnetstate} for all $y_0 \in Y_0$.
 Moreover there exists a constant $c$ independent of $\eps$  such that
\begin{align*}
\|\mathbf{y}^*-\mathbf{y}_\eps\|_{\textcolor{black}{\mathcal{C}(Y_0;W_T)}}+ \|\F^*(\mathbf{y}^*)-\F^{\eps}_{\theta_\eps}(\mathbf{y}_\eps)\|_{\textcolor{black}{\mathcal{C}(Y_0; L^2(I;\R^m))}} \leq c\eps.
\end{align*}
In particular we have $\|\mathbf{y}_\eps\|_{\mathcal{C}(Y_0;W_T)} \leq 2 M_{Y_0}$ for all sufficiently small $\eps$.
\end{prop}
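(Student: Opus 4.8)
The plan is to realize the approximate closed-loop trajectory $y_\eps$ as a quantifiably small perturbation of the exact optimal trajectory $y^*=\mathbf{y}^*(y_0)$ furnished by (\textbf{A.3}), so that existence, uniqueness and the $O(\eps)$ rate all emerge from a single contraction argument localized around $y^*$. First I would fix $y_0\in Y_0$ and recast \eqref{eq:neuralnetstate} as the search for a zero of
\[
G^\eps(y)=\bigl(\dot y-\mathbf{f}(y)-\mathbf{g}(y)\F^\eps_{\theta_\eps}(y),\ y(0)-y_0\bigr)
\]
mapping $W_T$ into $L^2(I;\R^n)\times\R^n$, together with its exact counterpart $G^*$ in which $\F^\eps_{\theta_\eps}$ is replaced by $\F^*$. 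By (\textbf{A.3}) one has $G^*(y^*)=0$ and $\wnorm{y^*}\le M_{Y_0}$, while the derivative $DG^*(y^*)\colon\delta y\mapsto(\dot{\delta y}-A\,\delta y,\,\delta y(0))$ has for $A$ exactly the coefficient matrix of the linearized system in Remark~\ref{rem3}. Since $y^*\in W_T\hookrightarrow\mathcal{C}(\bar I;\R^n)$ and $\partial_{yy}V^*$ is continuous by (\textbf{A.2}), we get $A\in L^\infty(I;\R^{n\times n})$ with norm controlled by $M_{Y_0}$; hence $DG^*(y^*)$ is boundedly invertible by variation of constants, with inverse norm $K$ bounded uniformly for $y_0\in Y_0$ thanks to compactness of $Y_0$ and continuity of $y^*(\cdot)$.

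Next I would quantify the discrepancy between $G^\eps$ and $G^*$. Corollary~\ref{lem:calmness} together with the uniform bound on $\mathbf{g}$ over $\mathcal{Y}_{ad}$ gives
\[
\|G^\eps(y^*)\|=\|\mathbf{g}(y^*)(\F^*(y^*)-\F^\eps_{\theta_\eps}(y^*))\|_{L^2(I;\R^n)}\le c\eps,
\]
and Lemma~\ref{lem:lipschitzoffeed} shows that $G^\eps-G^*$ is Lipschitz on $\mathcal{Y}_{ad}$ with constant $c\eps$. I would then run the Newton-type iteration $\Phi(\delta y)=\delta y-DG^*(y^*)^{-1}G^\eps(y^*+\delta y)$ on the closed ball $\bar B_{r\eps}(0)\subset W_T$, with $r$ chosen so that $\|\Phi(0)\|\le K\|G^\eps(y^*)\|\le Kc\eps\le r\eps$ and $y^*+\delta y\in\mathcal{Y}_{ad}$ throughout (ensured once $r\eps+M_{Y_0}\le 2M_{Y_0}$). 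The contraction factor is bounded by $K$ times the Lipschitz constant of $G^\eps-G^*$ plus the oscillation of $DG^*$ across the ball, both $O(\eps)$, so $\Phi$ is a $\frac{1}{2}$-contraction for $\eps$ small. Its unique fixed point yields $y_\eps\in\mathcal{Y}_{ad}$ solving \eqref{eq:neuralnetstate} with $\wnorm{y_\eps-y^*}\le 2Kc\eps$; uniqueness of the fixed point combined with Picard--Lindel\"of uniqueness for the closed-loop initial value problem shows $y_\eps$ is the only admissible solution, uniformly in $y_0$. This is the bound $\|\mathbf{y}^*-\mathbf{y}_\eps\|_{\mathcal{C}(Y_0;W_T)}\le c\eps$, and in particular $\wnorm{y_\eps}\le M_{Y_0}+c\eps\le 2M_{Y_0}$ for small $\eps$. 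The control estimate then follows from the splitting $\F^*(y^*)-\F^\eps_{\theta_\eps}(y_\eps)=(\F^*(y^*)-\F^\eps_{\theta_\eps}(y^*))+(\F^\eps_{\theta_\eps}(y^*)-\F^\eps_{\theta_\eps}(y_\eps))$, bounding the first term by Corollary~\ref{lem:calmness} and the second by the Lipschitz continuity of $\F^\eps_{\theta_\eps}$ on $\mathcal{Y}_{ad}$ (uniform in $\eps$, since $\F^\eps_{\theta_\eps}=\F^*-(\F^*-\F^\eps_{\theta_\eps})$ with both parts Lipschitz) applied to $\wnorm{y^*-y_\eps}\le c\eps$.

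For the parametric regularity $\mathbf{y}_\eps\in\mathcal{C}^1(Y_0;W_T)$ I would apply the implicit function theorem to $(y,y_0)\mapsto G^\eps(y,y_0)$ at each solution point: $DG^\eps(y_\eps)$ is invertible because it lies within $O(\eps)$ of the invertible $DG^*(y^*)$, and $\F^\eps_{\theta_\eps}$ is $\mathcal{C}^1$ as a map on $W_T$ because $V^\eps\in\mathcal{C}^4$ by Assumption~\ref{ass:approxsmoothness}. The step I expect to be the main obstacle is the self-consistent bootstrapping of the admissibility constraint together with the uniformity over $Y_0$: the estimates of Lemma~\ref{lem:lipschitzoffeed} and Corollary~\ref{lem:calmness} are only available on $\mathcal{Y}_{ad}$, so the iteration must be confined to the closed $\eps$-ball around $y^*$ (which guarantees $y^*+\delta y\in\mathcal{Y}_{ad}$) while simultaneously keeping the contraction threshold and the inverse-norm bound $K$ uniform over the compact learning set. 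Solving the problem as a localized perturbation of $y^*$, rather than integrating the nonlinear closed-loop ODE from scratch, is precisely what makes this constraint self-consistent.
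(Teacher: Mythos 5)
Your proof is correct, and it shares the paper's overall architecture --- localize at $y^*=\mathbf{y}^*(y_0)$, use Corollary~\ref{lem:calmness} and Lemma~\ref{lem:lipschitzoffeed} to make the defect and its Lipschitz modulus of size $O(\eps)$, close with a contraction uniform in $y_0$, recover the control estimate by a splitting, and obtain $\mathcal{C}^1(Y_0;W_T)$ regularity from the implicit function theorem --- but the contraction itself is built differently. The paper iterates the map $\mathcal{Z}$ sending $y$ to the solution $z$ of the \emph{exact} closed-loop system $\dot z=\mathbf{f}(z)+\mathbf{g}(z)\F^*(z)+\mathbf{g}(y)\bigl(\F^\eps_{\theta_\eps}(y)-\F^*(y)\bigr)$, $z(0)=y_0$, on the fixed ball $\wnorm{y}\le\tfrac{3}{2}M_{Y_0}$; the self-mapping and contraction properties, as well as uniqueness and the $\mathcal{C}^1$ dependence on $y_0$, are delegated to the nonlinear perturbation results of the appendix (Proposition~\ref{thm:existspert} and Corollary~\ref{coroll:locallipschitzofstate}), and the $O(\eps)$ rate is read off from $\wnorm{\mathcal{Z}(y_\eps)-\mathcal{Z}(0)}$. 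You instead run a Newton--Kantorovich iteration with the frozen linearized operator $DG^*(y^*)^{-1}$ on a ball of radius $O(\eps)$, which inlines what the paper outsources: your uniform invertibility constant $K$, obtained by Gronwall, is essentially the bound \eqref{eq:uniformT} that the paper only needs later in Proposition~\ref{thm:aprioriW12}, and your additional ingredient --- the $O(\eps)$ oscillation of $DG^*$ over the small ball --- is supplied by the Lipschitz continuity of derivatives in (\textbf{A.1})--(\textbf{A.2}). What each buys: the paper's route makes each iterate an exact nonlinear solve on a ball of fixed radius and reuses appendix machinery needed elsewhere anyway; your route avoids Proposition~\ref{thm:existspert} and Corollary~\ref{coroll:locallipschitzofstate} entirely, yields the estimate $\wnorm{y_\eps-y^*}\le c\eps$ directly from the radius of the invariant ball, and your appeal to Picard--Lindel\"of for uniqueness in all of $\mathcal{Y}_{ad}$ is more direct than the paper's citation of Proposition~\ref{thm:existspert}, whose uniqueness statement is tied to a single fixed perturbation $v$ and identifies competing solutions of \eqref{eq:neuralnetstate} only after an extra argument. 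Your variant of the control-estimate splitting (via uniform-in-$\eps$ Lipschitz continuity of $\F^\eps_{\theta_\eps}$ rather than of $\F^*$) is equally valid.
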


\begin{proof}
The proof is based on a fixed-point argument. Let~$y_0 \in
Y_0$ be arbitrary but fixed. Define the set
\begin{align*}
\mathcal{M}= \left\{\, y \in W_T \;|\;\wnorm{y} \leq \frac{3}{2}
M_{Y_0}\,\right\} \subset \mathcal{Y}_{ad}.
\end{align*}
On~$\mathcal{M}$ we consider the mapping~$\mathcal{Z} \colon \mathcal{M}
\to W_T$, where~$z=\mathcal{Z}(y)\in \mathcal{Y}_{ad}$ is the unique
solution of
\begin{align} \label{eq:auxeqfixpoint}
\dot{z}= \mathbf{f}(z)+ \mathbf{g}(z)\F^*(z)+
\mathbf{g}(y)\F^\eps_{\theta_\epsilon}(y)-\mathbf{g}(y) \mathcal{F}^*(y),
\quad z(0)=y_0.
\end{align}
It is well-defined since the
perturbation function~$v=\mathbf{g}(y)\F^\eps_{\theta_\epsilon}(y)-
\mathbf{g}(y) \mathcal{F}^*(y)\in L^2(I;\R^n)$ satisfies
\begin{align*}
\|v\|_{L^2}\leq \|\mathbf{g}(y)\|_{\mathcal{B}(L^2(I;\R^m),L^2(I;\R^n))}
\|\mathcal{F}^*(y)-\F^\eps_{\theta_\epsilon}(y)\|_{L^2} \leq \frac{3}{2} c \eps  M_{Y_0}  \|\mathbf{g}(y)\|_{\mathcal{B}(L^2(I;\R^m),L^2(I;\R^n))}
\end{align*}
where we use Corollary~\ref{lem:calmness} and the definition of~$\mathcal{M}$. Hence $\|v\|_{L^2}\leq c \eps$.
Here and below $c$ denotes a generic constant which is independent of $y_0\in Y_0$ and all $\epsilon>0$ sufficiently small.
We may invoke Proposition~\ref{thm:existspert}
and Corollary~\ref{coroll:locallipschitzofstate} from the Appendix,  to assert the existence of a
unique solution~$z\in \mathcal{Y}_{ad}$ to~\eqref{eq:auxeqfixpoint} with
\begin{align*}
\wnorm{z}& \leq M_{Y_0}+c\|v\|_{L^2} \leq \frac{3}{2} M_{Y_0},\quad \forall y_0 \in Y_0,
\end{align*}
if~$\eps>0$ is chosen small enough.
From this we particularly conclude~$\mathcal{Z}(\mathcal{M})\subset \mathcal{M}$ for
all~$y_0 \in Y_0$ and~$\eps >0$ small. It remains to prove
that~$\mathcal{Z}$ is a contraction. To this end let~$y_1,~y_2 \in
\mathcal{M}$ be given. Applying
Corollary~\ref{coroll:locallipschitzofstate} yields the first inequality in
\begin{align*}
\wnorm{\mathcal{Z}(y_1)-\mathcal{Z}(y_2)} &\leq c
\|\mathcal{F}^*(y_1)-\F^\eps_{\theta_\epsilon}(y_1)-\mathcal{F}^*(y_2)+\F^\eps_{\theta_\epsilon}(y_2)\|_{L^2} \leq c \eps \wnorm{y_1-y_2}
\end{align*}
with a constant~$c>0$ independent of~$y_1, y_2 \in \mathcal{M}$ as well
as of~$y_0 \in Y_0$, and $\epsilon$ sufficiently small.
The last inequality follows from
Lemma~\ref{lem:lipschitzoffeed}.
Choosing~$\eps>0$ small enough we conclude that~$\mathcal{Z}$ admits a
unique fixed point~$y_\eps=\mathcal{Z}(y_\eps) \in W_T$
on~$\mathcal{M}$. Clearly, the function~$ \mathbf{y}_\eps(y_0):=y_\eps$
satisfies~\eqref{eq:neuralnetstate},~$y_\eps \in  {\mathcal{M}} \subset\mathcal{Y}_{ad}$ as well as
\begin{align*}
\wnorm{\mathbf{y}_\eps(y_0)-\mathbf{y}^*(y_0)}&=\wnorm{\mathcal{Z}(\mathbf{y}_\eps(y_0))-\mathcal{Z}(0)}\leq
c\eps \wnorm{y_\eps}
\leq c \eps \frac{3}{2} M_{Y_0},
\end{align*}
and by Corollary \ref{lem:calmness}
\begin{align*}
\|\mathcal{F}^*(\mathbf{y}^*(y_0))&-\mathcal{F}^\eps_{\theta_\eps}(\mathbf{y}_\eps(y_0))\|_{L^2} \\&\leq \|\mathcal{F}^*(\mathbf{y}^*(y_0))-\mathcal{F}^*(\mathbf{y}_\eps(y_0))\|_{L^2} + \|\mathcal{F}^*(\mathbf{y}_\eps(y_0))-\mathcal{F}^\eps_{\theta_\eps}(\mathbf{y}_\eps(y_0))\|_{L^2} \\ & \leq
c \wnorm{\mathbf{y}^*(y_0)-\mathbf{y}_\eps(y_0)}+c \eps \wnorm{\mathbf{y}_\eps(y_0)} \leq c \eps.
\end{align*}
Finally according to Proposition~\ref{thm:existspert} the solution $\mathbf{y}_\eps(y_0) $ is unique and  the mapping~$\mathbf{y}_\eps $ is at least of class~$\mathcal{C}^1$ .
\end{proof}

Next we estimate the~$W^{1,2}$ difference between~$\mathbf{y}_\eps$ and~$\mathbf{y}^*$.
\begin{prop} \label{thm:aprioriW12}
The mapping~$\mathbf{y}_\eps \in \mathcal{C}^1(Y_0;W_T)$ from Theorem~\ref{thm:existenceneuralnetwork} satisfies
\begin{align*}
\|\mathbf{y}_\eps-\mathbf{y}^*\|_{\mathcal{C}^1(Y_0;W_T)} \leq c\eps
\end{align*}
for~$c>0$ independently of~$\eps$ small enough.
\end{prop}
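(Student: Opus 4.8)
The plan is to augment the $\mathcal{C}(Y_0;W_T)$ estimate already furnished by Proposition~\ref{thm:existenceneuralnetwork} with a matching bound on the $y_0$-derivatives, so that the two together yield the asserted $\mathcal{C}^1(Y_0;W_T)$ estimate. Both $\mathbf{y}^*$ and $\mathbf{y}_\eps$ are of class $\mathcal{C}^1$ on $Y_0$ (the former by Remark~\ref{rem3}, the latter by Proposition~\ref{thm:existenceneuralnetwork}), so for fixed $y_0\in Y_0$ and a direction $\delta y_0\in\R^n$ the sensitivities $\delta y^*\coloneqq \partial\mathbf{y}^*(y_0)(\delta y_0)$ and $\delta y_\eps\coloneqq \partial\mathbf{y}_\eps(y_0)(\delta y_0)$ solve linearized closed-loop equations—that of Remark~\ref{rem3} for $\delta y^*$, and the one obtained by differentiating \eqref{eq:neuralnetstate} for $\delta y_\eps$—both starting from $\delta y_0$ at $t=0$. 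First I would write these two linearizations down explicitly: their principal operators are the linearizations of the respective closed-loop vector fields at $\mathbf{y}_\eps$ and at $\mathbf{y}^*$, involving $D\mathbf{f}$, $D\mathbf{g}$, $\mathbf{g}$, the feedback laws $\F^\eps_{\theta_\eps}$, $\F^*$, and their Fréchet derivatives $D\F^\eps_{\theta_\eps}$, $D\F^*$.

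Next I would set $w\coloneqq \delta y_\eps-\delta y^*$, note $w(0)=0$, and subtract the two sensitivity equations. Organizing the result so that the linearization of the \emph{approximate} closed-loop system at $\mathbf{y}_\eps$ acts on $w$, all remaining terms form a source $R_\eps$ containing only coefficient differences multiplied by the \emph{fixed} sensitivity $\delta y^*$. Concretely, each contribution to $R_\eps$ is of one of two types: either a difference of a coefficient operator evaluated at $\mathbf{y}_\eps$ versus $\mathbf{y}^*$ (such as $D\mathbf{f}(\mathbf{y}_\eps)-D\mathbf{f}(\mathbf{y}^*)$, $\mathbf{g}(\mathbf{y}_\eps)-\mathbf{g}(\mathbf{y}^*)$, or $D\F^*(\mathbf{y}_\eps)-D\F^*(\mathbf{y}^*)$), or a feedback approximation error at a common base point (such as $\F^\eps_{\theta_\eps}(\mathbf{y}_\eps)-\F^*(\mathbf{y}_\eps)$ and $D\F^\eps_{\theta_\eps}(\mathbf{y}_\eps)-D\F^*(\mathbf{y}_\eps)$), each applied to $\delta y^*$.

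The bounds then follow term by term. The coefficient-difference terms are controlled by the Lipschitz continuity of $D\mathbf{f}$, $D\mathbf{g}$, $\mathbf{g}$ (from (\textbf{A.1})) and of $D\F^*$ (from (\textbf{A.1})--(\textbf{A.2})) together with the already established $\|\mathbf{y}_\eps-\mathbf{y}^*\|_{\mathcal{C}(Y_0;W_T)}\leq c\eps$, giving each a factor $c\eps$. The approximation-error terms are handled by Assumption~\ref{ass:approxsmoothness}: since $D\F^\eps_\theta$ and $D\F^*$ are expressed through $\partial_y\mathcal{V}$ and $\partial_{yy}\mathcal{V}$, the difference $D\F^\eps_{\theta_\eps}(\mathbf{y}_\eps)-D\F^*(\mathbf{y}_\eps)$ is bounded by the $C^0$-distances of $\partial_y$ and $\partial_{yy}$ of $V^\eps_{\theta_\eps}$ and $V^*$, both $\leq c\eps$ by \eqref{eq:approxcapa} (the mechanism is exactly that of Lemma~\ref{lem:lipschitzoffeed} and Corollary~\ref{lem:calmness}). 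One also needs a uniform bound $\wnorm{\delta y^*}\leq c\,|\delta y_0|$, which holds because $\mathbf{y}^*\in\mathcal{C}^1(Y_0;W_T)$ and $Y_0$ is compact, as well as uniform-in-$\eps$ bounds on $\F^\eps_{\theta_\eps}(\mathbf{y}_\eps)$ and $D\F^\eps_{\theta_\eps}(\mathbf{y}_\eps)$, which again follow from \eqref{eq:approxcapa}. Collecting the estimates gives $\|R_\eps\|_{L^2(I;\R^n)}\leq c\eps\,|\delta y_0|$ with $c$ independent of $\eps$, $y_0$, and $\delta y_0$.

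Finally I would invoke the linear perturbation estimate of Proposition~\ref{thm:existspert} together with Corollary~\ref{coroll:locallipschitzofstate} for the equation $\dot w=(\text{linearized operator at }\mathbf{y}_\eps)\,w+R_\eps$, $w(0)=0$, to obtain $\wnorm{w}\leq c\,\|R_\eps\|_{L^2(I;\R^n)}\leq c\eps\,|\delta y_0|$. Taking the supremum over $|\delta y_0|\leq 1$ and $y_0\in Y_0$ bounds $\|\partial(\mathbf{y}_\eps-\mathbf{y}^*)\|_{\mathcal{C}(Y_0;\mathcal{B}(\R^n,W_T))}$ by $c\eps$; combined with the $\mathcal{C}(Y_0;W_T)$ bound from Proposition~\ref{thm:existenceneuralnetwork}, this yields the claim. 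I expect the main obstacle to be verifying that the Gronwall/stability constant for the linearized approximate system is uniform in $\eps$—this rests on the fact that the linearized coefficients at $\mathbf{y}_\eps$ converge to those of the optimal linearized system (for which (\textbf{A.3}) and the appendix results guarantee stability), so that for $\eps$ small the same constant applies—and the bookkeeping for the feedback-derivative difference, whose $O(\eps)$ control crucially needs the \emph{Hessian} estimate in \eqref{eq:approxcapa} and not merely the gradient estimate.
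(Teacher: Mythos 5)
Your proposal is correct and follows essentially the same route as the paper: the paper also differentiates both closed-loop systems, writes $\partial_i\mathbf{y}^*(y_0)=T_*(y_0)^{-1}(0,e_i)$ and $\partial_i\mathbf{y}_\eps(y_0)=T_\eps(y_0)^{-1}(0,e_i)$ for the linearized operators, proves the uniform stability bound and the $O(\eps)$ operator-difference estimate $\|(T_*(y_0)-T_\eps(y_0))\delta y\|\leq c\eps\wnorm{\delta y}$ from Lipschitz continuity, Corollary~\ref{lem:calmness} and Proposition~\ref{thm:existenceneuralnetwork}, and concludes via the identity $B^{-1}-A^{-1}=A^{-1}(A-B)B^{-1}$. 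Your subtraction of the sensitivity equations plus Gronwall for $w=\delta y_\eps-\delta y^*$ with source $R_\eps=(T_*-T_\eps)\delta y^*$ is exactly this resolvent identity written in ODE form, with the same ingredients (including the Hessian part of \eqref{eq:approxcapa}) and the same uniformity considerations.
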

\begin{proof}
By the previous proposition the estimate is already known for ${\mathcal{C}^1(Y_0;W_T)}$ replaced by  ${\mathcal{C}(Y_0;W_T)}$.
Now fix~$y_0 \in Y_0$ and~$i \in \{1, \dots,n\}$. By the inverse mapping theorem the partial derivatives of~$\mathbf{y}^*$ and~$\mathbf{y}_\eps$ at~$y_0$  are given by~$\partial_i \mathbf{y}^*(y_0)= T_*(y_0)^{-1}(0,e_i)$,~$\partial_i \mathbf{y}_\eps(y_0)= T_\eps (y_0)^{-1}(0,e_i)$. Here,~$e_i$ denotes the i-th canonical basis vector in~$\R^n$ and
\begin{align*}
T_*(y_0)^{-1}, T_\eps(y_0)^{-1} \colon L^2(I;\R^n) \times \R^n \to W_T
\end{align*}
denote the linear continuous inverses of
\begin{align*}
T_*(y_0)\delta y\!=\!\left(
\begin{array}{c}
\!\!\!\dot{\delta y}-D\mathbf{f}(\mathbf{y}^*(y_0))\delta y-\lbrack D\mathbf{g}(\mathbf{y}^*(y_0))\delta y\rbrack\F^*(\mathbf{y}^*(y_0))-\mathbf{g}(\mathbf{y}^*(y_0))D\mathcal{F}^*(\mathbf{y}^*(y_0))\delta y\\
\delta y(0)\\
\end{array}
\!\!\!\right)
\end{align*}
and
\begin{align*}
T_\eps(y_0)\delta y\!=\!\left(
\begin{array}{c}
\!\!\!\dot{\delta y}-D\mathbf{f}(\mathbf{y}_\eps(y_0))\delta y-\lbrack D\mathbf{g}(\mathbf{y}_\eps(y_0))\delta y\rbrack\F^\eps_{\theta_\eps}(\mathbf{y}_\eps(y_0))-\mathbf{g}(\mathbf{y}_\eps(y_0))D\mathcal{F}^\eps_{\theta_\eps}(\mathbf{y}_\eps(y_0))\delta y\\
\delta y(0)\\
\end{array}
\!\!\!\right).
\end{align*}
Using Gronwall's inequality, we readily verify that
\begin{align} \label{eq:uniformT}
\max \left\{ \|T_\eps (y_0)^{-1}(\delta v,  \delta y_0 )\|_{W_T},\|T_*(y_0)^{-1}(\delta v,  \delta y_0 )\|_{W_T} \right\} \leq C(\|\delta v\|_{L^2(I;\R^n)}+|\delta y_0|_{\R^n})
\end{align}
for all~$ \delta v \in L^2(I,\R^n),~\delta y_0 \in \R^n$,~$y_0 \in Y_0$ and some~$C>0$ independent of~$y_0, \delta v, \delta y_0 $. Now we recall that~$\mathbf{y}_\eps(y_0) , \mathbf{y}^*(y_0)\in \mathcal{Y}_{ad}$ and that~$D \mathbf{f}, D \mathbf{g},\mathbf{g}$ are Lipschitz continuous, and thus in particular bounded, on~$\mathcal{Y}_{ad}$, see Assumption~\ref{ass:feedbacklaw} $\mathbf{A.1}$. Together with~boundedness of $\{ \|\F^*(\mathbf{y}(y_0))\|_{L^2}: y_0 \in Y_0\}$, Corollary~\ref{lem:calmness} and Theorem~\ref{thm:existenceneuralnetwork} we conclude
\begin{align} \label{eq:kk14}
\|(T_*(y_0)-T_\eps(y_0))\delta y\|_{L^2(I;\R^n)\times \R^n} \leq c \eps \wnorm{\delta y} \quad \forall \delta y \in W_T
\end{align}
for some~$c>0$ again independent of~$y_0 \in Y_0$. Recalling that $B^{-1} - A^{-1}=A^{-1}(A-B)B^{-1}$ for invertible bounded linear operators $A$ and $B$, we obtain
\begin{align*}
\wnorm{\partial_i\mathbf{y}_\eps(y_0)-\partial_i \mathbf{y}^*(y_0)}&= \wnorm{T_\eps (y_0)^{-1}(0,e_i)-T_* (y_0)^{-1}(0,e_i)} \\ & \leq  C^2 \sup_{\|\delta y\|_{W_T} \leq 1} \|(T_*(y_0)-T_\eps(y_0))\delta y\|_{L^2(I;\R^n)\times \R^n}  \leq c \eps,
\end{align*}
where~$C>0$ is the constant from~\eqref{eq:uniformT}. Since all involved constants are independent of~$y_0 \in Y_0$ we obtain the desired estimate
$
\|\partial_i \mathbf{y}_\eps-\partial_i \mathbf{y}^*\|_{\mathcal{C}} \leq c \eps.
$
\end{proof}
Next we address the solvability of the adjoint equation~\eqref{eq:adjointpropapprox}.
\begin{prop} \label{prop:solvofadjoint}
There exists a constant $c$ such that for all $\eps$ small enough
there exists~$\mathbf{p}_\eps \in \mathcal{C}(Y_0;W_T)$ such that~$p_\eps \coloneqq \mathbf{p}_\eps(y_0)\in W_T$ satisfies~\eqref{eq:neuraladjoint} for all~$y_0 \in Y_0$ and
\begin{align*}
\|\mathbf{p}_\eps-\mathbf{p}^*\|_{\mathcal{C}} \leq c \eps.
\end{align*}
\end{prop}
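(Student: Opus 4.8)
The plan is to regard the adjoint equation \eqref{eq:neuraladjoint} as a \emph{linear}, backward-in-time terminal value problem for $p_\eps$, with coefficients frozen along the state $\mathbf{y}_\eps$ already constructed in Proposition~\ref{thm:existenceneuralnetwork}, and then to compare its solution with that of \eqref{eq:adjointprop} by a Gronwall argument applied to the difference. First I would settle existence, uniqueness and continuity of $\mathbf{p}_\eps$. For fixed $y_0 \in Y_0$ set $y_\eps = \mathbf{y}_\eps(y_0) \in \mathcal{Y}_{ad}$. By $(\mathbf{A.1})$ and Remark~\ref{rem3} the Nemitsky operators $D\mathbf{f}(y_\eps)^\top$ and $[D\mathbf{g}(y_\eps)^\top\F^\eps_{\theta_\eps}(y_\eps)]$ are bounded on $\mathcal{Y}_{ad}$, while $\F^\eps_{\theta_\eps}(y_\eps)$ is bounded in $L^2(I;\R^m)$ uniformly in $y_0$ and $\eps$ by Corollary~\ref{lem:calmness} together with the uniform boundedness of $\F^*$ on $\mathcal{Y}_{ad}$. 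Exactly as in the proof of Proposition~\ref{prop:structure}, the resulting linear problem admits a unique solution $p_\eps \in W_T$ depending continuously on the data; since $\mathbf{y}_\eps \in \mathcal{C}^1(Y_0;W_T)$, the map $y_0 \mapsto p_\eps$ is continuous, so $\mathbf{p}_\eps \in \mathcal{C}(Y_0;W_T)$.

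For the rate, fix $y_0$, write $y = \mathbf{y}^*(y_0)$, $p = \mathbf{p}^*(y_0)$ and $\delta p = p_\eps - p$. Subtracting \eqref{eq:adjointprop} from \eqref{eq:neuraladjoint} and inserting intermediate terms, $\delta p$ solves
\[
-\dot{\delta p} = A_\eps\,\delta p + r_\eps, \qquad \delta p(T) = Q_2^\top Q_2\big(y_\eps(T) - y(T)\big),
\]
where $A_\eps = D\mathbf{f}(y_\eps)^\top + [D\mathbf{g}(y_\eps)^\top \F^\eps_{\theta_\eps}(y_\eps)]$ and
\begin{align*}
r_\eps &= \big(D\mathbf{f}(y_\eps)^\top - D\mathbf{f}(y)^\top\big)p \\
&\quad + \big([D\mathbf{g}(y_\eps)^\top \F^\eps_{\theta_\eps}(y_\eps)] - [D\mathbf{g}(y)^\top \F^*(y)]\big)p + Q_1^\top Q_1(y_\eps - y).
\end{align*}
The point is that $A_\eps$ is the coefficient of a linear backward ODE, whereas $r_\eps$ is a forcing term which I expect to be of order $\eps$ in $L^2(I;\R^n)$.

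I would estimate $\|r_\eps\|_{L^2(I;\R^n)} \le c\eps$ termwise, using throughout that $\|p\|_{L^\infty(I;\R^n)}$ is bounded uniformly in $y_0$ (continuity of $\mathbf{p}^*$ on the compact set $Y_0$) and that $\|y_\eps - y\|_{W_T} \le c\eps$ by Proposition~\ref{thm:existenceneuralnetwork}, together with the embedding $W_T \hookrightarrow \mathcal{C}(\bar I;\R^n)$. The first and last terms are then controlled by the Lipschitz continuity of $D\mathbf{f}$ and by $Q_1^\top Q_1$ being bounded. For the middle term I would split
\begin{align*}
[D\mathbf{g}(y_\eps)^\top \F^\eps_{\theta_\eps}(y_\eps)] - [D\mathbf{g}(y)^\top \F^*(y)]
&= [D\mathbf{g}(y_\eps)^\top(\F^\eps_{\theta_\eps}(y_\eps) - \F^*(y_\eps))] \\
&\quad + [(D\mathbf{g}(y_\eps)^\top - D\mathbf{g}(y)^\top)\F^*(y_\eps)] + [D\mathbf{g}(y)^\top(\F^*(y_\eps) - \F^*(y))],
\end{align*}
and bound the three pieces by Corollary~\ref{lem:calmness}, by the Lipschitz continuity of $D\mathbf{g}$ with $\F^*(y_\eps)$ bounded in $L^2$, and by the Lipschitz continuity of $\F^*$ (as in Lemma~\ref{lem:lipschitzoffeed}), respectively; each contributes a factor $\eps$ after multiplication by the bounded $p$. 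The terminal datum obeys $|\delta p(T)| \le c\eps$ by the same embedding.

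Finally, written pointwise in time the coefficient of $\delta p$ is a multiplication by $D_yf(t,y_\eps(t))^\top$, bounded in $L^\infty(I)$, plus a multiplication whose entries are $L^2(I)$-in-time, and both are bounded uniformly in $\eps$ and $y_0$ thanks to the uniform $L^2$-bound on $\F^\eps_{\theta_\eps}(\mathbf{y}_\eps)$. A Gronwall estimate for the backward ODE with $L^1$-in-time coefficient then gives $\|\delta p\|_{L^\infty(I;\R^n)} \le c\big(|\delta p(T)| + \|r_\eps\|_{L^2}\big) \le c\eps$; reading the ODE back yields $\|\dot{\delta p}\|_{L^2} \le c\eps$ and hence $\|\delta p\|_{W_T} \le c\eps$. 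Taking the supremum over $y_0 \in Y_0$ produces $\|\mathbf{p}_\eps - \mathbf{p}^*\|_{\mathcal{C}} \le c\eps$. I expect the main obstacle to be the bookkeeping that keeps every constant uniform in $y_0$ and $\eps$ — in particular securing a uniform Gronwall constant even though the $\mathbf{g}$-coefficient is only $L^2$ (not $L^\infty$) in time, which is precisely where the uniform boundedness of the learned feedback $\F^\eps_{\theta_\eps}(\mathbf{y}_\eps)$ from Corollary~\ref{lem:calmness} and Proposition~\ref{thm:existenceneuralnetwork} enters.
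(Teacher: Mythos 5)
Your proposal is correct and follows essentially the same route as the paper: existence and continuity of $\mathbf{p}_\eps$ via the solution operator of the linear backward equation composed with $\mathbf{y}_\eps$, and the $O(\eps)$ rate via a linear perturbation estimate whose key ingredient is the bound on the coefficient difference $\lbrack D\mathbf{g}(y_\eps)^\top \F^\eps_{\theta_\eps}(y_\eps)\rbrack - \lbrack D\mathbf{g}(y)^\top \F^*(y)\rbrack$ obtained from Proposition~\ref{thm:existenceneuralnetwork}, Corollary~\ref{lem:calmness} and the Lipschitz properties of $D\mathbf{f}$, $D\mathbf{g}$, $\F^*$. The only (cosmetic) difference is that you run Gronwall directly on the difference equation for $\delta p = p_\eps - p$ with the forcing term $r_\eps$, whereas the paper invokes the operator-inverse perturbation technique of Proposition~\ref{thm:aprioriW12} (the analogue of \eqref{eq:kk14}); these are the same estimate in two guises.
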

\begin{proof}
Given~$y\in \mathcal{Y}_{ad}$ consider the linear ordinary differential equation
\begin{align*}
-\dot{p}= D\mathbf{f}(y)p+\lbrack D \mathbf{g}(y)^\top \F^\eps_{\theta_\eps}(y)\rbrack p+\mathbf{Q}_1^\top \mathbf{Q}_1(y-y_d),~p(T)= Q^\top_2 Q_2(y(T)-y^T_d).
\end{align*}
It admits a unique solution~$p=P(y)\in W_T$ which is bounded independently of~$y\in \mathcal{Y}_{ad}$. Moreover the mapping~$P \colon W_T \to W_T$ is continuous on~$\mathcal{Y}_{ad}$ in virtue of the Gronwall lemma and Assumption~\ref{ass:feedbacklaw}. The existence of a mapping~$\mathbf{p}_\eps$ which satisfies~\eqref{eq:neuraladjoint} then follows by setting~$\mathbf{p}_\eps=P \circ \mathbf{y}_\eps$.

It remains to prove the estimate for the difference between~$\mathbf{p}_\eps$ satisfying \eqref{eq:neuraladjoint} and~$\mathbf{p}^*$ satisfying \eqref{eq:adjointprop}. For this purpose
we can use the same technique as in the proof of Proposition \ref{thm:aprioriW12} and therefore we only give the main estimates.
Recall that~$D \mathbf{f}(\cdot)^\top ,D \mathbf{g}(\cdot)^\top$ are Lipschitz continuous on~$\mathcal{Y}_{ad}$.
The the most involved term in the estimate analogous to \eqref{eq:kk14} is
\begin{align*}
\|\lbrack D \mathbf{g}(\mathbf{y}_\eps(y_0))^\top &\F^\eps_{\theta_\eps}(\mathbf{y}_\eps(y_0))-D \mathbf{g}(\mathbf{y}^*(y_0))^\top \F^*(\mathbf{y}^*(y_0))\rbrack \delta p\|_{L^2}\\& \leq c(\wnorm{\mathbf{y}_\eps(y_0)-\mathbf{y}^*(y_0)}+\|\F^\eps_{\theta_\eps}(\mathbf{y}_\eps(y_0))-\F^*(\mathbf{y}^*(y_0))\|_{L^2}) \wnorm{\delta p}
\end{align*}
with~$c>0$ independent of~$\eps>0$ and $\delta p \in W_T$. Now a perturbation argument as in the proof of Propostion \ref{thm:aprioriW12} provides us with
\begin{align*}
\wnorm{&\mathbf{p}_\eps(y_0)-\mathbf{p}^*(y_0)}\\& \leq c (\wnorm{\mathbf{y}_\eps(y_0)-\mathbf{y}^*(y_0)}+\|\F^\eps_{\theta_\eps}(\mathbf{y}_\eps(y_0))-\F^*(\mathbf{y}^*(y_0))\|_{L^2}+|\mathbf{y}_\eps(y_0)(T)-\mathbf{y}^*(y_0)(T)|) \\ & \leq c(\wnorm{\mathbf{y}_\eps(y_0)-\mathbf{y}^*(y_0)}+\|\F^\eps_{\theta_\eps}(\mathbf{y}_\eps(y_0))-\F^*(\mathbf{y}^*(y_0))\|_{L^2}) \leq c \eps
\end{align*}
where~$W_T \hookrightarrow \mathcal{C}(\bar{I};\R^n)$ is used
in the second inequality, and Proposition ~\ref{thm:aprioriW12} and Corollary \ref{lem:calmness} are utilized in the final one. Since all involved constants are again independent of~$y_0 \in Y_0$, this finishes the proof.
\end{proof}
Summarizing all previous observations we arrive at the proof of  Theorem~\ref{thm:existenceneuralnetwork2}.
\begin{proof}[Proof of Theorem~\ref{thm:existenceneuralnetwork2}]
This follows directly by combining  Proposition~\ref{thm:existenceneuralnetwork}, Proposition~\ref{thm:aprioriW12}, and Proposition~\ref{prop:solvofadjoint}.
\end{proof}
\subsection{Closedness of~$\mathcal{N}^\eps_{ad}$} \label{subsec:closedandexist}
As a last prerequisite for proving existence to~\eqref{def:approxfeedprop} we argue that the admissible set~$\mathcal{N}^\eps_{ad}$ is closed. The existence of at least one minimizing triple to~\eqref{def:approxfeedprop} then follows by variational arguments. From here on we always assume that~$\mathcal{N}^\eps_{ad}$  from \eqref{def:admissibleset} is nonempty, i.e. that $\eps$ is sufficiently small.

\begin{prop} \label{prop:closedofNad}
Let~$(\mathbf{y}_k,\mathbf{p}_k,\theta_k)_{k\in\N} \subset \mathcal{N}^\eps_{ad} $ be a sequence with weak limit~$(\mathbf{y},\mathbf{p},\theta)$ in~$L^2(Y_0;W_T)^2 \times \mathcal{R}_\eps$. Then
$(\mathbf{y},\mathbf{p},\theta) \in \mathcal{N}^\eps_{ad} $ and we have
\begin{align*}
(\mathbf{y},\mathbf{p}) \in \mathcal{C}(Y_0;W_T)^2,~\lim_{k\rightarrow \infty} \mathbf{y}_k(y_0)=\mathbf{y}(y_0) \text{ and}~\lim_{k\rightarrow \infty} \mathbf{p}_k(y_0)=\mathbf{p}(y_0) \text{ in } W_T, \quad \forall~y_0 \in Y_0.
\end{align*}
\end{prop}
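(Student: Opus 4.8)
The plan is to combine the finite dimensionality of $\mathcal{R}_\eps$ with the compactness of the embedding $W_T\hookrightarrow\mathcal{C}(\bar I;\R^n)$ and with Gronwall-type stability estimates for the state and adjoint ODEs, in order to upgrade the given weak convergence to \emph{pointwise} strong convergence in $W_T$, and then to identify the pointwise limit with the weak $L^2(Y_0;W_T)$ limit. First, since $\mathcal{R}_\eps\simeq\R^{N_\eps}$ is finite dimensional, the weak convergence $\theta_k\rightharpoonup\theta$ is in fact strong, so $\theta_k\to\theta$ in $\mathcal{R}_\eps$. Now fix $y_0\in Y_0$. Because $(\mathbf{y}_k,\mathbf{p}_k,\theta_k)\in\mathcal{N}^\eps_{ad}\subset\mathbf{Y}_{ad}\times\mathcal{C}(Y_0;W_T)\times\mathcal{R}_\eps$, we have $\wnorm{\mathbf{y}_k(y_0)}\leq 2M_{Y_0}$ for all $k$, and each $\mathbf{y}_k(y_0)$ solves the closed loop state equation \eqref{eq:statepropapprox} with parameter $\theta_k$. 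As $W_T\hookrightarrow\mathcal{C}(\bar I;\R^n)$ is compact (one-dimensional Rellich theorem), every subsequence of $\{\mathbf{y}_k(y_0)\}$ has a further subsequence converging weakly in $W_T$ and strongly in $\mathcal{C}(\bar I;\R^n)$. Along it the Nemitsky operators $\mathbf{f},\mathbf{g}$ and the feedback law $\F^\eps_{\theta_k}$ — which depends continuously on $(y,\theta)$ since $V^\eps\in\mathcal{C}^4(\mathcal{R}_\eps\times\R\times\R^n)$ by Assumption~\ref{ass:approxsmoothness} — converge strongly, so one passes to the limit in \eqref{eq:statepropapprox} and concludes that the limit solves the state equation for $\theta$, lies in $\mathcal{Y}_{ad}$, and satisfies the initial condition $\mathbf{y}(y_0)(0)=y_0$. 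Moreover $\dot{\mathbf{y}}_k(y_0)$ then converges strongly in $L^2(I;\R^n)$, which upgrades the convergence to strong convergence in $W_T$.

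Next I would identify the limit. The vector field $y\mapsto\mathbf{f}(y)+\mathbf{g}(y)\F^\eps_\theta(y)$ is Lipschitz on $\mathcal{Y}_{ad}$ (using $(\mathbf{A.1})$ together with $V^\eps\in\mathcal{C}^4$), hence by Gronwall's inequality the state equation with parameter $\theta$ has at most one solution in $\mathcal{Y}_{ad}$. Consequently every subsequential limit coincides, so the \emph{whole} sequence $\mathbf{y}_k(y_0)$ converges strongly in $W_T$ to this unique solution, which I denote $\mathbf{y}(y_0)$. Continuous dependence on $y_0$ (again Gronwall) gives $\mathbf{y}\in\mathcal{C}(Y_0;W_T)$, and the uniform bound $\wnorm{\mathbf{y}_k(y_0)}\leq 2M_{Y_0}$ together with dominated convergence on $(Y_0,\mathcal{L})$ yields $\mathbf{y}_k\to\mathbf{y}$ strongly in $L^2(Y_0;W_T)$; since strong and weak limits agree, this $\mathbf{y}$ is precisely the given weak limit. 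Hence $\mathbf{y}\in\mathbf{Y}_{ad}$ solves \eqref{eq:statepropapprox} with $\mathbf{y}(y_0)\in\mathcal{Y}_{ad}$, and $\F^\eps_\theta(\mathbf{y})\in\mathbf{U}_{ad}$ follows from the boundedness of $\F^\eps_\theta$ on $\mathcal{Y}_{ad}$.

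For the adjoint component I would exploit linearity. For each $y_0$ the function $\mathbf{p}_k(y_0)$ solves the linear equation \eqref{eq:adjointpropapprox}, whose coefficients $D\mathbf{f}(\mathbf{y}_k(y_0))^\top$ and $[D\mathbf{g}(\mathbf{y}_k(y_0))^\top\F^\eps_{\theta_k}(\mathbf{y}_k(y_0))]$ and whose inhomogeneity converge strongly (using $\mathbf{y}_k(y_0)\to\mathbf{y}(y_0)$ in $\mathcal{C}(\bar I;\R^n)$ and $\theta_k\to\theta$), and whose terminal datum $Q_2^\top Q_2(\mathbf{y}_k(y_0)(T)-y_d^T)$ converges by continuity of the trace $W_T\hookrightarrow\mathcal{C}(\bar I;\R^n)$. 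A Gronwall/perturbation argument exactly as in Proposition~\ref{prop:solvofadjoint} then gives $\mathbf{p}_k(y_0)\to\mathbf{p}(y_0)$ strongly in $W_T$, where $\mathbf{p}(y_0)$ is the unique solution of \eqref{eq:adjointpropapprox}--\eqref{eq:constraintprop} for the limiting data. The uniform bound on adjoint solutions over $\mathcal{Y}_{ad}$ and dominated convergence again give $\mathbf{p}_k\to\mathbf{p}$ strongly in $L^2(Y_0;W_T)$, identifying $\mathbf{p}$ with the weak limit and showing $\mathbf{p}\in\mathcal{C}(Y_0;W_T)$. Collecting these facts establishes $(\mathbf{y},\mathbf{p},\theta)\in\mathcal{N}^\eps_{ad}$ together with the claimed pointwise limits.

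I expect the main obstacle to be the first step: upgrading weak to pointwise strong convergence and passing to the limit through the nonlinear term $\mathbf{g}(y)\F^\eps_\theta(y)$. This hinges on the \emph{compactness} of $W_T\hookrightarrow\mathcal{C}(\bar I;\R^n)$, which is what lets the nonlinear Nemitsky operators be evaluated along strongly convergent arguments, and on the \emph{uniqueness} of the limiting closed loop trajectory, which is needed to pin down the subsequential limits to a single value. Once strong pointwise convergence of $\mathbf{y}_k(y_0)$ is in hand, the adjoint step is comparatively routine because that equation is linear in $\mathbf{p}$.
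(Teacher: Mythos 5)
Your proposal is correct and follows essentially the same route as the paper's proof (via its Lemmas~\ref{lem:weakclosedstate} and~\ref{lem:weakclosedadjoint}): strong convergence of $\theta_k$ by finite dimensionality, pointwise weak-to-strong upgrading of $\mathbf{y}_k(y_0)$ through the compact embedding $W_T\hookrightarrow\mathcal{C}(\bar I;\R^n)$ and uniform convergence of $\F^\eps_{\theta_k}$, identification of the limit by uniqueness of the closed loop trajectory, dominated convergence to match the pointwise limit with the weak $L^2(Y_0;W_T)$ limit, and a linear Gronwall-type perturbation argument for the adjoint. The only cosmetic difference is that you invoke Lipschitz continuity and Gronwall directly for uniqueness and continuity in $y_0$, where the paper cites Proposition~\ref{thm:existenceneuralnetwork}; these are equivalent justifications.
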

The proof builds upon the following two lemmas.
\begin{lemma} \label{lem:weakclosedstate}
Let the sequence $(\mathbf{y}_k,\mathbf{p}_k,\theta_k)_{k\in\N} \subset \mathcal{N}^\eps_{ad} $ satisfy the prerequisites of Proposition~\ref{prop:closedofNad}. Then~$\mathbf{y} \in \mathbf{Y}_{ad}$, $\mathbf{y}_k(y_0)\rightarrow  \mathbf{y}(y_0)\text{ in}~W_T,$ $\F^\eps_{\theta_k}(\mathbf{y}_k(y_0))\rightarrow \F^\eps_{\theta}({\mathbf{y}}(y_0))~\text{in}~L^\infty (I;\R^m)$, and
\begin{align}\label{eq:aux8}
\dot{\mathbf{y}}(y_0)= \mathbf{f}({\mathbf{y}}(y_0))+ \mathbf{g}({\mathbf{y}}(y_0))\F^\eps_{\theta}({\mathbf{y}}(y_0)),~{\mathbf{y}}(y_0)(0)=y_0,
\end{align}
for all~$y_0 \in Y_0$.
\end{lemma}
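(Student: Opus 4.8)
The plan is to fix an arbitrary $y_0 \in Y_0$, work pointwise in the initial datum, and combine the compact embedding $W_T \hookrightarrow \mathcal{C}(\bar I;\R^n)$ with the finite dimensionality of $\mathcal{R}_\eps$. First I would record two elementary reductions: since $\mathcal{R}_\eps \simeq \R^{N_\eps}$ is finite dimensional, the weak convergence $\theta_k \rightharpoonup \theta$ is in fact strong; and since $(\mathbf{y}_k)_k \subset \mathbf{Y}_{ad}$, every $\mathbf{y}_k(y_0)$ is bounded in $W_T$ by $2M_{Y_0}$, uniformly in $k$ and $y_0$. Because $W_T = H^1(I;\R^n)$ embeds compactly into $\mathcal{C}(\bar I;\R^n)$ on the bounded interval $I$, from any subsequence I can extract a further subsequence along which $\mathbf{y}_k(y_0) \rightharpoonup z$ in $W_T$ and $\mathbf{y}_k(y_0) \to z$ uniformly on $\bar I$, with $z \in \mathcal{Y}_{ad}$, the latter being closed and convex, hence weakly closed.

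Next I would pass to the limit in the integrated form of \eqref{eq:statepropapprox}. The decisive step is the convergence of the feedback term. Since $(t,\vartheta,y)\mapsto \partial_y V^\eps_\vartheta(t,y)$ is continuous --- indeed of class $\mathcal{C}^3$ by Assumption~\ref{ass:approxsmoothness} --- the uniform convergence $\mathbf{y}_k(y_0)\to z$ on $\bar I$, the strong convergence $\theta_k\to\theta$, and the fact that all arguments remain in a fixed compact subset of $\R\times\R^n$ together yield $\partial_y V^\eps_{\theta_k}(\cdot,\mathbf{y}_k(y_0)(\cdot))\to\partial_y V^\eps_{\theta}(\cdot,z(\cdot))$ uniformly on $\bar I$. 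Multiplying by the uniformly convergent matrix $g(\cdot,\mathbf{y}_k(y_0)(\cdot))$ gives $\F^\eps_{\theta_k}(\mathbf{y}_k(y_0))\to\F^\eps_\theta(z)$ in $L^\infty(I;\R^m)$. Together with the continuity of the Nemitsky operators $\mathbf{f}$ and $\mathbf{g}$ under uniform convergence, letting $k\to\infty$ in $\mathbf{y}_k(y_0)(t)=y_0+\int_0^t \bigl(\mathbf{f}(\mathbf{y}_k(y_0))+\mathbf{g}(\mathbf{y}_k(y_0))\F^\eps_{\theta_k}(\mathbf{y}_k(y_0))\bigr)\,\mathrm{d}s$ shows that $z$ solves \eqref{eq:aux8} with datum $y_0$.

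The right-hand side $y\mapsto \mathbf{f}(y)+\mathbf{g}(y)\F^\eps_\theta(y)$ is Lipschitz on $\mathcal{Y}_{ad}$, by Assumption~\ref{ass:feedbacklaw} and the $\mathcal{C}^3$-regularity of $\partial_y V^\eps_\theta$, so the solution of \eqref{eq:aux8} in $\mathcal{Y}_{ad}$ is unique. This forces the limit $z$ to be independent of the chosen subsequence, whence the whole sequence converges; set $\tilde{\mathbf{y}}(y_0):=z$. Strong convergence in $W_T$ then follows since $\dot{\mathbf{y}}_k(y_0)=\mathbf{f}(\mathbf{y}_k(y_0))+\mathbf{g}(\mathbf{y}_k(y_0))\F^\eps_{\theta_k}(\mathbf{y}_k(y_0))\to \dot{\tilde{\mathbf{y}}}(y_0)$ strongly in $L^2(I;\R^n)$, which, paired with the strong $L^2(I;\R^n)$ convergence inherited from the uniform convergence of $\mathbf{y}_k(y_0)$, upgrades the weak $W_T$-convergence to a strong one. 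In particular $\|\tilde{\mathbf{y}}(y_0)\|_{W_T}\le 2M_{Y_0}$.

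Finally I would identify $\tilde{\mathbf{y}}$ with the weak limit $\mathbf{y}$ and settle the regularity. Continuous dependence of the unique solution of \eqref{eq:aux8} on the initial condition, again via the Lipschitz bound and Gronwall's inequality, shows that $y_0\mapsto\tilde{\mathbf{y}}(y_0)$ is continuous from $Y_0$ into $W_T$, so $\tilde{\mathbf{y}}\in\mathcal{C}(Y_0;W_T)$ with $\|\tilde{\mathbf{y}}\|_{\mathcal{C}}\le 2M_{Y_0}$, i.e. $\tilde{\mathbf{y}}\in\mathbf{Y}_{ad}$. Moreover, the pointwise strong convergence $\mathbf{y}_k(y_0)\to\tilde{\mathbf{y}}(y_0)$ in $W_T$ together with the uniform bound $\|\mathbf{y}_k(y_0)\|_{W_T}\le 2M_{Y_0}$ lets me invoke dominated convergence over the finite measure space $(Y_0,\mathcal{L})$ to obtain $\mathbf{y}_k\to\tilde{\mathbf{y}}$ strongly in $L^2(Y_0;W_T)$; by uniqueness of weak limits $\mathbf{y}=\tilde{\mathbf{y}}$, which yields all the assertions. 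I expect the main obstacle to be the passage to the limit in the nonlinear feedback term and, more subtly, the reconciliation of the weak limit taken in $L^2(Y_0;W_T)$ with the pointwise-in-$y_0$ strong limits produced by the ODE --- precisely the role of the uniqueness and dominated-convergence steps.
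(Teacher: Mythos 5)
Your proposal is correct and follows essentially the same route as the paper's proof: pointwise-in-$y_0$ weak subsequence extraction, compact embedding $W_T \hookrightarrow_c \mathcal{C}(\bar I;\R^n)$, convergence of the feedback term from the (strong, finite-dimensional) convergence $\theta_k \to \theta$ together with the regularity of $V^\eps$ on compacts, passage to the limit in the closed-loop equation, uniqueness of its solution to get whole-sequence strong $W_T$-convergence, and finally dominated convergence over $(Y_0,\mathcal{L})$ to identify the pointwise limit with the weak limit $\mathbf{y}$ in $L^2(Y_0;W_T)$. The only cosmetic difference is that you establish continuity of $y_0 \mapsto \tilde{\mathbf{y}}(y_0)$ directly via Gronwall, whereas the paper cites its earlier well-posedness result (Proposition~\ref{thm:existenceneuralnetwork}) for this.
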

\begin{proof}
By assumption we have~$\mathbf{y}_k \in \mathbf{Y}_{ad}$, and hence $\|\mathbf{y}_k(y_0)\|_{W_T} \le 2M_{Y_0}$ for all $k \in \N$ and $y_0 \in Y_0$,  and $\mathbf{y}_k \in \mathcal{C}^1(Y_0;W_T)$ for all~$k \in \N$, see Proposition \ref{thm:existenceneuralnetwork}.  Let us fix an arbitrary $y_0 \in Y_0$.
and set~$y_k \coloneqq \mathbf{y}_k(y_0)$ for abbreviation. Then there exists a subsequence, denoted by the same index, and~$\tilde y\in W_T$ such that~$y_k \rightharpoonup \tilde y $ in $W_T$. Since~$W_T \hookrightarrow_c \mathcal{C}(\bar{I};\R^n) \hookrightarrow L^p(I;\R^n)$,~$1\leq p \leq +\infty$, we immediately get
\begin{align*}
y_k(0) \rightarrow \tilde y(0)~\text{in}~\R^n,~\mathbf{f}(y_k) \rightarrow \mathbf{f}(\tilde y)~\text{in}~L^2(I;R^n),~\mathbf{g}(y_k) \rightarrow \mathbf{g}(\tilde y)~\text{in}~\mathcal{B}(L^2(I;\R^m),L^2(I;\R^n))
\end{align*}
as well as
$\F^\eps_{\theta}(y_k) \rightarrow \F^\eps_{\theta}(\tilde y)~\text{in}~L^\infty(I;\R^m). $
Moreover  by Assumption~\ref{ass:approxsmoothness}  for every~$\delta >0$ there exits~$K_\delta\in \N$ such that
\begin{align}\label{eq:kk15}
|\partial_y V^\eps_{\theta_k}(t,y)-\partial_y V^\eps_{\theta}(t,y)|\leq \delta \quad \forall (t,y)\in \bar{I} \times \bar{B}_{2\widehat{M}}(0)
\end{align}
for all~$k\geq K_\delta$. Here~$\widehat{M}$ denotes the constant from Assumption~\ref{ass:feedbacklaw}~$\mathbf{A.2}$. For all such~$k$ we get utilizing \eqref{eq:kk15} for a constant $c$ independent of $k$
 \begin{align*}
\|\F^\eps_{\theta_k}(y_k)-\F^\eps_{\theta}(\tilde y)\|_{L^\infty} &\leq c\|\F^\eps_{\theta_k}(y_k)-\F^\eps_{\theta}(y_k)\|_{L^\infty}+\|\F^\eps_{\theta}(y_k)-\F^\eps_{\theta}(\tilde y)\|_{L^\infty} \\ & \leq c \delta + \|\F^\eps_{\theta}(y_k)-\F^\eps_{\theta}(\tilde y)\|_{L^\infty}.
\end{align*}
This implies that ~$\lim_{k \to \infty} \F^\eps_{\theta_k}(y_k)=\F^\eps_{\theta}(\tilde y)$ in~$L^\infty(I;\R^m)$.
 These observations   imply
\begin{align*}
\dot{y}_k= \mathbf{f}(y_k)+\mathbf{g}(y_k)\F^\eps_{\theta_k}(y_k) \rightarrow \mathbf{f}(\tilde y)+\mathbf{g}(\tilde y)\F^\eps_{\theta}(\tilde y).
\end{align*}
Together with  $y_k \rightharpoonup \tilde y $ in $W_T$ this implies that
$\mathbf{y}_k(y_0)=y_k \rightarrow \tilde y$ in~$W_T$ and
\begin{align} \label{eq:closedloopaux14}
\dot{\tilde y}=\mathbf{f}(\tilde y)+\mathbf{g}(\tilde y)\F^\eps_{\theta}(\tilde y) ,~\tilde y(0)=y_0.
\end{align}
Since the solution to this equation is unique,  every weak accumulation point of~$y_k$ satisfies~\eqref{eq:closedloopaux14} and we have
$\mathbf{y}_k(y_0)\rightarrow \tilde y~\text{in}~W_T$
for the whole sequence. We repeat this construction for all $y_0\in Y_0$. This defines a function $ \tilde {\bf{y}}:Y_0 \to W_T$ such that $\mathbf{y}_k(y_0)\rightarrow \tilde{\bf{y}}(y_0)~\text{in}~W_T$  and such that \eqref{eq:closedloopaux14} is satisfied with $\tilde y= \tilde {\bf{y}}(y_0)$ for each $y_0\in Y_0$.
By Proposition \ref{thm:existenceneuralnetwork} it is the unique solution to \eqref{eq:aux8}.

Lebesgue's dominated convergence theorem for Bochner integrals \cite[pg 45]{DU77}  implies that $\mathbf{y}_k\rightarrow \tilde {\bf{y}}$ in $L^1(Y_0;W_T)$, and by boundedness of $\{\|\mathbf{y}_k\|_{\mathcal{C}}\}_{k=1}^\infty$  also in $L^2(Y_0;W_T)$.
By assumption ${\bf{y}}_k$ converges weakly in $L^2(Y_0;W_T)$ to $\bf{y}$. Thus we have
 $\bf{y}=\tilde {\bf{y}}$. Moreover $\|\mathbf{y}\|_{\mathcal{C}} \le 2M_{Y_0}$ and hence $\mathbf{y} \in \mathbf{Y}_{ad}$.
\end{proof}

Next we consider the behavior of the adjoint states~$\mathbf{p}_k$.
\begin{lemma} \label{lem:weakclosedadjoint}
Let  $(\mathbf{y}_k,\mathbf{p}_k,\theta_k)_{k\in\N} \subset \mathcal{N}^\eps_{ad} $ be a sequence with weak limit~$(\mathbf{y},\mathbf{p},\theta)$ satisfying  the prerequisites of Proposition~\ref{prop:closedofNad}. Then~$\|\mathbf{p}_k\|_{\mathcal{C}}\leq C$ for some~$C>0$ and all~$k \in\N$ large enough, and~$\mathbf{p} \in \mathcal{C}(Y_0;W_T)$. Moreover~$\mathbf{p}_k(y_0)\rightarrow \mathbf{p}(y_0)~\text{in}~W_T$, and
\begin{equation} \label{eq:adjointlim}
\begin{array}{ll}
-\dot{\mathbf{p}}(y_0) = D\mathbf{f}(\mathbf{y}(y_0))^\top\mathbf{p}(y_0)+ [D\mathbf{g}(\mathbf{y}(y_0))^\top\F^\eps_{\theta}(\mathbf{y}(y_0))]{\mathbf{p}}(y_0)+ \mathbf{Q}_1^\top \mathbf{Q}_1(\mathbf{y}(y_0)-y_d),\\[1.6ex]
\;\mathbf{p}(y_0))(T)= Q^\top_2 Q_2(\mathbf{y}(T)(y_0)-y^T_d),
\end{array}
\end{equation}
for all~$y_0 \in Y_0$.
\end{lemma}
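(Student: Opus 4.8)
The plan is to read \eqref{eq:adjointlim} as a \emph{linear} terminal value problem for $\mathbf{p}_k(y_0)$ whose coefficients are completely controlled by the state/feedback convergence already secured in Lemma~\ref{lem:weakclosedstate}, and to reproduce for the adjoints the same three-stage pattern used there: a uniform bound, pointwise-in-$y_0$ strong convergence, and finally identification of the weak $L^2(Y_0;W_T)$-limit via the Bochner dominated convergence theorem.

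First I would fix $y_0 \in Y_0$ and abbreviate $p_k = \mathbf{p}_k(y_0)$, $y_k = \mathbf{y}_k(y_0)$. Since $\mathbf{y}_k \in \mathbf{Y}_{ad}$, the trajectories $y_k$ lie in $\mathcal{Y}_{ad}$ and are uniformly bounded in $\mathcal{C}(\bar I;\R^n)$ through $W_T \hookrightarrow \mathcal{C}(\bar I;\R^n)$; by (A.1) the operators $D\mathbf{f}(y_k)^\top,\, D\mathbf{g}(y_k)^\top,\, \mathbf{g}(y_k)$ are therefore bounded on $\mathcal{Y}_{ad}$ uniformly in $k$. Because $\mathcal{R}_\eps \simeq \R^{N_\eps}$ is finite dimensional, the assumed weak convergence $\theta_k \rightharpoonup \theta$ is in fact strong, so by Assumption~\ref{ass:approxsmoothness} the family $\{\F^\eps_{\theta_k}(y_k)\}$ is uniformly bounded in $L^\infty(I;\R^m)$. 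Hence the linear equation \eqref{eq:neuraladjoint} for $p_k$ has uniformly bounded coefficients and terminal datum $Q_2^\top Q_2(y_k(T)-y_d^T)$, and Gronwall's inequality yields $\wnorm{p_k} \le C$ with $C$ independent of $y_0$ and $k$. This already gives $\|\mathbf{p}_k\|_{\mathcal{C}} \le C$.

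Next I would let $\mathbf{p}(y_0) \in W_T$ be the unique solution of the limiting problem \eqref{eq:adjointlim}, which exists by Proposition~\ref{prop:solvofadjoint} applied to $y = \mathbf{y}(y_0) \in \mathcal{Y}_{ad}$ (recall $\mathbf{y} \in \mathbf{Y}_{ad}$ from Lemma~\ref{lem:weakclosedstate}). Subtracting the two terminal value problems, the difference $\delta p_k := p_k - \mathbf{p}(y_0)$ solves a linear equation driven by a perturbation $R_k$ built from the coefficient differences $(D\mathbf{f}(y_k)^\top - D\mathbf{f}(\mathbf{y}(y_0))^\top)\mathbf{p}(y_0)$, the more delicate bilinear term $(D\mathbf{g}(y_k)^\top\F^\eps_{\theta_k}(y_k) - D\mathbf{g}(\mathbf{y}(y_0))^\top\F^\eps_\theta(\mathbf{y}(y_0)))\mathbf{p}(y_0)$, and $\mathbf{Q}_1^\top\mathbf{Q}_1(y_k - \mathbf{y}(y_0))$, together with the terminal perturbation $Q_2^\top Q_2(y_k(T) - \mathbf{y}(y_0)(T))$. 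By Lemma~\ref{lem:weakclosedstate} we have $y_k \to \mathbf{y}(y_0)$ in $W_T$ and $\F^\eps_{\theta_k}(y_k) \to \F^\eps_\theta(\mathbf{y}(y_0))$ in $L^\infty(I;\R^m)$; together with continuity and boundedness of $D\mathbf{f}^\top, D\mathbf{g}^\top$ on $\mathcal{Y}_{ad}$ and the embedding $W_T \hookrightarrow \mathcal{C}(\bar I;\R^n)$, this forces $R_k \to 0$ in $L^2(I;\R^n)$ and the terminal perturbation to $0$. A Gronwall estimate, exactly as in Proposition~\ref{prop:solvofadjoint}, then gives $\wnorm{\delta p_k} \to 0$, so $\mathbf{p}_k(y_0) \to \mathbf{p}(y_0)$ in $W_T$, and passing to the limit in \eqref{eq:neuraladjoint} confirms that $\mathbf{p}(y_0)$ satisfies \eqref{eq:adjointlim}.

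Finally I would repeat this for every $y_0$ to obtain $\tilde{\mathbf{p}}:Y_0 \to W_T$, $\tilde{\mathbf{p}}(y_0)=\mathbf{p}(y_0)$, with pointwise strong convergence $\mathbf{p}_k(y_0) \to \tilde{\mathbf{p}}(y_0)$. Writing $\tilde{\mathbf{p}} = P \circ \mathbf{y}$ with the solution operator $P$ of the linear adjoint equation, continuous on $\mathcal{Y}_{ad}$ by Proposition~\ref{prop:solvofadjoint}, and using $\mathbf{y} \in \mathbf{Y}_{ad} \subset \mathcal{C}(Y_0;W_T)$ from Lemma~\ref{lem:weakclosedstate}, yields $\tilde{\mathbf{p}} \in \mathcal{C}(Y_0;W_T)$. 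The uniform bound of the first step is the integrable majorant needed for the dominated convergence theorem for Bochner integrals, so $\mathbf{p}_k \to \tilde{\mathbf{p}}$ in $L^1(Y_0;W_T)$ and, by boundedness of $\{\|\mathbf{p}_k\|_{\mathcal{C}}\}$, also in $L^2(Y_0;W_T)$; comparing with the hypothesis $\mathbf{p}_k \rightharpoonup \mathbf{p}$ gives $\mathbf{p} = \tilde{\mathbf{p}}$. The main obstacle will be the passage to the limit in the bilinear coefficient $D\mathbf{g}(y_k)^\top\F^\eps_{\theta_k}(y_k)$, where both factors vary with $k$; this is resolved precisely by the uniform $L^\infty$-convergence of the feedback laws from Lemma~\ref{lem:weakclosedstate}, which relies in turn on the finite-dimensionality of $\mathcal{R}_\eps$ and on Assumption~\ref{ass:approxsmoothness}.
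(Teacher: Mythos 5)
Your proposal is correct and follows essentially the same route as the paper's proof: a uniform bound $\|\mathbf{p}_k\|_{\mathcal{C}}\leq C$ obtained from the uniform $L^\infty$ bound on the feedbacks $\F^\eps_{\theta_k}(\mathbf{y}_k)$ (finite-dimensionality of $\mathcal{R}_\eps$ together with Assumption~\ref{ass:approxsmoothness}) and a Gronwall-type estimate for the linear adjoint equation, then pointwise-in-$y_0$ strong $W_T$-convergence driven by the state/feedback convergences of Lemma~\ref{lem:weakclosedstate}, and finally identification of the weak Bochner limit via dominated convergence. The only cosmetic difference is the middle step, where you subtract the limiting equation and estimate the difference directly by Gronwall, while the paper extracts weak accumulation points, passes to the limit in the equation, and invokes uniqueness of the limit solution; both variants rest on exactly the same ingredients.
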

\begin{proof}
From Lemma \ref{lem:weakclosedstate} recall that for the  sequences~$~y_k\coloneqq \mathbf{y}_k(y_0) \in \mathcal{Y}_{ad}$ and~$y \coloneqq \mathbf{y}(y_0)$ we have  for each $y_0\in Y_0$
\begin{align*}
y_k \rightarrow y~\text{in}~ W_T,~ \F^\eps_{\theta_k}(y_k) \rightarrow \F^\eps_{\theta}(y)~\text{in}~L^\infty(I;\R^m).
\end{align*}
Further for each~$k\in\N$ and~$y_0 \in Y_0 $, the element  $p_k \coloneqq \mathbf{p}_k(y_0) \in W_T$ satisfies
\begin{align} \label{eq:adjointaux}
-\dot{p}_k= D\mathbf{f}(y_k)^\top p_k+\lbrack D \mathbf{g}(y_k)^\top \F^\eps_{\theta_k}(y_k)\rbrack p_k+\mathbf{Q}_1^\top \mathbf{Q}_1(y_k-y_d),~p_k(T)= Q^\top_2 Q_2(y_k(T)-y^T_d).
\end{align}
 Recall from  Assumption~\ref{ass:approxsmoothness} that
~$\partial_y V_{\cdot}^\eps$  is uniformly continuous on compact sets. Thus for every~$\delta>0$ there is~$K_\delta \in \N$ such that
\begin{align*}
|F^\eps_{\theta_k}(t,x)| \leq |F^\eps_{\theta_k}(t,x)-F^\eps_{\theta}(t,x)|+ |F^\eps_{\theta}(t,x)| \leq \delta+ \max_{(t,x)\in I \times \bar{B}_{2 \widehat{M}}(0)} |F^\eps_{\theta}(t,x)|  < \infty
\end{align*}
for all~$(t,x)\in I \times \bar{B}_{2 \widehat{M}}(0)$ and~$k\geq K_\delta$. Consequently we obtain
\begin{align*}
 \sup_{k \ge K_\delta} \max_{(t,x)\in I \times \bar{B}_{2 \widehat{M}}(0)} \|A_k(t,x)\|_{\R^{n\times n}} < \infty, \text{ where }
 A_k(t,x)= Df(t,x)^{\top}+ D g(t,x)^\top F^\eps_{\theta_k}(t,x).
\end{align*}

Applying Proposition~\ref{prop:s} to the time-reversed equation \eqref{eq:adjointaux} implies that
\begin{align*}
\wnorm{p_k} \leq c \left( \|\mathbf{Q}_1^\top \mathbf{Q}_1(y_k-{y}_d)\|_{L^2}+ |y_k(T)-y^T_d| \right)
\end{align*}
for some~$c>0$ independent of~$y_0 \in Y_0$ and all sufficiently large $k$. Since~$\|\mathbf{y}_k\|_{\mathcal{C}}\leq 2M_{Y_0}$ we finally conclude~$\|\mathbf{p}_k\|_{\mathcal{C}} \leq C$ for some~$C>0$ independent of~$k$ sufficiently large.
We are now prepared to pass to the limit in \eqref{eq:adjointaux}.
For this purpose we proceed as in the proof of Lemma \ref{lem:weakclosedstate} and use
\begin{align*}
D \mathbf{f}(y_k)+ D \mathbf{g}(y_k)^\top \F^\eps_{\theta_k}(y_k) \rightarrow D \mathbf{f}(y)+ D \mathbf{g}(y)^\top \F^\eps_{\theta}(y)~\text{in}~\mathcal{B}(L^2(Y;\R^n)),
\end{align*}
as well as
\begin{align*}
\mathbf{Q}_1^\top \mathbf{Q}_1(y_k-{y}_d) \rightarrow \mathbf{Q}_1^\top \mathbf{Q}_1(y-{y}_d)~\text{in}~L^2(I;\R^n),
\end{align*}
and
\begin{align*}
 Q^\top_2 Q_2(y_k(T)-y^T_d) \rightarrow Q^\top_2 Q_2(y(T)-y^T_d)~\text{in}~\R^n
\end{align*}
to show that every weak accumulation point~$\tilde p \in W_T$ of~$p_k$ is in fact a strong accumulation point and  satisfies the differential equation in~\eqref{eq:adjointlim}. Since the solution to this equation is unique we get~$p_k \rightarrow \tilde p $ in~$W_T$ for the whole sequence. Finally utilizing~$\|\mathbf{p}_k\|_{\mathcal{C}} \leq C$ and Lebesgue's dominated convergence theorem we conclude~$\tilde p=\mathbf{p}(y_0)$ for all $y_0 \in Y_0$.
\end{proof}
\begin{proof}[Proof  of Proposition~\ref{prop:closedofNad}]
This is a direct consequence of  Lemma~\ref{lem:weakclosedstate} and Lemma~\ref{lem:weakclosedadjoint}.
\end{proof}
\subsection{Existence of minimizers}
Finally we prove the existence of at least one minimizing triplet to~\eqref{def:approxfeedprop}.
\begin{theorem} \label{thm:existence}
Let Assumption~\ref{ass:feedbacklaw} and~\ref{ass:approxsmoothness} hold. Then for all~$\eps>0$ small enough, Problem~\eqref{def:approxfeedprop} admits at least one minimizing triplet~$(\mathbf{y}^*_\eps, \mathbf{p}^*_\eps, \theta^*_\eps ) \in \mathcal{C}(Y_0;W_T)^2 \times \mathcal{R}_\eps $.
\end{theorem}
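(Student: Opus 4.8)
The plan is to run the direct method of the calculus of variations, exploiting the two structural results already established: the admissible set $\mathcal{N}^\eps_{ad}$ is nonempty for $\eps$ small by Theorem~\ref{thm:existenceneuralnetwork2}, and it is weakly sequentially closed by Proposition~\ref{prop:closedofNad}. Fix $\eps>0$ small enough that $\mathcal{N}^\eps_{ad}\neq\emptyset$. Since every summand of $J_\eps$ in \eqref{eq:aux2} is nonnegative, $\omega\geq c>0$, and $\tfrac{\gamma_\eps}{2}\|\theta\|^2_{\mathcal{R}_\eps}\geq 0$, the functional $\mathcal{J}_\eps$ is bounded below by $0$, so $m_\eps\coloneqq\inf_{\mathcal{N}^\eps_{ad}}\mathcal{J}_\eps\in[0,\infty)$ is finite (finiteness of $\mathcal{J}_\eps$ at admissible points following from the uniform state and adjoint bounds). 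I would then select a minimizing sequence $(\mathbf{y}_k,\mathbf{p}_k,\theta_k)_{k\in\N}\subset\mathcal{N}^\eps_{ad}$ with $\mathcal{J}_\eps(\mathbf{y}_k,\mathbf{p}_k,\theta_k)\to m_\eps$.

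The first task is to secure compactness for each component. Because $\mathcal{J}_\eps\geq\tfrac{\gamma_\eps}{2}\|\theta\|^2_{\mathcal{R}_\eps}$ on $\mathcal{N}^\eps_{ad}$, the bound $\tfrac{\gamma_\eps}{2}\|\theta_k\|^2_{\mathcal{R}_\eps}\leq m_\eps+1$ holds for large $k$, so $(\theta_k)$ is bounded in the finite-dimensional space $\mathcal{R}_\eps$ and, after passing to a subsequence, $\theta_k\to\theta$ strongly. The constraint $\mathbf{y}_k\in\mathbf{Y}_{ad}$ gives $\|\mathbf{y}_k\|_{\mathcal{C}(Y_0;W_T)}\leq 2M_{Y_0}$, hence $(\mathbf{y}_k)$ is bounded in $L^2(Y_0;W_T)$ and I may assume $\mathbf{y}_k\rightharpoonup\mathbf{y}$ there. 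For the adjoints I would reuse the a priori estimate from the proof of Lemma~\ref{lem:weakclosedadjoint}: it requires only that the coefficient matrices $A_k=D\mathbf{f}(\mathbf{y}_k)^\top+D\mathbf{g}(\mathbf{y}_k)^\top\F^\eps_{\theta_k}(\mathbf{y}_k)$ be uniformly bounded on $\mathcal{Y}_{ad}$, which follows from the boundedness (not yet convergence) of $(\theta_k)$ together with the joint continuity of $\partial_y V^\eps_\cdot$ in Assumption~\ref{ass:approxsmoothness}. Via Proposition~\ref{prop:s} this yields $\|\mathbf{p}_k\|_{\mathcal{C}(Y_0;W_T)}\leq C$, so $(\mathbf{p}_k)$ is bounded in $L^2(Y_0;W_T)$ and, along a further subsequence, $\mathbf{p}_k\rightharpoonup\mathbf{p}$. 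The common subsequence $(\mathbf{y}_k,\mathbf{p}_k,\theta_k)$ now converges weakly to $(\mathbf{y},\mathbf{p},\theta)$ in $L^2(Y_0;W_T)^2\times\mathcal{R}_\eps$.

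These are exactly the hypotheses of Proposition~\ref{prop:closedofNad}, which I would invoke to conclude $(\mathbf{y},\mathbf{p},\theta)\in\mathcal{N}^\eps_{ad}$, that $(\mathbf{y},\mathbf{p})\in\mathcal{C}(Y_0;W_T)^2$, and—crucially—the pointwise strong convergences $\mathbf{y}_k(y_0)\to\mathbf{y}(y_0)$ and $\mathbf{p}_k(y_0)\to\mathbf{p}(y_0)$ in $W_T$ for every $y_0\in Y_0$. It then remains to pass to the limit in the cost. For each fixed $y_0$ the value $J_\eps(\mathbf{y}_k(y_0),\mathbf{p}_k(y_0),\theta_k)\to J_\eps(\mathbf{y}(y_0),\mathbf{p}(y_0),\theta)$, since $J_\eps$ depends continuously on $(y,p,\theta)\in W_T\times W_T\times\mathcal{R}_\eps$ through the continuous Nemitsky operators $\mathbf{f},\mathbf{g}$ and the $\mathcal{C}^4$-dependence of $V^\eps_\theta$ on $(\theta,t,y)$, the latter giving continuity of $V^\eps_{\theta_k}(\cdot,\mathbf{y}_k(y_0)(\cdot))$, $\partial_y V^\eps_{\theta_k}(\cdot,\mathbf{y}_k(y_0)(\cdot))$ and $\F^\eps_{\theta_k}(\mathbf{y}_k(y_0))$. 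Because $\|\mathbf{y}_k\|_{\mathcal{C}(Y_0;W_T)}$, $\|\mathbf{p}_k\|_{\mathcal{C}(Y_0;W_T)}$ and $\|\theta_k\|_{\mathcal{R}_\eps}$ are uniformly bounded and $J_\eps$ maps bounded sets to bounded sets, the integrands $y_0\mapsto\omega(y_0)J_\eps(\mathbf{y}_k(y_0),\mathbf{p}_k(y_0),\theta_k)$ are dominated by a constant, and $\mathcal{L}$ is a finite measure; Lebesgue's dominated convergence theorem for Bochner integrals therefore yields convergence of the integral term. Combined with $\tfrac{\gamma_\eps}{2}\|\theta_k\|^2_{\mathcal{R}_\eps}\to\tfrac{\gamma_\eps}{2}\|\theta\|^2_{\mathcal{R}_\eps}$ this gives $\mathcal{J}_\eps(\mathbf{y}_k,\mathbf{p}_k,\theta_k)\to\mathcal{J}_\eps(\mathbf{y},\mathbf{p},\theta)$, whence $\mathcal{J}_\eps(\mathbf{y},\mathbf{p},\theta)=m_\eps$ and $(\mathbf{y},\mathbf{p},\theta)$ is the sought minimizing triplet.

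The genuine difficulty is not the abstract Weierstrass scheme but the compactness and continuity adapted to this nonstandard cost: the constraint $\mathcal{N}^\eps_{ad}$ is defined through nonlinear closed-loop state and adjoint ODEs, so weak convergence alone cannot be passed through the nonlinear terms. This obstruction is precisely what Proposition~\ref{prop:closedofNad} removes by upgrading weak $L^2(Y_0;W_T)$-limits to pointwise strong $W_T$-limits. Consequently the only genuinely new points to check are the a priori bound on $(\mathbf{p}_k)$ needed before extracting its weak limit (for which boundedness rather than convergence of $(\theta_k)$ suffices) and the uniform domination of the integrands required for the Bochner dominated convergence step; both are straightforward given the uniform bounds $\|\mathbf{y}_k\|_{\mathcal{C}}\leq 2M_{Y_0}$ and $\|\mathbf{p}_k\|_{\mathcal{C}}\leq C$ and the $\mathcal{C}^4$-regularity of $V^\eps_\theta$.
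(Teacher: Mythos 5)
Your proposal is correct and follows essentially the same route as the paper's proof: the direct method with a minimizing sequence, boundedness of $(\theta_k)$ from the regularization term and of $(\mathbf{y}_k,\mathbf{p}_k)$ from the admissibility constraint and the adjoint a priori estimate, weak sequential closedness of $\mathcal{N}^\eps_{ad}$ via Proposition~\ref{prop:closedofNad}, and finally pointwise convergence of the augmented cost combined with dominated convergence over $Y_0$. Your observation that the adjoint bound needs only boundedness (not convergence) of $(\theta_k)$ is a small clarification consistent with the paper's appeal to the argument of Lemma~\ref{lem:weakclosedadjoint}.
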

\begin{proof}
According to Theorem~\ref{thm:existenceneuralnetwork2}, the admissible set~$\mathcal{N}^\eps_{ad}$ is nonempty for~$\eps>0$ small enough. Fix such a~$\eps>0$ and let
~$(\mathbf{y}_k,\mathbf{p}_k, \theta_k) \in \mathcal{N}^\eps_{ad}$ denote a minimizing sequence for~$\mathcal{J}_\eps$ i.e.
\begin{align*}
\mathcal{J}_\eps(\mathbf{y}_k,\mathbf{p}_k, \theta_k) \rightarrow \inf_{(\mathbf{y},\mathbf{p}, \theta) \in \mathcal{N}^\eps_{ad}} \mathcal{J}_\eps(\mathbf{y},\mathbf{p}, \theta).
\end{align*}
Since~$\mathbf{y}_k \in \mathbf{Y}_{ad}$ and
$
\frac{\gamma_\eps}{2} \|\theta_k\|^2_{\mathcal{R}_\eps} \leq \mathcal{J}_\eps(\mathbf{y}_k,\mathbf{p}_k, \theta_k),
$ for all $k\in\N$,
the sequence~$\{(\mathbf{y}_k,\theta_k)\}\in L^2(Y_0;W_T)\times \mathcal{R}_\eps$ is bounded. Thus it admits at least one subsequence, denoted by the same index, with
\begin{align*}
(\mathbf{y}_k,\theta_k) \rightharpoonup (\mathbf{y}^*_\eps,\theta^*_\eps) ~\text{in}~L^2(Y_0;W_T) \times \mathcal{R}_\eps
\end{align*}
for some~$(\mathbf{y}^*_\eps,\theta^*_\eps)$. As in the proof of Lemma~\ref{lem:weakclosedadjoint} we verify that ~$\|\mathbf{y}_k\|_{\mathcal{C}} \leq C$ and  ~$\|\mathbf{p}_k\|_{\mathcal{C}} \leq C$ for some~$C>0$ independent of~$k\in\N$. Consequently, by possibly taking another subsequence  we arrive at
\begin{align*}
(\mathbf{y}_k,\mathbf{p}_k,\theta_k) \rightharpoonup (\mathbf{y}^*_\eps,\mathbf{p}^*_\eps,\theta^*_\eps) ~\text{in}~L^2(Y_0;W_T)^2 \times \mathcal{R}_\eps
\end{align*}
for some~$(\mathbf{y}^*_\eps,\mathbf{p}^*_\eps,\theta^*_\eps)\in \mathcal{N}^\eps_{ad}$. For the following estimates it will be convenient  to recall the augmented functional~$J_\eps$, see~\eqref{eq:aux2}, which arises in the running cost of ~\eqref{def:approxfeedprop} in compact form:
\begin{align}\label{eq:aux2a}
J_\eps(y,p,\theta)= J(y, \F^\eps_\theta(y))+ \frac{\gamma_1}{2} \|\mathcal{V}(y)-J_\bullet (y, \F^\eps_{\theta^*_\eps}(y))\|^2_{L^2(I;\R)}+\frac{\gamma_2}{2}\|p-\partial_y \mathcal{V}(y)\|^2_{L^2(I;\R^n)},
\end{align}
where $J_t$ was defined below \eqref{def:valuefunc}.
Now fix an arbitrary~$y_0\in Y_0$ and set
\begin{align*}
y_k \coloneqq \mathbf{y}_k(y_0),~p_k \coloneqq \mathbf{p}_k(y_0),~{y^*} \coloneqq \mathbf{y}^*_\eps (y_0),~p \coloneqq \mathbf{p}^*_\eps (y_0).
\end{align*}
From Lemma~\ref{lem:weakclosedstate} and Lemma~\ref{lem:weakclosedadjoint} we get
\begin{align*}
y_k \rightarrow \tilde y,~p_k \rightarrow p~\text{in}~W_T,~\F^\eps_{\theta_k}(y_k) \rightarrow \F^\eps_{\theta^*_\eps}(\tilde y)~\text{in} ~L^2(I;\R^n)
\end{align*}
and, again using the uniform continuity of~$V_{\bullet}^\eps$ and~$\partial_y V_{\bullet}^\eps$, we conclude
\begin{align*}
\mathcal{V}^\eps_{\theta_k}(y_k) \rightarrow \mathcal{V}^\eps_{\theta^*_\eps}(\tilde y)~\text{in}~L^2(I),~ \partial_y \mathcal{V}^\eps_{\theta_k}(y_k) \rightarrow \partial_y \mathcal{V}^\eps_{\theta^*_\eps}(\tilde y)~\text{in}~L^2(I;\R^n),
\end{align*}
as well as the uniform boundedness of~$\mathcal{V}^\eps_{\theta_k}(\mathbf{y}_k)$ and~$\partial_y \mathcal{V}^\eps_{\theta_k}(\mathbf{y}_k)$ in~$\mathcal{C}(Y_0;L^2(I))$ and $\mathcal{C}(Y_0;L^2(I;\R^n))$, respectively.
Moreover we readily verify that
\begin{align*}
|J_t(y_k, \F^\eps_{\theta_k}(y_k))\!-\!J_t(\tilde y, \F^\eps_{\theta^*_\eps}(\tilde y))|\! \leq\! c\!\left(\|y_k-\tilde y\|_{L^2}+\|\F^\eps_{\theta_k}(y_k)\!-\!\F^\eps_{\theta^*_\eps}(\tilde y)\|_{L^2}+ |y_k(T)- \tilde y(T)| \right),
\end{align*}
for some~$c>0$ independent of~$y_0 \in Y_0$, $t\in (0,T)$,   and~$k\in \N$. Thus we arrive at
\begin{align*}
J_{{\bullet}} (y_k, \F^\eps_{\theta_k}(y_k)) \rightarrow J_\bullet (y, \F^\eps_{\theta^*_\eps}(y))~\text{in}~L^\infty(I).
\end{align*}
Summarizing the previous findings there holds
\begin{align*}
\|\mathcal{V}(y_k)\!-\!J_\bullet (y_k, \F^\eps_{\theta_k}(y_k))\|^2_{L^2}\!+\!\|p_k\!-\!\partial_y \mathcal{V}(y_k)\|^2_{L^2}
&\to \|\mathcal{V}(y)\!-\!J_\bcdot (y, \F^\eps_{\theta^*_\eps}(y))\|^2_{L^2}+\|p\!-\!\partial_y \mathcal{V}(y)\|^2_{L^2}
\\  J(y_k,\F^\eps_{\theta_k}(y_k)) & \rightarrow J(y,\F^\eps_{\theta^*_\eps}(y)).
\end{align*}
Using these expressions  in ~$J_\eps$ as given in  ~\eqref{eq:aux2a},
and the boundedness of~$\|y_k\|_{L^2},|y_k(0)|,$
$\|p_k\|_{L^2}~\|\F^\eps_{\theta_k}(y_k)\|_{L^2},~\|\mathcal{V}^\eps_{\theta_k}(y_k)\|_{L^2}$ independent of~$k \in \N$ and~$y_0 \in Y_0$ we finally get by using Lebesgue's dominated convergence theorem
\begin{align*}
\mathcal{J}_{\eps}(\mathbf{y}_k,\mathbf{p}_k, \theta_k) \rightarrow \mathcal{J}_{\eps}(\mathbf{y}^*_\eps,\mathbf{p}^*_\eps, \theta^*_\eps)= \inf_{(\mathbf{y},\mathbf{p}, \theta) \in \mathcal{N}^\eps_{ad}} \mathcal{J}_\eps(\mathbf{y},\mathbf{p}, \theta).
\end{align*}
\end{proof}
\section{Convergence towards optimal controls} \label{sec:convergence}
In Proposition \ref{thm:existenceneuralnetwork} and \ref{prop:solvofadjoint} it was established that the ensemble triple ~$(\mathbf{y}^*, \mathcal{F}(\mathbf{y^*}) , \mathbf{p}^*)$ can be approximated by ensemble triples  ~$(\mathbf{y}_\eps, \mathcal{F}^\eps_{\theta_\eps}(\mathbf{y}_\eps), \mathbf{p}_\eps)$ in the order $O(\eps)$.
In this section, the convergence of solutions to~\eqref{def:approxfeedprop} as~$\eps \rightarrow 0$ is addressed. We first consider the terms in the definition~$\mathcal{J}_\eps$, see \eqref{eq:aux2}. To obtain the desired asymptotic behavior a smallness condition on the regularisation parameter $\gamma_\epsilon$, in relation to the norm of the parameters $\theta_\eps$ describing the approximation quality, is required.

\begin{theorem} \label{thm:convofobj}
Let Assumptions~\ref{ass:feedbacklaw} and~\ref{ass:approxsmoothness} hold the latter with ~$\theta_\eps \in \mathcal{R}_\eps$, and let~$(\mathbf{y}^*_\eps, \mathbf{p}^*_\eps,\theta^*_\eps)$,  denote an optimal triple to~\eqref{def:approxfeedprop} for all $\eps>0$ small enough. If additionally  $\gamma_\eps \|\theta_\eps\|^2_{\mathcal{R}_\eps} = O(\eps)$, then
\begin{align*}
0 \leq  \int_{Y_0} \omega(y_0) \left \lbrack J(\mathbf{y}^*_\eps(y_0),\mathcal{F}^\eps_{\theta^*_\eps}(\mathbf{y}^*_\eps(y_0))-  V^*(0,y_0) \right \rbrack~\mathrm{d} \mathcal{L}(y_0) \leq c\, \eps
\end{align*}
holds and, if~$\gamma_1,\gamma_2 >0$, we also have
\begin{align*}
\int_{Y_0}\!\! \omega(y_0)  (\| V^\eps_{\theta^*_\eps}(t,\mathbf{y}^*_\eps(y_0))\!-\!J_\bullet (\mathbf{y}^*_\eps(y_0), \F^\eps(\mathbf{y}^*_\eps(y_0)))\|^2_{L^2}&+ \|\partial_y V^\eps_{\theta^*_\eps}(t,\mathbf{y}^*_\eps(y_0))\!-\!\mathbf{p}^*_{\eps}(y_0)\|^2_{L^2}) \mathrm{d} \mathcal{L}(y_0)\\ & \leq c \, \eps
\end{align*}
for some~$c>0$ independent of~$\eps$.
\end{theorem}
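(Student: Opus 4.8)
The plan is to sandwich the ensemble cost $\mathcal{J}_\eps$ between a Bellman lower bound and a matching upper bound obtained by testing the optimality of $(\mathbf{y}^*_\eps,\mathbf{p}^*_\eps,\theta^*_\eps)$ against the admissible competitor $(\mathbf{y}_\eps,\mathbf{p}_\eps,\theta_\eps)\in\mathcal{N}^\eps_{ad}$ furnished by Theorem~\ref{thm:existenceneuralnetwork2}.

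For the lower bound I first note that for each $y_0\in Y_0$ the pair $(\mathbf{y}^*_\eps(y_0),\F^\eps_{\theta^*_\eps}(\mathbf{y}^*_\eps(y_0)))$ is feasible for $(P_{y_0})$, since $\mathbf{y}^*_\eps(y_0)$ solves the closed-loop state equation \eqref{eq:statepropapprox} with initial value $y_0$. By the definition of the value function this gives $J(\mathbf{y}^*_\eps(y_0),\F^\eps_{\theta^*_\eps}(\mathbf{y}^*_\eps(y_0)))\geq V^*(0,y_0)$; multiplying by $\omega>0$ and integrating yields the asserted nonnegativity.

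The crux is the upper bound. Evaluating $J_\eps$ at the competitor, I would show $J_\eps(\mathbf{y}_\eps(y_0),\mathbf{p}_\eps(y_0),\theta_\eps)=V^*(0,y_0)+O(\eps)$ uniformly in $y_0$. The tracking term equals $V^*(0,y_0)+O(\eps)$ because $J(\mathbf{y}^*(y_0),\F^*(\mathbf{y}^*(y_0)))=V^*(0,y_0)$ by (\textbf{A.3}) and \eqref{eq:dynamicalprog}, while the $O(\eps)$ rates of Theorem~\ref{thm:existenceneuralnetwork2} together with local Lipschitz continuity of $J$ bound the difference. For the two penalty integrals I would insert the exact solution and, using the identity $V^*(t,\mathbf{y}^*(t))=J_t(\mathbf{y}^*,\F^*(\mathbf{y}^*))$ from \eqref{eq:dynamicalprog} to cancel the intermediate quantity, telescope to
\begin{align*}
V^\eps_{\theta_\eps}(t,\mathbf{y}_\eps(t))-J_t(\mathbf{y}_\eps,\F^\eps_{\theta_\eps}(\mathbf{y}_\eps))
&=\big[V^\eps_{\theta_\eps}(t,\mathbf{y}_\eps(t))-V^*(t,\mathbf{y}^*(t))\big]\\
&\quad+\big[J_t(\mathbf{y}^*,\F^*(\mathbf{y}^*))-J_t(\mathbf{y}_\eps,\F^\eps_{\theta_\eps}(\mathbf{y}_\eps))\big].
\end{align*}
The first bracket is $O(\eps)$ by the value estimate in \eqref{eq:approxcapa} (comparing $V^\eps_{\theta_\eps}$ with $V^*$ at the argument $\mathbf{y}_\eps(t)$), Lipschitz continuity of $V^*$ in $y$, and $\|\mathbf{y}_\eps-\mathbf{y}^*\|_{\mathcal{C}(Y_0;W_T)}\leq c\eps$; the second is $O(\eps)$ by Lipschitz continuity of $J_t$ and the same convergence rates, the terminal contribution being handled via $W_T\hookrightarrow\mathcal{C}(\bar I;\R^n)$. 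The gradient penalty is treated identically, splitting $\partial_y V^\eps_{\theta_\eps}(t,\mathbf{y}_\eps(t))-\mathbf{p}_\eps(t)$ through $\partial_y V^*(t,\mathbf{y}^*(t))=\mathbf{p}^*(t)$ and invoking the $\partial_y$-estimate in \eqref{eq:approxcapa} and $\|\mathbf{p}_\eps-\mathbf{p}^*\|_{\mathcal{C}(Y_0;W_T)}\leq c\eps$. Both differences are thus $O(\eps)$ uniformly in $t$ and $y_0$, so the squared $L^2$ penalties are $O(\eps^2)$. Adding the regularization and using $\gamma_\eps\|\theta_\eps\|^2_{\mathcal{R}_\eps}=O(\eps)$ gives $\mathcal{J}_\eps(\mathbf{y}_\eps,\mathbf{p}_\eps,\theta_\eps)=\int_{Y_0}\omega(y_0)V^*(0,y_0)\,\de\mathcal{L}(y_0)+O(\eps)$.

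Finally I assemble both claims. Optimality gives $\mathcal{J}_\eps(\mathbf{y}^*_\eps,\mathbf{p}^*_\eps,\theta^*_\eps)\leq\mathcal{J}_\eps(\mathbf{y}_\eps,\mathbf{p}_\eps,\theta_\eps)\leq\int_{Y_0}\omega\, V^*(0,\cdot)\,\de\mathcal{L}+c\eps$. Since the penalty integrals and the regularization term are nonnegative, discarding them leaves $\int_{Y_0}\omega(y_0)J(\mathbf{y}^*_\eps(y_0),\F^\eps_{\theta^*_\eps}(\mathbf{y}^*_\eps(y_0)))\,\de\mathcal{L}(y_0)\leq\int_{Y_0}\omega\, V^*(0,\cdot)\,\de\mathcal{L}+c\eps$, which is the upper bound of the first claim. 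For the second claim I instead keep the penalty integrals and move the tracking term to the right; bounding it below by $\int_{Y_0}\omega\, V^*(0,\cdot)\,\de\mathcal{L}$ via the lower bound cancels the leading value-function term and leaves the weighted sum of the two squared penalties bounded by $c\eps$, after which division by $\min\{\gamma_1,\gamma_2\}>0$ finishes. The main obstacle is the uniform-in-$(t,y_0)$ control of the telescoped penalty differences at order $O(\eps)$, since it simultaneously couples the $C^2$-type bound of Assumption~\ref{ass:approxsmoothness}, the $\mathcal{C}^1(Y_0;W_T)$ and $\mathcal{C}(Y_0;W_T)$ rates of Theorem~\ref{thm:existenceneuralnetwork2}, and the pointwise identities \eqref{eq:dynamicalprog} needed to eliminate the leading order.
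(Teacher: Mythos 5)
Your proposal is correct and follows essentially the same route as the paper's proof: it uses the admissible competitor $(\mathbf{y}_\eps,\mathbf{p}_\eps,\theta_\eps)$ from Theorem~\ref{thm:existenceneuralnetwork2}, bounds the tracking term via $V^*(0,y_0)=J(\mathbf{y}^*(y_0),\F^*(\mathbf{y}^*(y_0)))$ and the $O(\eps)$ rates, bounds the penalty terms by telescoping through the identities \eqref{eq:dynamicalprog} (the paper's $D_1$, $D_2$ splitting), and assembles the claims from optimality, nonnegativity of the penalties, and the assumption $\gamma_\eps\|\theta_\eps\|^2_{\mathcal{R}_\eps}=O(\eps)$.
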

\begin{proof}
Let~$\mathbf{y}_\eps,\mathbf{p}_\eps$ denote the ensembles of state and adjoint trajectories associated to~$\theta_\eps$, see Theorem~\ref{thm:existenceneuralnetwork2}, for~$\eps>0$ small enough. Then we have
\begin{align*}
\big|&  J(\mathbf{y}_\eps(y_0),\mathcal{F}^\eps_{\theta_\eps}(\mathbf{y}_\eps(y_0))-  V^*(0,y_0) \big|
\\&\leq  C \big ( \|\mathbf{y}_\eps(y_0)-\mathbf{y}^*(y_0)\|_{W_T}+  \|\mathcal{F}^\eps_{\theta_\eps}(\mathbf{y}_\eps(y_0))-\mathcal{F}^*(\mathbf{y}^*(y_0))\|_{L^2(I;\R^m)} \big )  \leq c \eps
\end{align*}
for some~$C>0$ independent of~$\eps$.
Here we have used~$V^*(0,y_0)=J(\mathbf{y}^*(y_0),\mathcal{F}^*(\mathbf{y}^*(y_0)))$ for all~$y_0 \in Y_0$, the embedding~$W_T \hookrightarrow \mathcal{C}(\bar{I};\R^n)$ as well as the a priori estimates of Proposition~\ref{thm:existenceneuralnetwork}. Next we utilize~$\mathbf{p}^*(y_0)=\partial \mathcal{V}^*(\mathbf{y}^*(y_0))$,~$y_0\in Y_0$, to estimate
\begin{align*}
\|\partial_y \mathcal{V}^\eps_{\theta_\eps}&(\mathbf{y}_\eps(y_0))-\mathbf{p}_{\eps}(y_0)\|^2_{L^2(I;\R^n)} \\& \leq 2\left( \|\partial_y \mathcal{V}^\eps_{\theta_\eps}(\mathbf{y}_\eps(y_0))-\partial_y \mathcal{V}^*(\mathbf{y}^*(y_0))\|^2_{L^2(I;\R^n)}+\|\mathbf{p}_{\eps}(y_0)-\mathbf{p}^*(y_0)\|^2_{L^2(I;\R^n)}\right) \\&
\leq c\eps^2,
\end{align*}
where the last inequality is deduced from Proposition~\ref{thm:existenceneuralnetwork} and Proposition~\ref{prop:solvofadjoint}. Proceeding analogously and using~$V^*(t,\mathbf{y}^*(y_0)(t))=J_t(\mathbf{y}^*(y_0),\mathcal{F}^*(\mathbf{y}^*(y_0)))$ for all~$y_0 \in Y_0$,~$t\in I$, we obtain
\begin{align*}
\int_{Y_0} \omega(y_0) \int^T_0 |V^\eps_{\theta_\eps}(t,\mathbf{y}_\eps(y_0)(t))-J_t(\mathbf{y}_\eps(y_0),\mathcal{F}^\eps_{\theta_\eps}
(\mathbf{y}_\eps(y_0))|^2~\mathrm{d}t\,\mathrm{d}\mathcal{L}(y_0)\leq D_1+D_2,
\end{align*}
where, using Assumption~\ref{ass:approxsmoothness} and again  Proposition ~\ref{thm:existenceneuralnetwork}
\begin{align*}
D_1 &\coloneqq \int_{Y_0} \omega(y_0) \|\mathcal{V}^\eps_{\theta_\eps}(\mathbf{y}_\eps(y_0))-\mathcal{V}^*(\mathbf{y}^*(y_0))\|^2_{L^2(I)}~\mathrm{d} \mathcal{L}(y_0) \leq c \eps^2,  \\
D_2 & \coloneqq \int_{Y_0}\omega(y_0)\int^T_0 |J_t(\mathbf{y}^*(y_0),\mathcal{F}^*(\mathbf{y}^*(y_0))-J_t(\mathbf{y}_\eps(y_0),\mathcal{F}^\eps_{\theta_\eps}(\mathbf{y}_\eps(y_0))|^2~\mathrm{d}t\mathrm{d}\mathcal{L}(y_0) \leq c \eps^2.
\end{align*}
Combining the previous estimates with the optimality of~$(\mathbf{y}^*,\mathbf{p}^*,\theta^*_\eps)$, and the assumption on the asymptotic behavior of $\gamma_\epsilon$ we deduce that
\begin{align*}
0 &\leq  \int_{Y_0} \omega(y_0)\left \lbrack J_\eps(\mathbf{y}^*_\eps(y_0),\mathbf{p}^*_\eps(y_0),\theta^*_\eps)-  V^*(0,y_0) \right \rbrack~\mathrm{d} \mathcal{L}(y_0) +\frac{\gamma_\eps}{2} \|\theta^*_\eps\|^2_{\mathcal{R}_\eps}
\\ &\leq \int_{Y_0} \omega(y_0) \left \lbrack J_\eps(\mathbf{y}_\eps(y_0),\mathbf{p}_\eps(y_0),\theta_\eps)-  V^*(0,y_0) \right \rbrack~\mathrm{d} \mathcal{L}(y_0) +\frac{\gamma_\eps}{2} \|\theta_\eps\|^2_{\mathcal{R}_\eps}
\leq c\, \eps.
\end{align*}
Recalling the definition of $J_\eps$, this yields all claimed estimates and finishes the proof.
\end{proof}

Next  the convergence  of the ensemble trajectories~$(\mathbf{y}^*_\eps, \mathbf{p}^*_\eps)$, the feedback controls~$\mathcal{F}^\eps_{\theta^*_\eps}(\mathbf{y}^*_\eps)$ as well as the approximate value function~$\mathcal{V}^\eps_{\theta^*_\eps}$ are analyzed. For this purpose we make use of the additional regularity of ensemble solutions to the closed loop system, see Proposition~\ref{thm:aprioriW12}},  and introduce further constraints to~\eqref{def:approxfeedprop}. Without changing the notation  we henceforth set
\begin{align} \label{def:Yadstrict}
\mathbf{\hat Y}_{ad} = \left\{\, \mathbf{y} \in \mathcal{C}^1(Y_0;W_T)\;|\;\Cbochnorm{\mathbf{y}} \leq 2 M_{Y_0},~\|\mathbf{y}\|_{W^{1,2}}\leq 2 M_{W^{1,2}}\, \right\},
\end{align}
where~$M_{W^{1,2}}>0$ is a constant with~$\|\mathbf{y}^*\|_{W^{1,2}} \leq M_{W^{1,2}} $, the function $y^*$ was introduced in \textbf{A.3}, and  $W^{1,2}= \{\mathbf{y} \in L^2(I;W_T): \partial_i \mathbf{y} \in L^2(Y_0;W_T), i\in \{1,\dots,n\} \}$ endowed with the natural norm. Next we note that
\begin{align*}
\frac{\beta}{2} \|\F^*(\mathbf{y}^*(y_0))\|^2_{L^2} \leq J(\mathbf{y}^*(y_0),\F^*(\mathbf{y}^*(y_0)))=V^*(0,y_0)
\end{align*}
for all~$y_0 \in Y_0$. Thus, due to the continuity of the value function~$V^*$, see Assumption~\ref{ass:feedbacklaw} $\mathbf{A.2}$, there is~$M_U >0$ with~$\|\F^*(\mathbf{y}^*)\|_{L^\infty} \leq M_U$. Correspondingly we set
\begin{align} \label{def:Uadstricter}
\mathbf{\hat U}_{ad} = \left\{\, \mathbf{u} \in L^\infty(Y_0;L^2(I;\R^m))\;|\; \|\mathbf{u}\|_{L^\infty}\leq 2 M_U \,\right\}.
\end{align}
We point out that Theorem~\ref{thm:existence} remains valid despite the additional restriction of the set of admissible states and controls. Problem~\eqref{def:approxfeedprop}  with $\mathbf{ Y}_{ad}, \mathbf{ U}_{ad}$ replaced by $\mathbf{\hat Y}_{ad}, \mathbf{\hat U}_{ad}$ will be denoted by $(\mathcal{\hat P}_\epsilon) $.

\begin{prop} \label{prop:existencestrict}
Let Assumption~\ref{ass:feedbacklaw} and~\ref{ass:approxsmoothness} hold.  Then for all~$\eps>0$ small enough, Problem $(\mathcal{\hat P}_\epsilon) $  admits at least one minimizing triple.
\end{prop}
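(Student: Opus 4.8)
The plan is to run the direct method of the calculus of variations exactly as in the proof of Theorem~\ref{thm:existence}, now over the reinforced feasible set
\[
\hat{\mathcal{N}}^\eps_{ad}= \{(\mathbf{y},\mathbf{p},\theta)\in \mathcal{N}^\eps_{ad} : \mathbf{y}\in \mathbf{\hat Y}_{ad},\ \F^\eps_\theta(\mathbf{y})\in \mathbf{\hat U}_{ad}\}\subset \mathcal{N}^\eps_{ad}.
\]
Since the weak lower semicontinuity of $\mathcal{J}_\eps$ along minimizing sequences was already established in the proof of Theorem~\ref{thm:existence} (in fact full convergence was shown there), only two genuinely new points must be checked: that $\hat{\mathcal{N}}^\eps_{ad}$ is nonempty for $\eps$ small, and that it is sequentially weakly closed. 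Everything else transfers verbatim.

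For nonemptiness I would exhibit the family $(\mathbf{y}_\eps,\mathbf{p}_\eps,\theta_\eps)$ from Theorem~\ref{thm:existenceneuralnetwork2}, which already lies in $\mathcal{N}^\eps_{ad}$, and verify the two extra bounds. By Proposition~\ref{thm:aprioriW12} one has $\|\mathbf{y}_\eps-\mathbf{y}^*\|_{\mathcal{C}^1(Y_0;W_T)}\le c\eps$; since $Y_0$ carries the normalized (hence finite) measure $\mathcal{L}$, the embedding $\mathcal{C}^1(Y_0;W_T)\hookrightarrow W^{1,2}$ is continuous, so $\|\mathbf{y}_\eps\|_{W^{1,2}}\le \|\mathbf{y}^*\|_{W^{1,2}}+c\eps\le M_{W^{1,2}}+c\eps\le 2M_{W^{1,2}}$ for $\eps$ small, i.e. $\mathbf{y}_\eps\in\mathbf{\hat Y}_{ad}$. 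Likewise Theorem~\ref{thm:existenceneuralnetwork2} gives $\|\F^\eps_{\theta_\eps}(\mathbf{y}_\eps)-\F^*(\mathbf{y}^*)\|_{\mathcal{C}(Y_0;L^2(I;\R^m))}\le c\eps$, and combined with the bound $\|\F^*(\mathbf{y}^*)\|_{L^\infty(Y_0;L^2(I;\R^m))}\le M_U$ used to define $\mathbf{\hat U}_{ad}$ (and the fact that for continuous functions the $\mathcal{C}(Y_0;\cdot)$ and $L^\infty(Y_0;\cdot)$ norms agree) this yields $\|\F^\eps_{\theta_\eps}(\mathbf{y}_\eps)\|_{L^\infty}\le M_U+c\eps\le 2M_U$, so $\F^\eps_{\theta_\eps}(\mathbf{y}_\eps)\in\mathbf{\hat U}_{ad}$. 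Hence $(\mathbf{y}_\eps,\mathbf{p}_\eps,\theta_\eps)\in\hat{\mathcal{N}}^\eps_{ad}$ for $\eps$ small.

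For closedness I would take a minimizing sequence $(\mathbf{y}_k,\mathbf{p}_k,\theta_k)\subset\hat{\mathcal{N}}^\eps_{ad}$ and extract, exactly as in Theorem~\ref{thm:existence}, a subsequence with $(\mathbf{y}_k,\mathbf{p}_k,\theta_k)\rightharpoonup(\mathbf{y}^*_\eps,\mathbf{p}^*_\eps,\theta^*_\eps)$ in $L^2(Y_0;W_T)^2\times\mathcal{R}_\eps$, together with the pointwise strong convergences $\mathbf{y}_k(y_0)\to\mathbf{y}^*_\eps(y_0)$, $\mathbf{p}_k(y_0)\to\mathbf{p}^*_\eps(y_0)$ in $W_T$ and $\F^\eps_{\theta_k}(\mathbf{y}_k(y_0))\to\F^\eps_{\theta^*_\eps}(\mathbf{y}^*_\eps(y_0))$ in $L^\infty(I;\R^m)$ supplied by Lemmas~\ref{lem:weakclosedstate} and~\ref{lem:weakclosedadjoint}. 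Proposition~\ref{prop:closedofNad} already guarantees that the limit solves the state and adjoint equations \eqref{eq:statepropapprox}--\eqref{eq:constraintprop} and lies in $\mathcal{N}^\eps_{ad}$, so it remains to pass the two reinforced constraints to the limit. The control bound is immediate from the pointwise $L^2(I;\R^m)$ convergence of the feedbacks, since $\|\F^\eps_{\theta^*_\eps}(\mathbf{y}^*_\eps(y_0))\|_{L^2}=\lim_k\|\F^\eps_{\theta_k}(\mathbf{y}_k(y_0))\|_{L^2}\le 2M_U$ for every $y_0$, whence $\F^\eps_{\theta^*_\eps}(\mathbf{y}^*_\eps)\in\mathbf{\hat U}_{ad}$.

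The point requiring the most care is the membership $\mathbf{y}^*_\eps\in\mathbf{\hat Y}_{ad}$, because this set lives in $\mathcal{C}^1(Y_0;W_T)$ and carries a $W^{1,2}$ bound, neither of which is handed to us directly by the weak $L^2(Y_0;W_T)$ convergence. The $\mathcal{C}^1$-regularity is, however, automatic: once $\mathbf{y}^*_\eps(\cdot)$ solves the closed-loop state equation for the \emph{fixed} parameter $\theta^*_\eps$, the implicit-function/smooth-dependence argument of Remark~\ref{rem3} applies (the vector field $\mathbf{f}+\mathbf{g}\,\F^\eps_{\theta^*_\eps}$ being $\mathcal{C}^1$ in $y$), giving $y_0\mapsto\mathbf{y}^*_\eps(y_0)$ of class $\mathcal{C}^1$. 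For the norm bound I would use that $\{\mathbf{y}_k\}$ is bounded in the Hilbert space $W^{1,2}$, extract a further subsequence converging weakly there, note that this weak limit must coincide with $\mathbf{y}^*_\eps$ (both equal the $L^2(Y_0;W_T)$ limit), and invoke weak lower semicontinuity of the norm to obtain $\|\mathbf{y}^*_\eps\|_{W^{1,2}}\le\liminf_k\|\mathbf{y}_k\|_{W^{1,2}}\le 2M_{W^{1,2}}$. Thus $\mathbf{y}^*_\eps\in\mathbf{\hat Y}_{ad}$, so $(\mathbf{y}^*_\eps,\mathbf{p}^*_\eps,\theta^*_\eps)\in\hat{\mathcal{N}}^\eps_{ad}$, and the weak lower semicontinuity of $\mathcal{J}_\eps$ established in Theorem~\ref{thm:existence} shows this limit is a minimizer of $(\mathcal{\hat P}_\eps)$.
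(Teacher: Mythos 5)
Your proof is correct and follows essentially the same route as the paper: nonemptiness of the reinforced admissible set is witnessed by the triple $(\mathbf{y}_\eps,\mathbf{p}_\eps,\theta_\eps)$ from Theorem~\ref{thm:existenceneuralnetwork2} (via Propositions~\ref{thm:existenceneuralnetwork} and~\ref{thm:aprioriW12}), and existence then follows by rerunning the direct method of Theorem~\ref{thm:existence} once the reinforced set is shown to be weakly closed. Your explicit verification that the two extra constraints survive the passage to the limit (norm convergence of the feedbacks for the control bound, smooth dependence on initial data for the $\mathcal{C}^1$ regularity, and weak lower semicontinuity of the $W^{1,2}$ norm) supplies precisely the details that the paper's closedness assertion leaves implicit.
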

\begin{proof}
Let~$(\mathbf{y}_\eps,\mathbf{p}_\eps, \theta_\eps)$ be defined as in Theorem~\ref{thm:existenceneuralnetwork2}. Then we have~$\mathbf{y}_\eps \in \mathbf{\hat Y}_{ad}$, see Proposition~\ref{thm:existenceneuralnetwork} and Proposition~\ref{thm:aprioriW12}, as well as~$\mathcal{F}^\eps_{\theta_\eps}(\mathbf{y}_\eps) \in \mathbf{\hat U}_{ad}$, according to Proposition~\ref{thm:existenceneuralnetwork}, for all~$\eps>0$ small enough. Hence the admissible set of~ $(\mathcal{\hat P}_\epsilon) $  is not empty. The existence of a minimizing triple then follows by repeating the arguments of the proof of Theorem~\ref{thm:existence} noting that the admissible set
\begin{align*}
\left\{(\mathbf{y},\mathbf{p},\theta)\!\in\! \mathbf{\hat Y}_{ad}\!\times\!\mathcal{C}(Y_0;W_T) \!\times\! \mathcal{R}_{\eps}|(\mathbf{y},\mathbf{p},\theta)~\text{satisfies}\!~\eqref{eq:statepropapprox}\!
-\!\eqref{eq:constraintprop}, \F^\eps_{\theta}(\mathbf{y})\!\in\! \mathbf{\hat U}_{ad}\right\}
\end{align*}
is closed w.r.t to the weak topology on~$L^2(Y_0;W_T)^2 \times \mathcal{R}_\eps$.
\end{proof}

Let us next address the convergence  of the optimal ensemble states~$\mathbf{y}^*_\eps$, adjoint states~$\mathbf{p}_\eps$ and the associated feedback controls~$\F^\eps_{\theta^*_\eps}(\mathbf{y}^*_\eps)$ as $\eps$ tends to 0.
\begin{theorem} \label{thm:convoftraj}
Let the prerequisites of Theorem~\ref{thm:convofobj} hold, and let $\eps_k >0$ be a strictly decreasing null sequence such that $(\mathcal{\hat P}_{\eps_k})$ admits a minimizing triple $(\mathbf{y}^*_k, \mathbf{p}^*_k, \theta^*_k)$.
 Then $(\mathbf{y}^*_{k}, \mathbf{p}^*_{k}, \mathcal{F}^{\eps_k}_{\theta^*_{\eps_k}}(\mathbf{y}^*_{\eps_k}))$ contains at least one  accumulation point~$(\bar{\mathbf{y}}, \bar{\mathbf{p}}, \bar{\mathbf{u}}) \in L^\infty(Y_0; W_T)^2 \times L^\infty(Y_0; L^2(I;\R^m))$ w.r.t the strong topology on~$L^2(Y_0;W_T)^2 \times L^2(Y_0;L^2(I;\R^m))$. For each  accumulation point and~$\mathcal{L}$-a.e.~$y_0 \in Y_0$ we have that~$(\bar{y}, \bar{p}, \bar{u}) \coloneqq (\bar{\mathbf{y}}(y_0),\bar{\mathbf{p}}(y_0),\bar{\mathbf{u}}(y_0))$ satisfies
\begin{align*}
(\bar{y}, \bar{u}) \in \min \eqref{def:openloopproblem}
\end{align*}
as well as
\begin{align*}
\dot{\bar{y}}&=\mathbf{f}(\bar{y})+\mathbf{g}(\bar{y})\bar{u},~ \bar{y}(0)=y_0, \\
-\dot{\bar{p}}&= D\mathbf{f}(\bar{y})^\top \bar{p}+\lbrack D \mathbf{g}(\bar{y})^\top\bar{u})\rbrack \bar{p}+\mathbf{Q}_1^\top \mathbf{Q}_1(\bar{y}-y_d),~ \bar{p}(T)=Q^\top_2 Q_2(\bar{y}(T)-y^T_d).
\end{align*}
\end{theorem}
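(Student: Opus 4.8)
The plan is to treat the sequence of feedback controls $\mathbf{u}_k \coloneqq \F^{\eps_k}_{\theta^*_k}(\mathbf{y}^*_k)$ as a bounded family of \emph{open loop} controls and to pass to the limit in the state/adjoint system directly, rather than in the feedback law. The latter is impossible, because the smallness hypothesis $\gamma_{\eps_k}\|\theta^*_k\|^2_{\mathcal{R}_{\eps_k}}=O(\eps_k)$ yields no bound on $\theta^*_k$ and hence no convergence of $\F^{\eps_k}_{\theta^*_k}$; accordingly the whole argument must avoid the feedback structure and exploit only the $L^\infty$-bound on the controls together with compactness of the states. This is exactly why the enhanced admissible sets $\mathbf{\hat Y}_{ad}$ and $\mathbf{\hat U}_{ad}$ from \eqref{def:Yadstrict} were introduced.

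First I would collect uniform bounds and extract limits. By definition of $\mathbf{\hat Y}_{ad}$, $\mathbf{\hat U}_{ad}$ the sequences $\{\mathbf{y}^*_k\}$, $\{\mathbf{u}_k\}$ are bounded in $W^{1,2}\cap\mathcal{C}(Y_0;W_T)$ and in $L^\infty(Y_0;L^2(I;\R^m))$, while the adjoints $\{\mathbf{p}^*_k\}$ are bounded in $\mathcal{C}(Y_0;W_T)$ by the linear estimate of Proposition~\ref{prop:s} applied to the time-reversed \eqref{eq:adjointpropapprox} (the coefficient $D\mathbf{f}^\top+D\mathbf{g}^\top\mathbf{u}_k$ being uniformly bounded precisely because $\mathbf{u}_k\in\mathbf{\hat U}_{ad}$). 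Along a subsequence I obtain weak limits $\bar{\mathbf{y}},\bar{\mathbf{p}}$ in $L^2(Y_0;W_T)$ and $\bar{\mathbf{u}}$ in $L^2(Y_0;L^2(I;\R^m))$, which inherit the $L^\infty$-bounds, so $(\bar{\mathbf{y}},\bar{\mathbf{p}})\in L^\infty(Y_0;W_T)^2$ and $\bar{\mathbf{u}}\in\mathbf{\hat U}_{ad}$. The crucial ingredient is the compact Sobolev--Bochner embedding $W^{1,2}\hookrightarrow\hookrightarrow L^2(Y_0;W_T)$, which upgrades the states to $\mathbf{y}^*_k\to\bar{\mathbf{y}}$ strongly in $L^2(Y_0;W_T)$, hence, via $W_T\hookrightarrow\hookrightarrow\mathcal{C}(\bar I;\R^n)$, strongly in $L^2(Y_0;\mathcal{C}(\bar I;\R^n))$.

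Next I would pass to the limit in the equations, exactly as in Lemma~\ref{lem:weakclosedstate} and Lemma~\ref{lem:weakclosedadjoint} but without convergence of $\theta$. Writing $\mathbf{g}(\mathbf{y}^*_k)\mathbf{u}_k-\mathbf{g}(\bar{\mathbf{y}})\bar{\mathbf{u}}=(\mathbf{g}(\mathbf{y}^*_k)-\mathbf{g}(\bar{\mathbf{y}}))\mathbf{u}_k+\mathbf{g}(\bar{\mathbf{y}})(\mathbf{u}_k-\bar{\mathbf{u}})$, the first term tends to $0$ strongly (strong state convergence times the bounded $\mathbf{u}_k$) and the second tends to $0$ weakly (fixed bounded multiplier times a weakly null sequence); together with $\mathbf{f}(\mathbf{y}^*_k)\to\mathbf{f}(\bar{\mathbf{y}})$ and $\dot{\mathbf{y}}^*_k\rightharpoonup\dot{\bar{\mathbf{y}}}$ this gives the limiting state equation $\dot{\bar y}=\mathbf{f}(\bar y)+\mathbf{g}(\bar y)\bar u$, $\bar y(0)=y_0$, for $\mathcal{L}$-a.e.\ $y_0$. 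The adjoint equation is linear in $p$ with coefficients $D\mathbf{f}(\mathbf{y}^*_k)^\top+D\mathbf{g}(\mathbf{y}^*_k)^\top\mathbf{u}_k$ and source $\mathbf{Q}_1^\top\mathbf{Q}_1(\mathbf{y}^*_k-y_d)$ converging strongly, so $\bar{\mathbf{p}}$ solves the stated adjoint equation and, by uniqueness of that linear problem, $\mathbf{p}^*_k(y_0)\to\bar{\mathbf{p}}(y_0)$ in $W_T$.

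For optimality I would invoke Theorem~\ref{thm:convofobj}, whose first estimate (valid for $(\mathcal{\hat P}_{\eps_k})$ by Proposition~\ref{prop:existencestrict}) gives $\int_{Y_0}\omega(y_0)\,[J(\mathbf{y}^*_k(y_0),\mathbf{u}_k(y_0))-V^*(0,y_0)]\,\de\mathcal{L}\to 0$. Since $J$ is jointly convex and continuous, the ensemble functional $(\mathbf{y},\mathbf{u})\mapsto\int_{Y_0}\omega\,J(\mathbf{y}(y_0),\mathbf{u}(y_0))\,\de\mathcal{L}$ is weakly lower semicontinuous, whence $\int_{Y_0}\omega\,J(\bar{\mathbf{y}},\bar{\mathbf{u}})\le\liminf_k\int_{Y_0}\omega\,J(\mathbf{y}^*_k,\mathbf{u}_k)=\int_{Y_0}\omega\,V^*(0,\cdot)$; feasibility of $(\bar y,\bar u)$ for $(P_{y_0})$ gives $J(\bar{\mathbf{y}}(y_0),\bar{\mathbf{u}}(y_0))\ge V^*(0,y_0)$ pointwise, and since $\omega\ge c>0$ these combine to force $J(\bar{\mathbf{y}}(y_0),\bar{\mathbf{u}}(y_0))=V^*(0,y_0)$ for a.e.\ $y_0$, i.e.\ $(\bar y,\bar u)\in\min\eqref{def:openloopproblem}$. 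Finally I would upgrade the controls to strong convergence: strong state convergence makes the misfit part of the objective converge, so the identity $\int_{Y_0}\omega\,J(\mathbf{y}^*_k,\mathbf{u}_k)\to\int_{Y_0}\omega\,J(\bar{\mathbf{y}},\bar{\mathbf{u}})$ forces $\tfrac{\beta}{2}\|\sqrt{\omega}\,\mathbf{u}_k\|^2_{L^2(Y_0;L^2(I;\R^m))}\to\tfrac{\beta}{2}\|\sqrt{\omega}\,\bar{\mathbf{u}}\|^2$; norm convergence together with weak convergence in the Hilbert space $L^2(Y_0;L^2(I;\R^m))$ yields $\sqrt{\omega}\,\mathbf{u}_k\to\sqrt{\omega}\,\bar{\mathbf{u}}$ strongly, and $\omega\ge c>0$ gives $\mathbf{u}_k\to\bar{\mathbf{u}}$ strongly. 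The adjoints (and states) then converge strongly by the Lipschitz dependence of the respective linear ODEs on their data. The main obstacle, as noted, is the absence of any bound on $\theta^*_k$: the limit $\bar u$ is therefore an open loop optimal control, not representable through a single feedback law, and the product $\mathbf{g}(\mathbf{y}^*_k)\mathbf{u}_k$ can only be passed to the limit because the $W^{1,2}$-constraint converts the weak convergence of the controls against the strongly convergent nonlinear coefficient into a convergent product.
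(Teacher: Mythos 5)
Your overall skeleton coincides with the paper's: uniform bounds from $\mathbf{\hat Y}_{ad}$ and $\mathbf{\hat U}_{ad}$, Banach--Alaoglu, a compactness step for the states, limit passage in the state equation by splitting $\mathbf{g}(\mathbf{y}^*_k)\mathbf{u}_k$ into $(\mathbf{g}(\mathbf{y}^*_k)-\mathbf{g}(\bar{\mathbf{y}}))\mathbf{u}_k+\mathbf{g}(\bar{\mathbf{y}})(\mathbf{u}_k-\bar{\mathbf{u}})$, optimality via Theorem~\ref{thm:convofobj} plus feasibility, and the norm-plus-weak-convergence trick for strong convergence of the controls. However, two of your steps fail as written. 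First, your self-declared ``crucial ingredient'', the compact embedding $W^{1,2}\hookrightarrow\hookrightarrow L^2(Y_0;W_T)$, is false: for an infinite-dimensional range space $X$ the embedding $H^1(Y_0;X)\hookrightarrow L^2(Y_0;X)$ is never compact (take $\mathbf{y}_k(y_0)=\phi(y_0)e_k$ with $\phi$ smooth and $\{e_k\}$ orthonormal in $X$; this is bounded in $H^1(Y_0;X)$ but has no subsequence converging in $L^2(Y_0;X)$). Vector-valued Rellich/Aubin--Lions theorems require the range embedding itself to be compact, which is exactly how the paper proceeds: it invokes the compact embedding of $W^{1,2}(Y_0;W_T)$ into $L^2(Y_0;\mathcal{C}(I;\R^n))$ (using that $W_T\hookrightarrow\mathcal{C}(\bar I;\R^n)$ is compact), so at this stage one only gets $\mathbf{y}^*_k\to\bar{\mathbf{y}}$ strongly in $L^2(Y_0;\mathcal{C}(I;\R^n))$; the strong $L^2(Y_0;W_T)$ convergence of the states is recovered only at the end, after the controls converge strongly. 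This error of yours is repairable, since every place where you use state convergence (Lipschitz continuity of $\mathbf{g}$, $D\mathbf{f}$, $D\mathbf{g}$, the tracking terms of $J$, the terminal values) in fact only needs the $\mathcal{C}(\bar I;\R^n)$-norm on the range, but the statement you rely on is wrong as formulated.

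Second, your order of argument for the adjoint equation is broken. You pass to the limit in the adjoint equation together with the state equation, claiming that the coefficient $D\mathbf{f}(\mathbf{y}^*_k)^\top+D\mathbf{g}(\mathbf{y}^*_k)^\top\mathbf{u}_k$ converges strongly. At that point of your argument the controls $\mathbf{u}_k$ converge only \emph{weakly}, so $D\mathbf{g}(\mathbf{y}^*_k)^\top\mathbf{u}_k$ converges only weakly, and the term $\lbrack D\mathbf{g}(\mathbf{y}^*_k)^\top\mathbf{u}_k\rbrack\mathbf{p}^*_k$ is a product of two merely weakly convergent sequences whose limit you cannot identify. This is precisely why the paper first establishes strong convergence of $\F^{\eps_k}_{\theta^*_k}(\mathbf{y}^*_k)$ in $L^2(Y_0;L^2(I;\R^m))$ (via the convergence of the objective values) and only afterwards treats the adjoint equation, testing the bilinear term against $\varphi\in L^2(Y_0;L^2(I;\R^n))$ and exploiting the strong convergence of both the states and the controls before concluding with Gronwall. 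Your proof becomes correct if you postpone the adjoint limit until after your strong-convergence paragraph; the final Lipschitz/Gronwall argument you sketch then goes through unchanged.
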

\begin{proof}
By choice of the admissible sets~$\mathbf{\hat Y}_{ad}$ and~$\mathbf{\hat U}_{ad}$ we have that~$\{(\mathbf{y}^*_k, \F^\eps_{\theta^*_k}(\mathbf{y}^*_k))\}_{k=1}^{\infty} $ is bounded in~$(W^{1,2}(Y_0;W_T) \cap L^\infty(Y_0;W_T))\times L^\infty(Y_0;L^2(I;\R^m))$. By Gronwall's inequality we can argue that ~$\{\mathbf{p}^*_k\}_{k=1}^\infty$  is also  bounded in ~$L^\infty(Y_0;W_T)$. Thus, due to the Banach-Alaoglu theorem, there is a subsequence, denoted by the same index, and
~$(\bar{\mathbf{y}},\bar{\mathbf{p}},\bar{\mathbf{u}})\in L^\infty(Y_0;W_T)^2 \times L^\infty(Y_0;L^2(I;\R^m))$ such that
\begin{align*}
(\mathbf{y}^*_k, \mathbf{p}^*_k, \F^{\eps_k}_{\theta^*_k}(\mathbf{y}^*_k)) \rightharpoonup^* (\bar{\mathbf{y}},\bar{\mathbf{p}},\bar{\mathbf{u}})~\text{in}~L^\infty(Y_0;W_T)^2 \times L^\infty(Y_0;L^2(I;\R^m)),
\end{align*}
and $\dot{\mathbf{y}}^*_k \rightharpoonup \dot{\bar{\mathbf{y}}}$ in $L^2(Y_0;L^2(I;\R^n))$. By the compact embedding of $W^{1,2}(Y_0;W_T)$ into $L^2(Y_0;\mathcal{C}(I;\R^n))$, see \cite[Theorem 5.3]{AK18}  the subsequence can be chosen such that $\mathbf{y}^*_k \to  \bar{\mathbf{y}}$ strongly in $L^2(Y_0;\mathcal{C}(I;\R^n))$.
These properties imply that~$(\bar{y},\bar{u})\coloneqq (\bar{\mathbf{y}}(y_0),\bar{\mathbf{u}}(y_0))$ satisfies
\begin{equation}\label{eq:aux9}
\dot{\bar{y}}=\mathbf{f}(\bar{y})+\mathbf{g}(\bar{y})\bar{u},~ \bar{y}(0)=y_0,
\end{equation}
for~$\mathcal{L}$-a.e.~$y_0 \in Y_0$. This also implies~$V^*(0,y_0) \leq J(\bar{y},\bar{u}) $ and thus, together with
\begin{align*}
J(\mathbf{y}^*_k, \F^{\eps_k}_{\theta^*_k}(\mathbf{y}^*_k)) \rightarrow V^*(0,\cdot)~\text{in}~L^1(Y_0;W_T), \\
\end{align*}
see Theorem~\ref{thm:convofobj},
we have~$(\bar{y},\bar{u}) \in \argmin \eqref{def:openloopproblem}$ for~$\mathcal{L}$-a.e.~$y_0 \in Y_0$. Moreover, again using the strong convergence of $\mathbf{y}^*_k$ in $L^2(Y_0;\mathcal{C}(I;\R^n))$ and recalling the definition of~$J(\cdot,\cdot)$ as
\begin{align*}
J(y,u)= (1/2) \|\mathbf{Q}_1(y-y_d)\|^2_{L^2(I;\R^n)}+(\beta/2) \|u\|^2_{L^2(I;\R^n)}+(1/2) |{Q}_2(y(T)-y^T_d)|^2,
\end{align*}
for all~$y \in W_T,~u \in L^2(I;\R^m)$,
we also conclude the convergence of the~$L^2(Y_0;L^2(I;\R^m))$ norm of~$\F^{\eps_k}_{\theta^*_k}(\mathbf{y}^*_k)$ towards the norm of~$\bar{\mathbf{u}}$. Thus~$\F^{\eps_k}_{\theta^*_k}(\mathbf{y}^*_k) \rightarrow \bar{\mathbf{u}}$ strongly in~$L^2(Y_0;L^2(I;\R^m))$, and  $\mathbf{y}^*_k \to  \bar{\mathbf{y}}$ strongly in $L^2(Y_0;W_T)$, by Lebegue's bounded convergence theorem.

It remains to address the strong convergence of~$\mathbf{p}_k$. For this purpose we show that the functions~$\lbrack D\mathbf{g}(\mathbf{y}^*_k)^\top \F^{\eps_k}_{\theta^*_k}(\mathbf{y}^*_k(\cdot))\rbrack \mathbf{p}^*_k(\cdot) $ converge weakly to~$\lbrack D\mathbf{g}(\bar{\mathbf{y}}(\cdot))^\top \bar{\mathbf{u}}(\cdot)\rbrack \bar{\mathbf{p}}(\cdot) $ in~$L^2(Y_0;L^2(I;\R^n))$. Fixing a test function~$\varphi \in L^2(Y_0;L^2(I;\R^n))$ we first note that
\begin{align*}
\lim_{k\rightarrow \infty}\big(\varphi&, \lbrack D\mathbf{g}(\bar{\mathbf{y}}(\cdot))^\top \bar{\mathbf{u}}(\cdot))\rbrack (\mathbf{p}^*_k(\cdot)-\bar{\mathbf{p}}) \big) _{L^2(Y_0;L^2(I;\R^n))}=0.
\end{align*}
Second, for~$\mathcal{L}$-a.e.~$y_0 \in Y_0$ we estimate
\begin{align*}
&\big(\varphi(y_0),\lbrack D\mathbf{g}(\mathbf{y}^*_k(y_0))^\top \F^{\eps_k}_{\theta^*_k}(\mathbf{y}^*_k(y_0))-D\mathbf{g}(\bar{\mathbf{y}}(y_0))^\top \bar{\mathbf{u}}(y_0)\rbrack \mathbf{p}^*_k(y_0)\big)_{L^2(I;\R^n)} \\
&\leq \! C \|\varphi(y_0)\|_{L^2} \wnorm{\mathbf{p}^*_k(y_0)}\! \left( \|\F^{\eps_k}_{\theta^*_k}(\mathbf{y}^*_k(y_0))\|_{L^2}\wnorm{\mathbf{y}^*_k(y_0)\!-\!\bar{\mathbf{y}}(y_0)}\!+
\!\|\F^{\eps_k}_{\theta^*_k}(\mathbf{y}^*_k(y_0))\!-\!\bar{\mathbf{u}}(y_0)\|_{L^2}\!\right) \\
& \leq \!
C \|\varphi(y_0)\|_{L^2} \left( \wnorm{\mathbf{y}^*_k(y_0)-\bar{\mathbf{y}}(y_0)}+\|\F^{\eps_k}_{\theta^*_k}(\mathbf{y}^*_k(y_0))-\bar{\mathbf{u}}(y_0)\|_{L^2}\right)
\end{align*}
for some~$C>0$ independent of~$k\in\N$ and~$y_0$. Here we made use of the boundedness of~$\{\mathbf{y}^*_k\}_{k=1}^\infty$ and~$\{\mathbf{p}^*_k\}_{k=1}^\infty$ in~$L^\infty(Y_0;W_T)$, and of~$\{\F^{\eps_k}_{\theta^*_k}(\mathbf{y}^*_k)\}_{k=1}^\infty$ in~$L^\infty(Y_0;L^2(I;\R^m))$. Integrating both sides of the inequality w.r.t to~$\mathcal{L}$ and utilizing the strong convergence of~$\mathbf{y}^*_k$ and~$\F^{\eps_k}_{\theta^*_k}(\mathbf{y}^*_k)$ we finally arrive at
\begin{align*}
\lim_{k \rightarrow \infty}\big(\varphi,\lbrack D\mathbf{g}(\mathbf{y}^*_k(\cdot))^\top \F^{\eps_k}_{\theta^*_k}(\mathbf{y}^*_k(\cdot))-D\mathbf{g}(\bar{\mathbf{y}}(\cdot))^\top \bar{\mathbf{u}}(\cdot)\rbrack \mathbf{p}^*_k(\cdot)\big)_{L^2(Y_0,L^2(I;\R^n))}=0.
\end{align*}
By repeating this argument for the different terms appearing in the adjoint equation we get that~$(\bar{y}, \bar{p}, \bar{u}) \coloneqq (\bar{\mathbf{y}}(y_0),\bar{\mathbf{p}}(y_0),\bar{\mathbf{u}}(y_0))$ satisfies
\begin{align*}
-\dot{\bar{p}}&= D\mathbf{f}(\bar{y})^\top \bar{p}+\lbrack D \mathbf{g}(\bar{y})^\top \bar{u}\rbrack \bar{p}+\mathbf{Q}_1^\top \mathbf{Q}_1(\bar{y}-y_d),~ \bar{p}(T)= Q^\top_2 Q_2(\bar{y}(T)-y^T_d)
\end{align*}
for~$\mathcal{L}$-a.e.~$y_0 \in Y_0$. Applying Gronwall's inequality we deduce
\begin{align*}
\wnorm{\mathbf{p}_k(y_0)-\bar{\mathbf{p}}(y_0)} \leq C \left( \wnorm{\mathbf{y}^*_k(y_0)-\bar{\mathbf{y}}(y_0)}+\|\F^{\eps_k}_{\theta^*_k}(\mathbf{y}^*_k(y_0))-\bar{\mathbf{u}}(y_0)\|_{L^2} \right)
\end{align*}
for~$\mathcal{L}$-a.e.~$y_0 \in Y_0$ and~$C>0$ independent of~$y_0$ and~$k$. This yields~$\mathbf{p}_k \rightarrow \bar{\mathbf{p}}$ strongly in~$L^2(Y_0;W_T)$. Since the weakly convergent subsequence was chosen arbitrarily in the beginning, this  finishes the proof.
\end{proof}
\begin{remark}
If~$\mathbf{g}(y(t))=B \in \R^{m \times n}$ then the statement of the previous theorem also holds~\text{without} constraints on the control (i.e. for~$\mathbf{U}_{ad}=L^2(Y_0;L^2(I;\R^m))$). In this particular case, the  uniform boundedness of~$\F^{\eps_k}_{\theta^*_k}(\mathbf{y}^*_k)$ in~$L^2(Y_0;L^2(I;\R^m))$ follows from
\begin{align*}
\frac{\beta}{2} \|\F^{\eps_k}_{\theta^*_k}(\mathbf{y}^*_k)\|^2_{L^2} \leq c \int_{Y_0} \omega(y_0) J(\mathbf{y}^*_k,\F^{\eps_k}_{\theta^*_k}(\mathbf{y}^*_k(y_0)))~\mathrm{d}\mathcal{L}(y_0) \leq C,
\end{align*}
see Theorem~\ref{thm:convofobj}. Moreover the adjoint equation does no longer depend on the control. Repeating the arguments of the last proof yields the subsequential convergence of~$(\mathbf{y}^*_k,\mathbf{p}^*_k,\F^{\eps_k}_{\theta^*_k}(\mathbf{y}^*_k))$ towards an element~$(\bar{\mathbf{y}},\bar{\mathbf{p}},\bar{\mathbf{u}})\in L^\infty(Y_0;W_T)^2 \times L^2(Y_0;L^2(I;\R^m))$ such that~$(\bar{y},\bar{p},\bar{u})\coloneqq(\bar{\mathbf{y}}(y_0),\bar{\mathbf{p}}(y_0),\bar{\mathbf{u}}(y_0))$ satisfy the system of state and adjoint equations as well as~$(\bar{y},\bar{u})\in \argmin \eqref{def:openloopproblem}$ for~$\mathcal{L}$-a.e.~$y_0 \in Y_0$. Then it only remains to argue the additional regularity~$\mathbf{u}\in L^\infty(Y_0;L^2(Y_0;W_T))$. This is, however, a direct consequence of the first order necessary optimality condition~$\bar{\mathbf{u}}=(-1/\beta)B^\top\bar{p}$ for~\eqref{def:openloopproblem}, see Proposition~\ref{prop:structure}.
\end{remark}
We point out that the statement of Theorem~\ref{thm:convoftraj} holds independently of the values of the penalty parameters~$\gamma_1,\gamma_2$. If~$\gamma_1,\gamma_2 >0$ then we additionally obtain the following convergence results for the approximate value function~$\mathcal{V}^\eps_{\theta^*_k}$ and its derivative~$\partial_y \mathcal{V}^\eps_{\theta^*_k}$ along optimal state trajectories.
\begin{prop} \label{prop18}
Let the prerequisites of Theorem~\ref{thm:convofobj} hold and
let~$(\mathbf{y}^*_k, \mathbf{p}^*_k, \theta^*_k)$ denote a sequence of minimizing triplets as described in Theorem~\ref{thm:convoftraj}. Assume that~$(\mathbf{y}^*_{k}, \mathbf{p}^*_{k}, \F^{\eps_k}_{\theta^*_k}(\mathbf{y}^*_k))$ converges to~$(\bar{\mathbf{y}},\bar{\mathbf{p}}, \bar{\mathbf{u}})$ in~$L^2(Y_0;W_T)^2 \times L^2(Y_0; L^2(I;\R^m))$ and~$\gamma_1,\gamma_2 >0$. Then we also have
\begin{align*}
\mathcal{V}^{\eps_k}_{\theta^*_k}(\mathbf{y}^*_k) \rightarrow \mathcal{V}^*(\bar{\mathbf{y}})~\text{in}~L^2(Y_0;L^2(I)),~\partial_y \mathcal{V}^{\eps_k}_{\theta^*_k}(\mathbf{y}^*_k) \rightarrow \bar{\mathbf{p}}~\text{in}~L^2(Y_0;L^2(I;\R^n)).
\end{align*}
\end{prop}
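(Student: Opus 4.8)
The plan is to read off both convergences as essentially immediate corollaries of Theorem~\ref{thm:convofobj} and Theorem~\ref{thm:convoftraj}, so I will only indicate how the pieces fit together. The key observation is that, since $\omega \ge c > 0$ $\mathcal{L}$-a.e.\ on $Y_0$, the two penalty bounds of Theorem~\ref{thm:convofobj} (available precisely because $\gamma_1,\gamma_2>0$) control the corresponding \emph{unweighted} $L^2(Y_0;L^2(I))$-norms.

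I would first treat the gradient, which is the easier assertion. From the second estimate of Theorem~\ref{thm:convofobj} one gets
\begin{align*}
\|\partial_y \mathcal{V}^{\eps_k}_{\theta^*_k}(\mathbf{y}^*_k)-\mathbf{p}^*_k\|_{L^2(Y_0;L^2(I;\R^n))}\longrightarrow 0 .
\end{align*}
Since, by Theorem~\ref{thm:convoftraj}, $\mathbf{p}^*_k\to\bar{\mathbf{p}}$ strongly in $L^2(Y_0;W_T)$ and $W_T\hookrightarrow L^2(I;\R^n)$, the triangle inequality then yields $\partial_y \mathcal{V}^{\eps_k}_{\theta^*_k}(\mathbf{y}^*_k)\to\bar{\mathbf{p}}$ in $L^2(Y_0;L^2(I;\R^n))$, which is the second claim.

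For the value function I would proceed analogously. The first estimate of Theorem~\ref{thm:convofobj} gives
\begin{align*}
\|\mathcal{V}^{\eps_k}_{\theta^*_k}(\mathbf{y}^*_k)-J_{\bullet}(\mathbf{y}^*_k,\F^{\eps_k}_{\theta^*_k}(\mathbf{y}^*_k))\|_{L^2(Y_0;L^2(I))}\longrightarrow 0 ,
\end{align*}
so it suffices to show $J_{\bullet}(\mathbf{y}^*_k,\F^{\eps_k}_{\theta^*_k}(\mathbf{y}^*_k))\to\mathcal{V}^*(\bar{\mathbf{y}})$ in $L^2(Y_0;L^2(I))$. Setting $(\bar y,\bar u)=(\bar{\mathbf{y}}(y_0),\bar{\mathbf{u}}(y_0))$, I would use the elementary Lipschitz estimate for $J_t$ already employed in the proof of Theorem~\ref{thm:existence}, namely
\begin{align*}
|J_t(\mathbf{y}^*_k(y_0),\F^{\eps_k}_{\theta^*_k}(\mathbf{y}^*_k(y_0)))-J_t(\bar y,\bar u)| \le c\left(\wnorm{\mathbf{y}^*_k(y_0)-\bar y}+\|\F^{\eps_k}_{\theta^*_k}(\mathbf{y}^*_k(y_0))-\bar u\|_{L^2(I;\R^m)}\right),
\end{align*}
valid uniformly in $t\in I$ and $y_0\in Y_0$ (the terminal contribution is absorbed via $W_T\hookrightarrow\mathcal{C}(\bar I;\R^n)$). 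Squaring, integrating over $I$ and over $Y_0$, and invoking the strong convergences $\mathbf{y}^*_k\to\bar{\mathbf{y}}$ in $L^2(Y_0;W_T)$ and $\F^{\eps_k}_{\theta^*_k}(\mathbf{y}^*_k)\to\bar{\mathbf{u}}$ in $L^2(Y_0;L^2(I;\R^m))$ from Theorem~\ref{thm:convoftraj} then yields $J_{\bullet}(\mathbf{y}^*_k,\F^{\eps_k}_{\theta^*_k}(\mathbf{y}^*_k))\to J_{\bullet}(\bar{\mathbf{y}},\bar{\mathbf{u}})$ in $L^2(Y_0;L^2(I))$.

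The one genuinely non-formal step is the identification $J_{\bullet}(\bar{\mathbf{y}},\bar{\mathbf{u}})=\mathcal{V}^*(\bar{\mathbf{y}})$. By Theorem~\ref{thm:convoftraj} the limit pair $(\bar y,\bar u)$ is a minimizer of~\eqref{def:openloopproblem} for $\mathcal{L}$-a.e.\ $y_0$, hence by the uniqueness asserted in $(\mathbf{A.3})$ it coincides with $(\mathbf{y}^*(y_0),\F^*(\mathbf{y}^*(y_0)))$; the dynamic programming identity~\eqref{eq:dynamicalprog} of Proposition~\ref{prop:structure} then gives $J_t(\bar y,\bar u)=V^*(t,\bar y(t))=\mathcal{V}^*(\bar{\mathbf{y}})(y_0)(t)$ for a.e.\ $t\in I$. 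Combining this identification with the two displays above and a final triangle inequality gives the first assertion. I do not anticipate any serious obstacle here: the only point requiring care is that the Lipschitz constant for $J_t$ be chosen uniformly in $t\in I$ and $y_0\in Y_0$, which is guaranteed by the uniform $L^\infty$-bounds on $\mathbf{y}^*_k$ and $\F^{\eps_k}_{\theta^*_k}(\mathbf{y}^*_k)$ built into the restricted admissible sets $\mathbf{\hat Y}_{ad}$ and $\mathbf{\hat U}_{ad}$.
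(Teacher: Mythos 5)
Your proposal is correct and takes essentially the same route as the paper's proof: both use the penalty bounds of Theorem~\ref{thm:convofobj} (which require $\gamma_1,\gamma_2>0$, and become unweighted bounds since $\omega\ge c>0$), the strong convergences $\mathbf{y}^*_k\to\bar{\mathbf{y}}$, $\F^{\eps_k}_{\theta^*_k}(\mathbf{y}^*_k)\to\bar{\mathbf{u}}$, $\mathbf{p}^*_k\to\bar{\mathbf{p}}$ from Theorem~\ref{thm:convoftraj}, the continuity of $J_\bullet$ in its arguments, and the identification $J_\bullet(\bar{\mathbf{y}},\bar{\mathbf{u}})=\mathcal{V}^*(\bar{\mathbf{y}})$, followed by triangle inequalities; your write-up merely fills in details the paper leaves implicit. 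One small imprecision: $(\mathbf{A.3})$ asserts uniqueness of the \emph{closed-loop} solution, not that $\argmin$ of \eqref{def:openloopproblem} is a singleton, so your appeal to uniqueness to identify $(\bar y,\bar u)$ with $(\mathbf{y}^*(y_0),\F^*(\mathbf{y}^*(y_0)))$ is not literally justified; the identity $J_t(\bar y,\bar u)=V^*(t,\bar y(t))$ should instead be obtained directly from the dynamic programming principle applied to the minimizing pair $(\bar y,\bar u)$, which is what the paper does implicitly.
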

\begin{proof}
Due to the convergence of~$\mathbf{y}^*_k \to \bar{\mathbf{y}}$ in $L^2(Y_0;L^2(I;\R^n))$ and~$\F^{\eps_k}_{\theta^*_k}(\mathbf{y}^*_k) \to \bar{\mathbf{u}}$ in $L^2(Y_0;L^2(I;\R^m))$, we conclude that
\begin{align*}
J_\bullet (\mathbf{y}^*_k,\F^{\eps_k}_{\theta^*_k}(\mathbf{y}^*_k)) \rightarrow J_\bullet (\bar{\mathbf{y}},\bar{\mathbf{u}})= \mathcal{V}^*(\bar{\mathbf{y}})~\text{in}~L^2(Y_0;L^2(I)).
\end{align*}
Together with
\begin{align*}
\lim_{k\rightarrow \infty} \int_{Y_0} \omega(y_0)  \| V^{\eps_k}_{\theta^*_k}(\mathbf{y}^*_k(y_0))-J_\bullet (\mathbf{y}^*_k(y_0), \F^{\eps_k}(\mathbf{y}^*_k(y_0)))\|^2_{L^2}~\mathrm{d}\mathcal{L}(y_0)=0,
\end{align*}
see Theorem~\ref{thm:convofobj}, we arrive at~$\mathcal{V}^{\eps_k}_{\theta^*_k}(\mathbf{y}^*_k) \rightarrow \mathcal{V}^*(\bar{\mathbf{y}})$ in~$L^2(Y_0;L^2(I))$. The statement on the convergence of~$\partial_y \mathcal{V}^{\eps_k}_{\theta^*_k}(\mathbf{y}^*_k)$ follows similarly from the strong convergence of~$\mathbf{p}_k$.
\end{proof}
\section{Learning from a finite training set}
We turn to analysing a discrete  version of \eqref{def:approxfeedprop}.
In this case we can proceed without the state-space constraint
$\mathbf{y} \in \textbf{Y}_{ad}$ provided certain growth bounds on
$\mathbf{f}$ and $\mathbf{g}$ are satisfied. The numerical realization of \eqref{def:approxfeedprop} will always rely on such a discrete
approximation. Henceforth we fix a finite ensemble of initial conditions
$\{y_0^i: i=1,\dots, N\} \subset Y_0$. For positive weights $\omega_i$, $i=1,\dots,N$, and $\eps>0$ we consider
\begin{align} \label{eq:learningprobfinite}
\inf_{y_i, p_i \in W_T, \theta \in \mathcal{R}_\eps} \left
\lbrack\sum^N_{i=1} \omega_i J_\eps(y_i,p_i, \theta)+
\frac{\gamma_\eps}{2} \|\theta\|^2_{\mathcal{R}_\eps} \right \rbrack
\tag{$\mathcal{P}^N_\eps$}
\end{align}
subject to
\begin{align*}
\dot{y}_i
&=\mathbf{f}(y_i)+\mathbf{g}(y_i)\F^\eps_\theta(y_i),~y_i(0)=y^i_0 \\
-\dot{p}_i&= D\mathbf{f}(y_i)^\top p_i+ \lbrack
D\mathbf{g}(y_i)^\top\F^\eps_\theta(y_i)\rbrack p_i+\mathbf{Q}_1^\top
\mathbf{Q}_1(y_i-y_d),~p_i(T)= Q^\top_2 Q_2(y_i(T)-y_d^T).
\end{align*}

Throughout this section, Assumptions~\ref{ass:feedbacklaw}
and~\ref{ass:approxsmoothness} are supposed to hold. Further $\eps$ is
supposed to be sufficiently small so that the set of admissible
solutions for \eqref{eq:learningprobfinite} is nonempty, compare
Theorem~\ref{thm:existenceneuralnetwork2}. It will be convenient to introduce $\mathbf{y}=
\text{col}(y_1,\dots,y_N)$, and  $\mathbf{p}=
\text{col}(p_1,\dots,p_N)$, which replace the ensemble states and
costates from the previous sections.
\begin{prop}\label{prop:existencefinite}
Let~$\eps>0$ be sufficiently small and let $(\mathbf{y}^k, \mathbf{p}^k,
\theta_k)\in W_T^{2N}\times \mathcal{R}_\eps$
denote an infimizing sequence for \eqref{eq:learningprobfinite}.
If ~$\max_{i} \|y^k_i\|_{L^\infty(I;\R^n)} \leq M_\infty$  for
some~${M_\infty}>0$  independent of~$k \in \N$, then
Problem~\eqref{eq:learningprobfinite}   admits at least one
minimizer~$(\mathbf{y}^*, \mathbf{p}^*, \theta^*)$.
\end{prop}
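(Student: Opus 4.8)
The plan is to apply the direct method of the calculus of variations, following the strategy of the proof of Theorem~\ref{thm:existence} but exploiting the finiteness of the ensemble and replacing the role of the a priori constraint $\mathbf{y}\in\mathbf{Y}_{ad}$ by the assumed uniform bound $\max_i \|y^k_i\|_{L^\infty(I;\R^n)} \leq M_\infty$. First I would derive uniform bounds on the infimizing sequence. Since $J_\eps \geq 0$ and the sequence is infimizing, the regularization term yields $\tfrac{\gamma_\eps}{2}\|\theta_k\|_{\mathcal{R}_\eps}^2 \leq C$, so $\{\theta_k\}$ is bounded in the finite-dimensional space $\mathcal{R}_\eps$, say $\|\theta_k\|_{\mathcal{R}_\eps} \leq R$. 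By Assumption~\ref{ass:approxsmoothness} the map $(\theta,t,y)\mapsto \partial_y V^\eps_\theta(t,y)$ lies in $\mathcal{C}^3$, hence is bounded and uniformly continuous on the compact set $\{\|\theta\|_{\mathcal{R}_\eps}\leq R\}\times \bar I \times \bar B_{M_\infty}(0)$. Combined with the $L^\infty$ bound on $y^k_i$ (which, by the embedding $W_T\hookrightarrow \mathcal{C}(\bar I;\R^n)$, keeps each trajectory inside the fixed compact ball $\bar B_{M_\infty}(0)$) and the local boundedness of $\mathbf{f},\mathbf{g}$ from Assumption~\ref{ass:feedbacklaw}~$\mathbf{A.1}$, this gives a uniform bound on $\F^\eps_{\theta_k}(y^k_i)$ in $L^\infty(I;\R^m)$ and therefore, via the state equation, on $\dot y^k_i$ in $L^2(I;\R^n)$. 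Thus $\{y^k_i\}$ is bounded in $W_T$ for each $i$. Arguing as in Lemma~\ref{lem:weakclosedadjoint}, the boundedness of the coefficients of the linear adjoint equation together with Gronwall's inequality then bounds $\{p^k_i\}$ in $W_T$.

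With these bounds I would extract subsequences (not relabelled) with $\theta_k \to \theta^*$ in $\mathcal{R}_\eps$ (strong, by finite-dimensionality), $y^k_i \rightharpoonup y^*_i$ and $p^k_i \rightharpoonup p^*_i$ weakly in $W_T$. Using the compact embedding $W_T\hookrightarrow_c \mathcal{C}(\bar I;\R^n)$ gives strong convergence $y^k_i \to y^*_i$ in $\mathcal{C}(\bar I;\R^n)$, whence $\mathbf{f}(y^k_i)\to\mathbf{f}(y^*_i)$ and $\mathbf{g}(y^k_i)\to\mathbf{g}(y^*_i)$, and, using $\theta_k\to\theta^*$ together with the uniform continuity of $\partial_y V^\eps_\cdot$, $\F^\eps_{\theta_k}(y^k_i)\to\F^\eps_{\theta^*}(y^*_i)$ in $L^\infty(I;\R^m)$, exactly as in Lemma~\ref{lem:weakclosedstate}. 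Hence the right-hand side of the state equation converges strongly in $L^2$, which together with $\dot y^k_i \rightharpoonup \dot y^*_i$ upgrades the convergence to $y^k_i \to y^*_i$ strongly in $W_T$ and shows that $y^*_i$ solves $\dot y^*_i=\mathbf{f}(y^*_i)+\mathbf{g}(y^*_i)\F^\eps_{\theta^*}(y^*_i)$, $y^*_i(0)=y^i_0$. The identical bootstrap applied to the time-reversed linear adjoint equation, as in Lemma~\ref{lem:weakclosedadjoint}, yields $p^k_i \to p^*_i$ strongly in $W_T$ and that $p^*_i$ satisfies the adjoint equation. Therefore $(\mathbf{y}^*,\mathbf{p}^*,\theta^*)$ is admissible for \eqref{eq:learningprobfinite}.

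Finally, since all convergences are strong (states and adjoints in $W_T$, feedback controls in $L^2$, parameters in $\mathcal{R}_\eps$), and since the finite-sum objective depends continuously on these quantities, where the terms involving $V^\eps_{\theta_k}$ and $\partial_y V^\eps_{\theta_k}$ evaluated along the trajectories converge by the uniform continuity used in the proof of Theorem~\ref{thm:existence}, I may pass to the limit term by term to obtain $\sum_i \omega_i J_\eps(y^*_i,p^*_i,\theta^*)+\tfrac{\gamma_\eps}{2}\|\theta^*\|^2_{\mathcal{R}_\eps} = \inf \eqref{eq:learningprobfinite}$, so $(\mathbf{y}^*,\mathbf{p}^*,\theta^*)$ is a minimizer. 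I expect the only genuinely delicate step to be the first one: deriving the uniform $W_T$ bounds on $\dot y^k_i$ and on the adjoints $p^k_i$ from the $L^\infty$ hypothesis alone. Without the built-in constraint $\mathbf{y}\in\mathbf{Y}_{ad}$ one must use that the hypothesis confines every state to the fixed compact ball $\bar B_{M_\infty}(0)$, on which $\mathbf{f},\mathbf{g}$ and, for $\|\theta_k\|_{\mathcal{R}_\eps}\leq R$, the gradient $\partial_y V^\eps_{\theta_k}$ are uniformly bounded; once these bounds are secured, the remainder is a finite-dimensional simplification of the closedness arguments of Section~\ref{sec:existence}, requiring no Bochner-integral machinery over $Y_0$.
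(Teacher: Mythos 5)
Your proof is correct, and its overall architecture coincides with the paper's: uniform bounds on the infimizing sequence, subsequence extraction, the closedness arguments of Section~\ref{sec:existence} transplanted to the finite ensemble, and term-by-term passage to the limit in the objective (the paper compresses all of this into ``the proof can now be completed by the same steps as Theorem~\ref{thm:existence}''). The one substantive difference is the mechanism that bounds the feedback controls. The paper exploits coercivity of $J$ in the control variable: since the objective values along the infimizing sequence are bounded and $\beta>0$, the term $\frac{\beta}{2}\|\F^\eps_{\theta_k}(y^k_i)\|_{L^2}^2$ contained in $J(y^k_i,\F^\eps_{\theta_k}(y^k_i))$ is bounded, which is exactly the estimate \eqref{eq:dimdepend}; together with the $L^\infty$ hypothesis and the local boundedness of $\mathbf{f},\mathbf{g}$ this bounds $\dot{y}^k_i$ in $L^2$. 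You instead exploit coercivity in the parameters: $\gamma_\eps>0$ bounds $\|\theta_k\|_{\mathcal{R}_\eps}\leq R$, and the joint continuity of $(\theta,t,y)\mapsto\partial_y V^\eps_\theta(t,y)$ from Assumption~\ref{ass:approxsmoothness} on the compact set $\bar{B}_R(0)\times\bar{I}\times\bar{B}_{M_\infty}(0)$ then gives a uniform $L^\infty$ bound on $\F^\eps_{\theta_k}(y^k_i)$. Both mechanisms are valid under the standing assumptions; yours even produces the stronger ($L^\infty$) control bound and recycles the parameter bound that is needed in any case to extract $\theta_k\to\theta^*$. What the paper's version buys is independence from the $L^\infty$ state hypothesis: the estimate \eqref{eq:dimdepend} holds for any infimizing sequence, without confining the trajectories to a compact set, and it is precisely this feature that lets Remark~\ref{rem:linftybound} reuse \eqref{eq:dimdepend} to \emph{derive} the $L^\infty$ bound from growth conditions on $f$ and $g$ rather than assume it; your compactness argument could not serve that purpose, since it presupposes the very bound that the remark seeks to establish.
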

\begin{proof}
Since by assumption $(\mathbf{y}^k, \mathbf{p}^k, \theta_k)$
is an infimizing sequence for \eqref{eq:learningprobfinite} and since ~$\beta >0$ we have
\begin{align} \label{eq:dimdepend}
\max_{i} \|{\bf{Q_1}} y^k_i\|^2_{L^2}+ \max_{i}
\|\F^\eps_{\theta_k}(y^k_i)\|^2_{L^2} \leq C_N
\end{align}
for some~$C_N >0$ depending on~$N$. Moreover there holds
\begin{align*}
\|\dot{y}^k_i\|_{L^2} \leq  \|\mathbf{f}(y^k_i)\|_{L^2}+
\|\mathbf{g}(y_i)\F^\eps_{\theta_k}(y^k_i)\|_{L^2} \leq
C(\mathbf{f},\mathbf{g}) M_\infty (1+C_N )
\end{align*}
using the uniform~$L^\infty$ and~$L^2$ boundedness of~$y^k_i$
and~$\F^\eps_\theta(y^k_i)$, respectively. Thus we also
have~$\wnorm{y^i_k}\leq \widehat{C}_N $ for all~$k \in \N$, for some
~$\widehat{C}_N>0$ which depends on~$N$ but not on~$k$ and~$i$. The
proof  can now be completed by the same steps as
Theorem~\ref{thm:existence}.
\end{proof}
\begin{remark}\label{rem:linftybound}
The~$L^\infty$-boundedness of the minimizing sequence~$y^k_i$ in
Proposition~\ref{prop:existencefinite} can be be ensured by additional
assumptions on the dynamics of the problem. These include:
\begin{itemize}
\item Add an additional state constraint~$\|y_i\|_{L^\infty} \leq
\widehat{M}$ to~\eqref{eq:learningprobfinite}.
\item Assume that there are~$a_1,a_2,a_3>0$ such that
\begin{align*}
|f(x)| \leq a_1+ a_2 |x|+ a_3 |x|^2,~\|g(x)\| \leq a_1+a_2|x| \quad
\forall x \in \R^n,
\end{align*}
and that $Q_1$ is positive definite. Then  by \eqref{eq:dimdepend} the family $\{y^k_i\}$ is
uniformly w.r.t. $i\in\{1,\dots,n\}$ and $k=1,\dots$ bounded in $L^2(I;\R^n)$ .
Further  we can readily verify that
\begin{align*}
\|\dot{y}^k_i\|_{L^1} &\leq \|\mathbf{f}(y^k_i)\|_{L^1}+
\|\mathbf{g}(y^k_i)\|_{L^1}
  \\ & \leq 2a_1 T+ a_2 \|y^k_i\|_{L^1}+ a_3 \|y^k_i\|^2_{L^2}+ a_2
\|y^k_i\|_{L^2} \|\F^\eps_{\theta_k}(y^k_i)\|^2_{L^2} \leq
  M_N
\end{align*}
for an~$N$-dependent bound~$M_N>0$. Here we made use of the
$L^2$-boundedness of~$y^k_i$ and~$\F^\eps_{\theta_k}(y^k_i)$  which follows from  \eqref{eq:dimdepend} in the
proof of Proposition ~\ref{prop:existencefinite}, and the assumption that $Q_1>0$. Consequently~$y^k_i$ is uniformly bounded
in~$W^{1,1}(I; \R^n)$ and thus also in~$L^\infty(I;\R^n)$.
\item Assume that~$f(x)=Ax-h(x)$ where~$A \in \R^{n \times n}$ and~$h$
is monotone i.e.~$(x,h(x))_{\R^n} \geq 0$ for all~$x\in\R^n$. Moreover
assume that $Q_1$ is positive definite and that
\begin{align*}
\|g(x)\| \leq a_1+a_2|x| \quad \forall x \in \R^n.
\end{align*}
In this case, testing the equation satisfied by $y_i$ with $y_i$, and  a
Gronwall argument yields
\begin{align*}
|y^k_i(t)|^2 \leq C_N \left( |y^i_0|^2+ \|y^k_i\|^2_{L^2} +
\|\F^\eps_{\theta_k}(y^k_i)\|^2_{L^2}  \right)
\end{align*}
for some~$N$-dependent~$C_N>0$ and all~$t\in I$. Thus, the uniform
boundedness of~$y^k_i$ in~$L^\infty(I;\R^n)$ follows again from
the~$L^2$-estimates on~$y^k_i$ and~$\F^\eps_{\theta_k}(y^k_i)$
in~\eqref{eq:dimdepend}.
\end{itemize}
\end{remark}
The convergence result as $\eps\to 0^+$   of
Theorem~\ref{thm:convoftraj} can be transferred to the finite training
set setting as well.

\begin{prop} \label{prop:convergencefinite}
Let  the regularisation parameters satisfy $\gamma_\eps
\|\theta_\eps\|^2_{\mathcal{R}_\eps} = O(\eps)$.
Further let ~$\eps_k>0$ be a positive null sequence such  that
for each~$k \in \N$  there exists a
solution~$(\mathbf{y}^k,\mathbf{p}^k,\theta_k) \in W_T^{2N} \times
\mathcal{R}_\eps$ to~$(\mathcal{P}^N_{\eps_k})$.
If there is~${M_\infty}>0$ with~$\max_{i} \|y^k_i\|_{L^\infty} \leq
M_\infty$ for all~$k \in \N$,
then~$(\mathbf{y}^k,\mathbf{p}^k,
\mathbf{\F}^{\eps_k}_{\theta_k}(\mathbf{y}^k))$ admits at least one
strong accumulation point~$(\mathbf{\bar{y}},  \mathbf{\bar{p}},
\mathbf{\bar{u}})$ in~$W^{2N}_T \times L^2(I;\R^m)^N$. Each such point
satisfies
\begin{align*}
(\bar{y}_i, \bar{u}_i)\in \argmin (P^{y^i_0}_\beta), \quad i=1, \dots,N,
\end{align*}
as well as
\begin{align*}
\dot{\bar{y}}_i
&=\mathbf{f}(\bar{y}_i)+\mathbf{g}(\bar{y}_i)\bar{u}_i,~\bar{y}_i(0)=y^i_0
\\
-\dot{\bar{p}}_i&= D\mathbf{f}(\bar{y}_i)^\top \bar{p}_i+ \lbrack
D\mathbf{g}(\bar{y}_i)^\top\bar{u}_i\rbrack \bar{p}_i+\mathbf{Q}_1^\top
\mathbf{Q}_1(\bar{y}_i-y_d),~\bar{p}_i(T)= Q^\top_2
Q_2(\bar{y}_i(T)-y_d^T).
\end{align*}
\end{prop}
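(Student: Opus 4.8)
The plan is to transfer the proof of Theorem~\ref{thm:convoftraj} to the present finitely–supported setting, exploiting that integration over $Y_0$ is now replaced by a finite weighted sum; consequently no compactness in the $y_0$-variable and no Bochner-space bookkeeping are needed, and the argument simplifies accordingly.

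First I would secure uniform a priori bounds. By the argument already used in the proof of Proposition~\ref{prop:existencefinite}, the hypothesis $\max_i\|y^k_i\|_{L^\infty}\le M_\infty$ together with $\beta>0$ gives $\max_i\|\F^{\eps_k}_{\theta_k}(y^k_i)\|_{L^2}\le C_N$ and hence $\max_i\wnorm{y^k_i}\le\widehat C_N$ uniformly in $k$. Since each $y^k_i$ remains in the compact ball $\bar B_{M_\infty}(0)$, the coefficient $D\mathbf{g}(y^k_i)^\top\F^{\eps_k}_{\theta_k}(y^k_i)$ is bounded in $L^2(I)$ and $D\mathbf{f}(y^k_i)^\top$ in $L^\infty(I)$; a Gronwall estimate on the time-reversed linear adjoint equation then yields $\max_i\wnorm{p^k_i}\le C_N$ uniformly in $k$. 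Reflexivity of $W_T$ and $L^2$ furnishes a (non-relabelled) subsequence with $y^k_i\rightharpoonup\bar y_i$, $p^k_i\rightharpoonup\bar p_i$ in $W_T$ and $\F^{\eps_k}_{\theta_k}(y^k_i)\rightharpoonup\bar u_i$ in $L^2(I;\R^m)$, and the compact embedding $W_T\hookrightarrow\mathcal{C}(\bar I;\R^n)$ upgrades the first two to uniform convergence. Passing to the limit in the state equation (strong convergence of $\mathbf{f}(y^k_i),\mathbf{g}(y^k_i)$ paired with the weakly convergent feedback, and $\dot y^k_i\rightharpoonup\dot{\bar y}_i$) shows that $(\bar y_i,\bar u_i)$ solves the closed-loop state equation, so it is admissible for $(P^{y^i_0}_\beta)$ and in particular $J(\bar y_i,\bar u_i)\ge V^*(0,y^i_0)$.

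For optimality I would transport the pointwise-in-$y_0$ estimates from the proof of Theorem~\ref{thm:convofobj}, applied to the comparison triple $(\mathbf{y}_{\eps_k}(y^i_0),\mathbf{p}_{\eps_k}(y^i_0),\theta_{\eps_k})$ of Theorem~\ref{thm:existenceneuralnetwork2}, and sum them against the weights $\omega_i$. Using $\gamma_{\eps_k}\|\theta_{\eps_k}\|^2_{\mathcal{R}_{\eps_k}}=O(\eps_k)$, the optimality of $(\mathbf{y}^k,\mathbf{p}^k,\theta_k)$, and $J_{\eps_k}\ge J(\cdot,\F^{\eps_k}_{\theta_k}(\cdot))$ (the penalty terms being nonnegative), this gives
\begin{align*}
\sum_{i=1}^N\omega_i J(y^k_i,\F^{\eps_k}_{\theta_k}(y^k_i))\le\sum_{i=1}^N\omega_i V^*(0,y^i_0)+c\,\eps_k.
\end{align*}
Weak lower semicontinuity of $J$ yields $J(\bar y_i,\bar u_i)\le\liminf_k J(y^k_i,\F^{\eps_k}_{\theta_k}(y^k_i))$ for each $i$; combined with the super-additivity of $\liminf$ over the finite sum and the per-index bound $J(\bar y_i,\bar u_i)\ge V^*(0,y^i_0)$, the strict positivity of the weights forces $J(\bar y_i,\bar u_i)=V^*(0,y^i_0)$, i.e. $(\bar y_i,\bar u_i)\in\argmin (P^{y^i_0}_\beta)$, for every $i$. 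As a by-product the individual values $J(y^k_i,\F^{\eps_k}_{\theta_k}(y^k_i))$ converge to $V^*(0,y^i_0)$.

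Strong convergence then follows by a norm argument: the convergence of $J(y^k_i,\F^{\eps_k}_{\theta_k}(y^k_i))$ together with $y^k_i\to\bar y_i$ in $\mathcal{C}(\bar I;\R^n)$ (which pins down the state-misfit and terminal terms) forces $\|\F^{\eps_k}_{\theta_k}(y^k_i)\|_{L^2}\to\|\bar u_i\|_{L^2}$, and weak convergence in the Hilbert space $L^2(I;\R^m)$ then upgrades this to strong convergence of the feedbacks. Inserting this into the state equation makes its right-hand side converge strongly in $L^2(I;\R^n)$, whence $y^k_i\to\bar y_i$ strongly in $W_T$. With strong feedback convergence in hand, the coefficients and data of the adjoint equation converge strongly, so $\bar p_i$ solves the claimed limiting adjoint equation; subtracting it from the equation for $p^k_i$ and applying Gronwall once more gives $p^k_i\to\bar p_i$ strongly in $W_T$. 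The main obstacle I anticipate is the optimality step, namely decoupling the single inequality for the weighted sum into the $N$ separate minimality statements; this relies crucially on pairing the transported upper bound from Theorem~\ref{thm:convofobj} with the per-index lower-semicontinuity bound and on the positivity of the $\omega_i$. By contrast, the absence of integration over $Y_0$ removes the delicate weak-convergence argument for the bilinear adjoint term $[D\mathbf{g}(y^k_i)^\top\F^{\eps_k}_{\theta_k}(y^k_i)]p^k_i$ required in Theorem~\ref{thm:convoftraj}, since here strong $L^2$-convergence of the feedback is already available before the adjoint passage to the limit.
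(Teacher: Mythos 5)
Your proposal is correct and follows essentially the same route as the paper's proof: comparison with the reference triple $(\mathbf{y}_{\eps_k},\mathbf{p}_{\eps_k},\theta_{\eps_k})$ from Theorem~\ref{thm:existenceneuralnetwork2} to bound the optimal values, uniform a priori bounds as in Proposition~\ref{prop:existencefinite}, weak subsequential limits satisfying the state equation, decoupling of the summed inequality into per-index optimality via lower semicontinuity and positivity of the $\omega_i$, and finally norm-convergence arguments adapted from Theorem~\ref{thm:convoftraj} (where, as you rightly note, the bilinear adjoint term is easier here since strong control convergence is available first). One small ordering caveat: the $k$-uniform bound $\max_i\|\F^{\eps_k}_{\theta_k}(y^k_i)\|_{L^2}\le C_N$ in your first paragraph already presupposes the uniform bound on the optimal objective values, which you only establish in the second paragraph via the comparison triple; the paper performs the comparison step first for exactly this reason, so your two steps should simply be swapped.
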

\begin{proof}
For every~$\eps_k$, with $k$ sufficiently large,   denote
by~$\theta_{\eps_k}\in \mathcal{R}_{\eps_k}$ the corresponding
parameters from Assumption~\ref{ass:approxsmoothness},
by~$\mathbf{y}_{\eps_{k}}$ the associated ensemble solution, see
Theorem~\ref{thm:existenceneuralnetwork2}, and
by~$\mathbf{p}_{\eps_{k}}$ the adjoint states. For abbreviation we
set~$y^{\eps_k}_i \coloneqq \mathbf{y}_{\eps_k}(y^i_0)$ and
$p^{\eps_k}_i \coloneqq \mathbf{p}_{\eps_k}(y^i_0)$. Then, by
optimality, we have
\begin{align}\label{eq:aux11}
\sum^N_{i=1} \omega_i J(y^k_i, \F^{\eps_k}_{\theta_k}(y^k_i)) \leq
\sum^N_{i=1} \omega_i J_\eps(y^{\eps_k}_i, p^{\eps_k}_i,
\theta_{\eps_k}) + \frac{\gamma_{\eps_k}}{2} \|\theta_{\eps_k}\|^2_{\mathcal{R}_\eps}.
\end{align}
As in the proof of Theorem~\ref{thm:convofobj} we see that the
righthandside of this inequality converges to~$\sum^N_{i=1} \omega_i
V^*(0,y^i_0)$ as~$k \rightarrow +\infty$. Thus it is bounded
independently of~$k \in \N$. Similarly to
Proposition~\ref{prop:existencefinite} we then conclude the existence
of~$C_N >0$ depending on~$N$, but not on~$k$, such that
\begin{align*}
\max_{i} \|{\mathbf Q_1}y^k_i\|^2_{L^2}+ \max_{i}
\|\F^\eps_{\theta_k}(y^k_i)\|^2_{L^2} \leq C_N.
\end{align*}
Utilizing the state equation this can be improved to a~$k$-independent
bound on the~$W_T$-norm of~$y^k_i$. By a Gronwall-type argument the same
can be shown for the adjoint states~$p^k_i$. Now fix an arbitrary
index~$i \in \{1,\dots,N\}$. Summarizing the previous observations we
get the uniform  boundedness of~$(y^k_i, p^k_i, \F^\eps_{\theta_k}(y^k_i)
)$ in~$W_T^2 \times L^2(I; \R^m) $  w.r.t. $k$, for each $i=1,\dots,N$. Each of its
weak accumulation points~$(\bar{y}_i, \bar{p}_i, \bar{u}_i) \in W_T^2
\times L^2(I; \R^m)$ satisfies
\begin{align*}
\dot{\bar{y}}&=\mathbf{f}(\bar{y})+\mathbf{g}(\bar{y})\bar{u},~
\bar{y}(0)=y_0.
\end{align*}
 From this we conclude that
\begin{align*}
0\le \sum^N_{i=1} \omega_i V^*(0,y^i_0) \leq \sum^N_{i=1} \omega_i
J(\bar y_i,\bar u_i) \le   \lim_{k\to \infty} \sum^N_{i=1} \omega_i
J(y^k_i, \F^{\eps_k}_{\theta_k}(y^k_i))\le \sum^N_{i=1} \omega_i
V^*(0,y^i_0),
\end{align*}
Since the second and third of the above inequalities also hold for each
summand we conclude that~$\lim_{k\to \infty}J(y^k_i,
\F^{\eps_k}_{\theta_k}(y^k_i))\rightarrow J(\bar{y}_i, \bar{u}_i)$ as
well as~$J(\bar{y}_i, \bar{u}_i)=V^*(0,y^i_0)$. Hence
\begin{align*}
(\bar{y}_i, \bar{u}_i)\in \argmin (P^{y^i_0}_\beta).
\end{align*}
The proof can now be concluded with minor adaptations to the proof of
Theorem~\ref{thm:convoftraj}.
\end{proof}

A result analogous to that of Proposition \ref{prop18} can also be
obtained for Problem \eqref{eq:learningprobfinite}. For the sake of
brevity we do not present the details.
\subsection{The reduced objective functional}
In order to compute a solution to~\eqref{eq:learningprobfinite} we will rely on gradient-based optimization methods. For this purpose we introduce a~\textit{reduced objective functional} by  eliminating the state and adjoint equations in~\eqref{eq:learningprobfinite}. Subsequently, we characterize the derivative of the reduced functional by means of adjoint techniques. To simplify the presentation we fix an arbitrary index~$i\in\{1,\dots,N\}$ in the following. Moreover, for abbreviation, we define the mapping
\begin{align*}
\mathbf{A} \colon W_T \times \mathcal{R}_\eps \to \mathcal{B}(W_T;L^2(I;\R^n)),~A(y,\theta)=D\mathbf{f}(y)^\top + \lbrack  D\mathbf{g}(y)^\top\F^\eps_\theta(y)\rbrack.
\end{align*}
Using this notation, the adjoint equation in~\eqref{eq:learningprobfinite} can be expressed compactly as
\begin{align*}
-\dot{p}_i= \mathbf{A}(y_i,\theta)p_i+\mathbf{Q}_1^\top \mathbf{Q}_1(y_i-y_d),~p_i(T)=Q^\top_2 Q_2(y_i(T)-y_d^T).
\end{align*}
First, we argue the existence of~\textit{parameter-to-state operators} for the adjoint and the state equation.
\begin{lemma} \label{lem:existenceparametertostate}
Define~$G_i \colon W_T \times W_T \times \mathcal{R}_\eps \to L^2(I; \R^n) \times L^2(I; \R^n) \times \R^n \times \R^n$ by
\begin{align*}
G_i(y,p, \theta)= \left(
\begin{array}{c}
\dot{y} -\mathbf{f}(y)-\mathbf{g}(y)\F^\eps_\theta(y)  \\
-\dot{p}- A(y,\theta)p-\mathbf{Q}_1^\top \mathbf{Q}_1(y-y_d)\\
y(0)-y^i_0 \\
p(T)- Q^\top_2 Q_2(y(T)-y_d^T)
\end{array}
\right).
\end{align*}
Let~$(\tilde{y},\tilde{p},\tilde{\theta})\in W_T \times W_T \times \mathcal{R}_\eps$ satisfy~$G(\tilde{y},\tilde{p},\tilde{\theta})=0$. Then there exists a neighbourhood~$\mathcal{N}_i(\tilde{y})\times \mathcal{N}_i(\tilde{p}) \times \mathcal{N}_i(\tilde{\theta})$ as well as~$\mathcal{C}^1$-mappings~$Y_i \colon \mathcal{N}_i(\tilde{\theta})\to \mathcal{N}_i(\tilde{y})\subset W_T$,~$P_i \colon \mathcal{N}_i(\tilde{\theta})\to \mathcal{N}_i(\tilde{p}) \subset W_T$ such that
\begin{align*}
G_i(Y_i(\theta),P_i(\theta),\theta)=0 \quad \forall \theta \in \mathcal{N}(\tilde{\theta}).
\end{align*}
Given $y_i:=Y_i(\theta)$ and $~p_i:=P_i(\theta)$, the Fr\'{e}chet derivatives of~$Y_i$ and~$P_i$ at~$\theta \in \mathcal{N}_i(\tilde{\theta})$, in  direction ~$\delta \theta \in \mathcal{R}_\eps$,  denoted  by ~$\delta Y_i \coloneqq Y'_i(\theta)(\delta \theta) $,~$\delta P_i \coloneqq P'_i(\theta)(\delta \theta)$ satisfy
\begin{align*}
&\dot{\delta Y_i}- \mathbf{A}(y_i,\theta)^\top \delta Y_i- \mathbf{g}(y_i) D_y \F^\eps_\theta(y_i)\delta Y= \mathbf{g}(y_i) D_\theta \F^\eps_\theta(y_i)\delta \theta, \\
&-\dot{\delta P_i}-\mathbf{A}(y_i,\theta)\delta P_i =\lbrack D_y\mathbf{A}(y_i,\theta)\delta Y_i \rbrack p_i+ \mathbf{Q}_1 \mathbf{Q}_1 \delta Y_i+\lbrack \partial_\theta \mathbf{A}(y_i,\theta) \delta \theta \rbrack p_i,\\
&~\delta Y_i(0)=0, ~\delta P_i(T)=  Q_2^\top Q_2 \delta Y_i(T).
\end{align*}

\end{lemma}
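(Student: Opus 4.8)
The plan is to derive both assertions from the implicit function theorem applied to $G_i$ at the prescribed zero $(\tilde y,\tilde p,\tilde\theta)$, treating $(y,p)$ as the dependent variable and $\theta$ as the parameter. First I would verify that $G_i$ is continuously Fréchet differentiable on $W_T\times W_T\times\mathcal{R}_\eps$. This follows from Assumption~\ref{ass:feedbacklaw} and Assumption~\ref{ass:approxsmoothness}: the Nemytskii operators $\mathbf{f},\mathbf{g}$ and the feedback $\F^\eps_\theta$ are $\mathcal{C}^1$ with the stated derivatives $D\mathbf{f},D\mathbf{g}$, $D_y\F^\eps_\theta$ and $D_\theta\F^\eps_\theta$, so that $(y,\theta)\mapsto\mathbf{A}(y,\theta)$ is $\mathcal{C}^1$; the temporal derivative and the affine boundary evaluations $y\mapsto y(0)$ and $(y,p)\mapsto p(T)-Q_2^\top Q_2 y(T)$ are bounded linear, hence smooth, maps into $\R^n$ by virtue of the embedding $W_T\hookrightarrow\mathcal{C}(\bar I;\R^n)$.

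The main step is to show that $\partial_{(y,p)}G_i(\tilde y,\tilde p,\tilde\theta)\in\mathcal{B}(W_T\times W_T;\,L^2(I;\R^n)^2\times\R^n\times\R^n)$ is a topological isomorphism. Here I would exploit its block-triangular structure: the first and third components depend only on the state perturbation $\delta y$, while the adjoint components depend on $(\delta y,\delta p)$ but are \emph{affine} in $\delta p$. Concretely, given any right-hand side $(r_1,r_2,a,b)$ in the codomain, one first solves the forward linearised state equation obtained by differentiating the first component of $G_i$ with respect to $y$, subject to $\delta y(0)=a$ and source $r_1$. Since $\tilde y\in\mathcal{Y}_{ad}$ and since $D\mathbf{f},D\mathbf{g},\mathbf{g}$ together with $D_y\F^\eps_{\tilde\theta}$ are bounded on $\mathcal{Y}_{ad}$, this is a linear ODE with $L^\infty$-coefficients, which admits a unique solution $\delta y\in W_T$ by Gronwall's inequality, with the a priori bound $\wnorm{\delta y}\le c(\|r_1\|_{L^2(I;\R^n)}+|a|)$. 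Substituting this $\delta y$ into the adjoint components and solving the resulting backward linear equation $-\dot{\delta p}-\mathbf{A}(\tilde y,\tilde\theta)\delta p=r_2+[D_y\mathbf{A}(\tilde y,\tilde\theta)\delta y]\tilde p+\mathbf{Q}_1^\top\mathbf{Q}_1\delta y$ with terminal datum $\delta p(T)=b+Q_2^\top Q_2\delta y(T)$ again yields a unique $\delta p\in W_T$ obeying an analogous Gronwall bound. This exhibits a bounded two-sided inverse and establishes the isomorphism property.

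Granting this, the implicit function theorem provides neighbourhoods $\mathcal{N}_i(\tilde y)\times\mathcal{N}_i(\tilde p)\times\mathcal{N}_i(\tilde\theta)$ and $\mathcal{C}^1$-maps $Y_i,P_i$ on $\mathcal{N}_i(\tilde\theta)$ with $G_i(Y_i(\theta),P_i(\theta),\theta)=0$. To obtain the characterisation of $\delta Y_i=Y_i'(\theta)\delta\theta$ and $\delta P_i=P_i'(\theta)\delta\theta$, I would differentiate this identity in direction $\delta\theta$ via the chain rule, namely $\partial_y G_i\,\delta Y_i+\partial_p G_i\,\delta P_i+\partial_\theta G_i\,\delta\theta=0$, and read off the four components. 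The $\theta$-derivative of the first component contributes the source $\mathbf{g}(y_i)D_\theta\F^\eps_\theta(y_i)\delta\theta$, reproducing the stated forward equation for $\delta Y_i$ with $\delta Y_i(0)=0$; the $\theta$-derivative of the second component contributes $[\partial_\theta\mathbf{A}(y_i,\theta)\delta\theta]p_i$, giving the backward equation for $\delta P_i$ together with the coupled terminal condition $\delta P_i(T)=Q_2^\top Q_2\delta Y_i(T)$.

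I expect the only genuine obstacle to be the invertibility step, where care is required to confirm that all coefficient operators are uniformly bounded on $\mathcal{Y}_{ad}$ so that the Gronwall argument applies and produces a \emph{bounded} inverse rather than a merely algebraic one; once the triangular decoupling is used, the remainder is routine linear ODE theory and careful bookkeeping of the chain rule.
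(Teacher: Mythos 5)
Your proposal is correct and follows essentially the same route as the paper: the paper's proof simply invokes the implicit function theorem for $G_i$ and records the chain-rule identity $\bigl(\partial_y G_i \;\; \partial_p G_i\bigr)\bigl(\delta Y,\delta P\bigr)^\top = -\partial_\theta G_i\,\delta\theta$ from which the stated linearized equations are read off. The only difference is that you explicitly verify the hypotheses the paper leaves implicit, namely the $\mathcal{C}^1$-regularity of $G_i$ and the bounded invertibility of $\partial_{(y,p)}G_i$ via the triangular forward--backward solve with Gronwall estimates, which is exactly the right way to fill that gap.
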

\begin{proof}
This is a direct consequence of the implicit function theorem applied to~$G$ noting that the directional derivatives satisfy
\begin{align*}
\begin{pmatrix}
\partial_y G_i(y,p,\theta) & \partial_p G_i(y,p,\theta)
\end{pmatrix}
\begin{pmatrix}
\delta Y \\ \delta P
\end{pmatrix}
=- \partial_\theta G_i(y,p,\theta) \delta \theta.
\end{align*}
\end{proof}
Now consider an admissible point~$(\tilde{\mathbf{y}}, \tilde{\mathbf{p}}, \theta) \in W^{2N}_T \times \mathcal{R}_\eps$ for~\eqref{eq:learningprobfinite}. For every~$i=1,\dots,N$, let~$\mathcal{N}_i(\tilde{\theta})$ and~$Y_i,P_i$ denote the corresponding neighbourhoods and operators from Lemma~\ref{lem:existenceparametertostate}. Setting~$\mathcal{N}(\tilde{\theta})= \bigcap^N_{i=1} \mathcal{N}_i(\tilde{\theta})$ define the reduced objective functional
\begin{align} \label{eq:reducedobj}
\mathcal{J}_N \colon \mathcal{N}(\tilde{\theta}) \to [0,+\infty),~\mathcal{J}_N(\theta)= \sum^N_{i=i} \omega_i J_\eps(Y_i(\theta),P_i(\theta), \theta)+ \frac{\gamma_\eps}{2}\|\theta\|^2_{\mathcal{R}_\eps},
\end{align}
and set
\begin{align*}
\Phi_i(t)= \int^t_0 (V^\eps_\theta(s,y_i(s))-J_s(y_i,u))~\mathrm{d}s.
\end{align*}.
\begin{prop} \label{prop:direcderiv}
The functional~$\mathcal{J}_N$ from~\eqref{eq:reducedobj} is at least of class~$\mathcal{C}^1$ on~$\mathcal{N}(\tilde{\theta})$. Given~$ \theta \in \mathcal{N}(\tilde{\theta})$, set~$y_i \coloneqq Y_i(\theta)$,~$p_i \coloneqq P_i(\theta)$ as well as~$\delta Y_i \coloneqq Y'_i(\theta)(\delta \theta)$,~$ \delta P_i \coloneqq P'_i(\theta)(\delta \theta)$. The directional derivative of~$\mathcal{J}_N$ at~$\theta$ in the direction of~$\delta \theta \in \mathcal{R}_\eps$ is given by
\begin{align*}
\mathcal{J}'_N(\theta)( \delta \theta)= \sum^N_{i=1} \omega_i \left( (\widehat{y}_i, \delta Y_i)_{L^2}+(\widehat{y}^T_i, \delta Y_i(T))_{\R^n}+(\widehat{p}_i, \delta P_i)_{L^2}+(\widehat{\theta}_i,\delta \theta)_{\mathcal{R}_\eps}\right)+\gamma_\eps (\theta,\delta \theta)_{\mathcal{R}_\eps}
\end{align*}
with
\begin{align*}
\widehat{y}_i=(1-\gamma_1\Phi_i)& \mathbf{Q}_1 \mathbf{Q}_1 (y_i-y_d)+ \beta(1-\gamma_1\Phi_i) D_y \F^\eps_\theta(y_i)^\top\F^\eps_\theta(y_i)\\&+\gamma_1 (\mathcal{V}^\eps_\theta(t,y_i)-J_\bullet(y_i,\F^\eps_\theta(y_i)))\partial_y \mathcal{V}^\eps_\theta(y_i)+\gamma_2 D_{yy}\mathcal{V}^\eps_\theta(y_i)(\partial_{y}\mathcal{V}^\eps_\theta(y_i)-p_i),
\end{align*}
and
\begin{align*}
\widehat{y}^T_i=\alpha(1-\gamma_1\Phi_i(0)) Q_2 Q_2 (y_i(T)-y^T_d),
\end{align*}
as well as
\begin{align*}
\widehat{p}_i=\gamma_2(p_i-\partial_y \mathcal{V}^\eps_\theta(y_i)),
\end{align*}
and
\begin{align*}
\widehat{\theta}_i&= \gamma_1 \int^T_0 D_\theta V^\eps_\theta(t,y_i(t))^\top(V^\eps_\theta(t,y_i(t))-J_t(y_i,\F^\eps_\theta(y_i)))~\mathrm{d} t \\
&+\int^T_0 \!\lbrack \beta (1\!-\!\gamma_1\Phi_i(t)) D_\theta F^\eps_\theta(t,y_i(t))^\top F^\eps_\theta(t,y_i(t))\!+\!\gamma_2 D_{y \theta} V^\eps_\theta(t,y_i(t))^\top(\partial_{y}{V}^\eps_\theta(t,y_i(t))\!-\!p_i(t)) \rbrack~\mathrm{d} t.
\end{align*}
\end{prop}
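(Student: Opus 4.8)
The plan is to obtain the formula by the chain rule, reducing everything to the partial derivatives of the integrand $J_\eps$ in its three slots $(y,p,\theta)$, composed with the parameter-to-state maps $Y_i,P_i$ of Lemma~\ref{lem:existenceparametertostate}. Those maps are $\mathcal{C}^1$, and since $V^\eps_{\cdot}(\cdot)\in\mathcal{C}^4(\mathcal{R}_\eps\times\R\times\R^n)$ by Assumption~\ref{ass:approxsmoothness} while $f,g$ are $\mathcal{C}^2$ by Assumption~\ref{ass:feedbacklaw}, the superposition maps $y\mapsto J(y,\F^\eps_\theta(y))$, $y\mapsto\mathcal{V}^\eps_\theta(y)$, $y\mapsto\partial_y\mathcal{V}^\eps_\theta(y)$ and the pointwise map $t\mapsto J_t(y,\F^\eps_\theta(y))$ are continuously Fr\'echet differentiable on $W_T$. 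Hence $J_\eps$, and therefore $\mathcal{J}_N$, is $\mathcal{C}^1$, and the chain rule gives
\[
\mathcal{J}_N'(\theta)(\delta\theta)=\sum_{i=1}^N\omega_i\big[\partial_y J_\eps(y_i,p_i,\theta)\,\delta Y_i+\partial_p J_\eps(y_i,p_i,\theta)\,\delta P_i+\partial_\theta J_\eps(y_i,p_i,\theta)\,\delta\theta\big]+\gamma_\eps(\theta,\delta\theta)_{\mathcal{R}_\eps},
\]
with $\delta Y_i,\delta P_i$ as in the statement. It then remains only to identify the three partials with $\widehat y_i,\widehat y_i^T,\widehat p_i,\widehat\theta_i$, where the $y$-derivative naturally splits into an interior $L^2(I;\R^n)$ pairing (yielding $\widehat y_i$) and a terminal pairing at $t=T$ (yielding $\widehat y_i^T$).

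The $p$-derivative is immediate: only the last summand of $J_\eps$ depends on $p$, and differentiating $\tfrac{\gamma_2}{2}\int_0^T|\partial_y V^\eps_\theta(t,y(t))-p(t)|^2\,\de t$ produces $\widehat p_i=\gamma_2(p_i-\partial_y\mathcal{V}^\eps_\theta(y_i))$. For the $y$- and $\theta$-derivatives I would differentiate each of the three summands of $J_\eps$ separately. The \emph{local} contributions are routine: differentiating $J(y,\F^\eps_\theta(y))$ yields the tracking term $Q_1^\top Q_1(y_i-y_d)$, the terminal term $Q_2^\top Q_2(y_i(T)-y_d^T)$, and the feedback contributions $\beta\,D_y\F^\eps_\theta(y_i)^\top\F^\eps_\theta(y_i)$ (respectively $\beta\,D_\theta F^\eps_\theta(\cdot,y_i)^\top F^\eps_\theta(\cdot,y_i)$ in the $\theta$-slot) that arise because $\F^\eps_\theta(y)=-\tfrac1\beta\mathbf g(y)^\top\partial_y\mathcal V^\eps_\theta(y)$ depends on both $y$ and $\theta$; differentiating the $\gamma_1$- and $\gamma_2$-summands through the explicit occurrences of $V^\eps_\theta$ and $\partial_y V^\eps_\theta$ gives the terms carrying $\partial_y\mathcal{V}^\eps_\theta(y_i)$, $D_{yy}\mathcal{V}^\eps_\theta(y_i)(\partial_y\mathcal{V}^\eps_\theta(y_i)-p_i)$ (using symmetry of the Hessian), $D_\theta V^\eps_\theta$, and $D_{y\theta}V^\eps_\theta^\top(\partial_y V^\eps_\theta-p_i)$.

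The genuinely delicate point — and the step I expect to be the main obstacle — is the dependence of $J_t(y,\F^\eps_\theta(y))$ on the \emph{entire future} of the trajectory and feedback over $[t,T]$. Differentiating $\tfrac{\gamma_1}{2}\int_0^T|V^\eps_\theta(t,y(t))-J_t(y,\F^\eps_\theta(y))|^2\,\de t$ in direction $\delta Y$ (or $\delta\theta$) therefore produces a double integral $\gamma_1\int_0^T r(t)\int_t^T(\cdots)\,\de s\,\de t$ with $r(t)=V^\eps_\theta(t,y(t))-J_t(y,\F^\eps_\theta(y))$. The key manoeuvre is to apply Fubini's theorem to swap the order of integration: after the swap the inner integration in $t$ runs over $[0,s]$, collapsing to $\int_0^s r(t)\,\de t=\Phi_i(s)$, which is precisely why the weight $\Phi_i$ enters the statement. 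This converts the nonlocal terms into single integrals and produces the prefactor $(1-\gamma_1\Phi_i)$ multiplying $Q_1^\top Q_1(y_i-y_d)$ and $\beta\,D_y\F^\eps_\theta(y_i)^\top\F^\eps_\theta(y_i)$ in $\widehat y_i$, the analogous factor in $\widehat\theta_i$, and, since the $t$-independent terminal contribution of $J_t$ is integrated against $r$ over all of $[0,T]$, the terminal factor $(1-\gamma_1\Phi_i(T))$ in $\widehat y_i^T$. Collecting the local and Fubini-transformed contributions summand by summand matches the stated expressions for $\widehat y_i,\widehat y_i^T,\widehat p_i,\widehat\theta_i$ and completes the proof.
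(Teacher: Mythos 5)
Your proposal is correct and follows essentially the same route as the paper's proof: differentiate the three summands of $J_\eps$ composed with the $\mathcal{C}^1$ parameter-to-state maps $Y_i,P_i$ from Lemma~\ref{lem:existenceparametertostate}, collect the local contributions, and resolve the nonlocal dependence of $J_t(y,\F^\eps_\theta(y))$ on the future of the trajectory by exchanging the order of integration; your Fubini step is exactly the paper's ``partial integration'' (write the inner factor as $\Phi_i'$, integrate by parts, and note both boundary terms vanish since $\Phi_i(0)=0$ and $\int_t^T(\cdot)\,\mathrm{d}s$ vanishes at $t=T$), so the two manipulations are identical.

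One remark on the terminal term: your factor $(1-\gamma_1\Phi_i(T))$ is the arithmetically correct one. With the definition $\Phi_i(t)=\int_0^t\bigl(V^\eps_\theta(s,y_i(s))-J_s(y_i,u)\bigr)\,\mathrm{d}s$ one has $\Phi_i(0)=0$, so the factor $(1-\gamma_1\Phi_i(0))$ in the stated $\widehat{y}^T_i$ (and in the paper's intermediate term $E_2$) is identically $1$, whereas the weight that genuinely multiplies the terminal pairing is $\int_0^T\bigl(V^\eps_\theta-J_t\bigr)\,\mathrm{d}t=\Phi_i(T)$; the statement's $\Phi_i(0)$ is a typo which your computation silently corrects.
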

\begin{proof}
The regularity of~$\mathcal{J}_N$ follows immediately from Lemma~\ref{lem:existenceparametertostate} and the chain rule. In order to compute the directional derivative we abbreviate
\begin{equation*}
\begin{array}l
F_1(y,u,\theta)=\frac{\gamma_1}{2}\int^T_0 |V^\eps_\theta(t,y(t))-J_t(y,u)|^2~\mathrm{d} t,\\[1.4ex]
F_2(y,p,\theta)=\frac{\gamma_2}{2}\int^T_0 |\partial_y V^\eps_\theta(t,y(t))-p(t)|^2~\mathrm{d} t
\end{array}
\end{equation*}
in the following.
Thus we have
\begin{align*}
J_\eps(Y_i(\theta),P_i(\theta),\theta)&= J(Y_i(\theta),\F^\eps_\theta(Y_i(\theta)))+F_1(Y_i(\theta),\F^\eps_\theta(Y_i(\theta)),\theta)+F_2(Y_i(\theta),P_i(\theta),\theta)\\&=G_1(\theta)+G_2(\theta)+G_3(\theta).
\end{align*}
We readily verify
\begin{align*}
G'_1(\theta)(\delta \theta)=&(\mathbf{Q}_1 \mathbf{Q}_1 (y_i-y_d),\delta Y_i )_{L^2}+ \beta (D_y \F^\eps_\theta(y_i)^\top\F^\eps_\theta(y_i),\delta Y_i)_{L^2}\\&+ \beta (D_\theta \F^\eps_\theta(y_i)^\top\F^\eps_\theta(y_i),\delta \theta)_{\mathcal{R}_\eps}+ (Q_2 Q_2 (y_i(T)-y^T_d),\delta Y_i(T))_{\R^n}.
\end{align*}
Recalling the definition of $\Phi_i$ we get
\begin{align*}
G'_2(\theta)& (\delta \theta)=\gamma_1 (E_1+E_2+E_3+E_4),
\end{align*}
where
\begin{align*}
E_1 &=  ((\mathcal{V}^\eps_\theta(y_i)\!-\!J_\bullet(y_i,\F^\eps_\theta(y_i))) \partial_y \mathcal{V}^\eps_\theta(y_i), \delta Y_i)_{L^2} +(D_\theta \mathcal{V}^\eps_\theta(y_i)^\top(\mathcal{V}^\eps_\theta(y_i)\!-\!J_\bullet(y_i,\F^\eps_\theta(y_i))), \delta \theta)_{\mathcal{R}_\eps} \\
&= ((\mathcal{V}^\eps_\theta(y_i)\!-\!J_\bullet(y_i,\F^\eps_\theta(y_i))) \partial_y \mathcal{V}^\eps_\theta(t,y_i), \delta Y_i)_{L^2} \\
&\qquad +\left( \int^T_0 D_\theta V^\eps_\theta(t,y_i(t))^\top(V^\eps_\theta(t,y_i(t))-J_t(y_i,\F^\eps_\theta(y_i)))~\mathrm{d} t, \delta \theta \right)_{\mathcal{R}_\eps},
\end{align*}
\begin{align*}
E_2 =- \int^T_0 &(V^\eps_\theta(t,y(t))-J_t(y,u))\\
 &\left(\int^T_t (Q_1 Q_1(y(s)-y_d(s)),\delta y(s))~\mathrm{d} s+(Q_2 Q_2(y(T)-y^T_d),\delta y(T))_{\R^n} \right) ~\mathrm{d} t \\
&=-(\Phi_i Q_1 Q_1(y-y_d),\delta y)_{L^2}- \Phi_i(0) (Q_2 Q_2(y(T)-y^T_d),\delta y(T))_{\R^n},
\end{align*}
as well as
\begin{align*}
E_3&=- \int^T_0 (V^\eps_\theta(t,y(t))-J_t(y,u)) \left( \beta \int^T_t (D_y F^\eps_\theta(s,y_i(s))^\top F^\eps_\theta(s,y_i(s)),\delta Y_i(s))_{\R^n}~\mathrm{d} s\right) ~\mathrm{d} t \\ &=- \beta(\Phi_i D_y \F^\eps_\theta(y_i)^\top \F^\eps_\theta(y_i), \delta Y_i )_{L^2},
\end{align*}
and
\begin{align*}
E_4&=- \int^T_0 (V^\eps_\theta(t,y(t))-J_t(y,u)) \left( \beta \int^T_t (D_\theta F^\eps_\theta(s,y_i(s))^\top F^\eps_\theta(s,y_i(s)),\delta \theta)_{\mathcal{R}_\eps}~\mathrm{d} s\right) ~\mathrm{d} t \\ &=- \beta \left(\int^T_0 \Phi_i(t) D_\theta F^\eps_\theta(t,y_i(t))^\top F^\eps_\theta(t,y_i(t))~\mathrm{d} t, \delta \theta \right)_{\mathcal{R}_\eps},
\end{align*}
by means of partial integration. Finally we calculate
\begin{align*}
G'_3(\theta)(\delta \theta)= \gamma_2 (D_{yy}&\mathcal{V}^\eps_\theta(y_i)(\partial_{y}\mathcal{V}^\eps_\theta(y_i)-p_i), \delta Y_i)_{L^2}- \gamma_2(\partial_{y}\mathcal{V}^\eps_\theta(y_i)-p_i, \delta P_i)_{L^2} \\&
+\gamma_2\left(\int^T_0 D_{y \theta} V^\eps_\theta(t,y_i(t))^\top(\partial_{y}{V}^\eps_\theta(t,y_i(t))-p_i(t))~\mathrm{d} t, \delta \theta \right)_{\mathcal{R}_\eps}.
\end{align*}
Summarizing the previous observations, we arrive at the claimed characterization.
\end{proof}
Applying a gradient method to~\eqref{eq:learningprobfinite} requires the computation of the gradient~$\nabla \mathcal{J}_{N}(\theta)\in \mathcal{R}_\eps$ which satisfies
\begin{align*}
\mathcal{J}'_N(\theta)( \delta \theta)=(\nabla \mathcal{J}_N(\theta), \delta \theta)_{\mathcal{R}_\eps} \quad \forall \delta \theta \in \mathcal{R}_\eps.
\end{align*}
This can be done by computing~$\mathcal{J}'_N(\theta)( e_j)$ for the canonical basis~$\{e_j\}^{N_\eps}_{j=1}\subset \mathcal{R}_\eps$. However, such reasoning leads to the necessity to solve ~$2\operatorname{dim}(\mathcal{R}_\eps) N$ additional ODEs in order to compute the sensitivities~$Y'_i(\theta)(e_j)$ and~$P'_i(\theta)(e_j)$, respectively. Introducing suitable costate equations, this can be reduced to~$2N$ additional equation solves.
\begin{lemma} \label{lem:adjoint}
Let~$\widehat{y}_i,\widehat{y}^T_i, \widehat{p}_i$ as well as~$\delta Y_i, \delta P_i$ be defined as in Proposition~\ref{prop:direcderiv}. Then there holds
\begin{align*}
(\widehat{y}_i,\delta Y_i )_{L^2}+ (\widehat{y}^T_i,\delta Y_i (T) )_{\R^n}+(\widehat{p}_i,\delta P)_{L^2}=(D_\theta \F^\eps_\theta(y_i)^\top(\mathbf{g}(y_i)^\top \zeta_i+\lbrack  D\mathbf{g}(y_i) \kappa_i \rbrack^\top p_i), \delta \theta)_{\mathcal{R}_\eps}
\end{align*}
where~$\zeta_i, \kappa_i \in W_T$ satisfy
\begin{align*}
-\dot{\zeta}_i&=\mathbf{A}(y_i,\theta)\zeta+ D_y \F^\eps_\theta(y_i)^\top \mathbf{g}(y_i)^\top\zeta_i + \lbrack D_y \mathbf{A}(y_i,\theta)^\top p\rbrack\kappa_i+\mathbf{Q}_1^\top \mathbf{Q}_1\kappa_i+\widehat{y}_i \\
\dot{\kappa}_i&=\mathbf{A}(y_i,\theta)^\top \kappa_i+\widehat{p}_i,\\
\zeta_i(T)&= Q_2^\top Q_2 \kappa (T)+\widehat{y}^T_i,~\kappa_i(0)=0.
\end{align*}
\end{lemma}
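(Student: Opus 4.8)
The plan is to treat $\zeta_i$ and $\kappa_i$ as Lagrange multipliers (costates) for the two sensitivity equations derived in Lemma~\ref{lem:existenceparametertostate}, and to reproduce the three target terms $(\widehat{y}_i,\delta Y_i)_{L^2}$, $(\widehat{y}^T_i,\delta Y_i(T))_{\R^n}$ and $(\widehat{p}_i,\delta P_i)_{L^2}$ appearing in Proposition~\ref{prop:direcderiv} by testing those equations against $\zeta_i,\kappa_i$ and integrating by parts in time. Since every quantity is linear in $(\delta Y_i,\delta P_i,\delta\theta)$, the identity reduces to a careful bookkeeping of boundary terms and operator transposes, and the adjoint equations are precisely reverse-engineered so that all $\delta Y_i$- and $\delta P_i$-dependence cancels, leaving only a source in $\delta\theta$.

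First I would test the sensitivity equation for $\delta P_i$ against $\kappa_i$ over $I$. Integration by parts in $-\dot{\delta P_i}$ produces the boundary contribution $-(\delta P_i(T),\kappa_i(T))_{\R^n}+(\delta P_i(0),\kappa_i(0))_{\R^n}$; the second term drops because $\kappa_i(0)=0$, and the first is rewritten via the terminal condition $\delta P_i(T)=Q_2^\top Q_2\,\delta Y_i(T)$. Choosing $\kappa_i$ to solve $\dot\kappa_i=\mathbf{A}(y_i,\theta)^\top\kappa_i+\widehat{p}_i$ makes the interior $\delta P_i$-terms collapse to exactly $(\widehat{p}_i,\delta P_i)_{L^2}$. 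This yields an expression for $(\widehat{p}_i,\delta P_i)_{L^2}$ in terms of $\delta Y_i$ (both interior and at $t=T$) together with the explicit $\delta\theta$-source $[\partial_\theta\mathbf{A}(y_i,\theta)\delta\theta]p_i$, after transposing the bilinear action $\delta Y_i\mapsto[D_y\mathbf{A}(y_i,\theta)\delta Y_i]p_i$ into $[D_y\mathbf{A}(y_i,\theta)^\top p_i]\kappa_i$ and $\mathbf{Q}_1^\top\mathbf{Q}_1\kappa_i$.

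Next I would add the explicit contributions $(\widehat{y}_i,\delta Y_i)_{L^2}+(\widehat{y}^T_i,\delta Y_i(T))_{\R^n}$ and then eliminate all remaining $\delta Y_i$-dependence by testing the $\delta Y_i$-sensitivity equation against $\zeta_i$. Integration by parts in $\dot{\delta Y_i}$ gives the boundary term $(\delta Y_i(T),\zeta_i(T))_{\R^n}$ (the $t=0$ term vanishing since $\delta Y_i(0)=0$) and moves $\mathbf{A}$ and $\mathbf{g}(y_i)D_y\F^\eps_\theta(y_i)$ onto $\zeta_i$ through their adjoints. The backward equation for $\zeta_i$ is then engineered so that its interior source $[D_y\mathbf{A}(y_i,\theta)^\top p_i]\kappa_i+\mathbf{Q}_1^\top\mathbf{Q}_1\kappa_i+\widehat{y}_i$ cancels exactly the collected interior $\delta Y_i$-terms, while its terminal condition $\zeta_i(T)=Q_2^\top Q_2\kappa_i(T)+\widehat{y}^T_i$ cancels the boundary $\delta Y_i(T)$-terms arising from both sensitivity equations. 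What survives are the two $\delta\theta$-sources, $(\mathbf{g}(y_i)D_\theta\F^\eps_\theta(y_i)\delta\theta,\zeta_i)_{L^2}$ and $([\partial_\theta\mathbf{A}(y_i,\theta)\delta\theta]p_i,\kappa_i)_{L^2}$; transposing $\mathbf{g}(y_i)$, $D_\theta\F^\eps_\theta(y_i)$ and the $D\mathbf{g}(y_i)$-action delivers the stated $\mathcal{R}_\eps$-inner product against $D_\theta\F^\eps_\theta(y_i)^\top(\mathbf{g}(y_i)^\top\zeta_i+[D\mathbf{g}(y_i)\kappa_i]^\top p_i)$.

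The routine but error-prone part will be the transposition bookkeeping: correctly identifying the adjoints of the maps $\delta Y\mapsto[D_y\mathbf{A}\,\delta Y]p$ and $\delta\theta\mapsto[\partial_\theta\mathbf{A}\,\delta\theta]p$ (both stemming from the $D\mathbf{g}^\top$-term in $\mathbf{A}$), and matching the two independent boundary contributions at $t=T$ so they annihilate one another. Existence and $W_T$-regularity of $\zeta_i,\kappa_i$ are not a genuine obstacle: the $\kappa_i$-equation is a forward linear ODE with bounded coefficients, solved first, and $\zeta_i$ is then obtained from a backward linear ODE whose terminal data and source depend only on the already-known $\kappa_i$. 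The coupled system is therefore triangular and uniquely solvable by the same Gronwall and linear-ODE arguments used earlier in the paper.
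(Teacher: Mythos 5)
Your proposal is correct and follows essentially the same route as the paper's proof: testing the $\delta P_i$-sensitivity equation against $\kappa_i$ and the $\delta Y_i$-sensitivity equation against $\zeta_i$, integrating by parts so that the terminal conditions $\delta P_i(T)=Q_2^\top Q_2\,\delta Y_i(T)$, $\kappa_i(0)=0$, $\delta Y_i(0)=0$ and $\zeta_i(T)=Q_2^\top Q_2\kappa_i(T)+\widehat{y}^T_i$ make all interior and boundary $\delta Y_i$- and $\delta P_i$-terms cancel upon addition, leaving exactly the two $\delta\theta$-sources $(\mathbf{g}(y_i)D_\theta\F^\eps_\theta(y_i)\delta\theta,\zeta_i)_{L^2}$ and $([\partial_\theta\mathbf{A}(y_i,\theta)\delta\theta]p_i,\kappa_i)_{L^2}$, which transpose to the stated $\mathcal{R}_\eps$-inner product. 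Your additional remark on the triangular solvability of the $(\kappa_i,\zeta_i)$-system is a sensible supplement that the paper leaves implicit.
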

\begin{proof}
For the sake of readability, we drop the subscript~$i$ in the following. By partial integration and Lemma \ref{lem:existenceparametertostate} we obtain
\begin{align*}
(\widehat{p},\delta P)_{L^2}&=(\dot{\kappa}-\mathbf{A}(y,\theta)^\top \kappa,\delta P)=(-\dot{\delta P}-\mathbf{A}(y,\theta) \delta P,\kappa)+( Q_2^\top Q_2 \delta Y(T),\kappa(T))_{\R^n}\\ &=(\lbrack D_y\mathbf{A}(y,\theta)\delta Y \rbrack p+ \mathbf{Q}_1^\top \mathbf{Q}_1 \delta Y+\lbrack \partial_\theta \mathbf{A}(y,\theta) \delta \theta \rbrack p,\kappa) +( \delta Y(T),\zeta(T)- \hat y_i^T)_{\R^n}
\end{align*}
and
\begin{align*}
&(\hat y_i,\delta Y )_{L^2}+ (\hat y_i^T,\delta Y (T) )_{\R^n}\\&=(-\dot{\zeta}\!-\!\mathbf{A}(y,\theta)\zeta\!-\! D_y \F^\eps_\theta(y)^\top \mathbf{g}(y)^\top\zeta- \lbrack D_y \mathbf{A}(y,\theta)^\top p\rbrack\kappa-\mathbf{Q}_1^\top \mathbf{Q}_1\kappa, \delta Y)_{L^2}+ (\hat y^T_i,\delta Y (T) )_{\R^n}\\
&=(\dot{\delta Y}-\mathbf{A}(y,\theta)^\top\delta Y- \mathbf{g}(y) D_y \F^\eps_\theta(y) \delta Y ,\zeta)_{L^2}\\
&\quad -( \lbrack D_y \mathbf{A}(y,\theta)^\top p\rbrack\kappa+\mathbf{Q}_1^\top \mathbf{Q}_1\kappa, \delta Y)_{L^2} -  (\delta Y (T), \zeta(T) )_{\R^n} \\
&=(\mathbf{g}(y) D_\theta \F^\eps_\theta(y)\delta \theta,\zeta)_{L^2} -( \lbrack D_y \mathbf{A}(y,\theta)^\top p\rbrack\kappa+\mathbf{Q}_1^\top \mathbf{Q}_1\kappa, \delta Y)_{L^2} -  (\delta Y (T), \zeta(T)- \hat y_i^T )_{\R^n}.
\end{align*}
Adding both equations finally yields
\begin{align*}
(\hat y_i,\delta Y )_{L^2}+ (\hat y_i^T,\delta Y (T) )_{\R^n}+(p_1,\delta P)_{L^2}&=(\mathbf{g}(y) D_\theta \F^\eps_\theta(y)\delta \theta,\zeta)_{L^2} +(\lbrack \partial_\theta \mathbf{A}(y,\theta) \delta \theta \rbrack p,\kappa)_{L^2} \\
&=(D_\theta \F^\eps_\theta(y)^\top(\mathbf{g}(y)^\top \zeta+\lbrack  D\mathbf{g}(y) \kappa \rbrack^\top p), \delta \theta)_{\mathcal{R}_\eps}
\end{align*}
which ends the proof.
\end{proof}
We arrive at the following characterization of the gradient~$\nabla \mathcal{J}_N(\theta)$.
\begin{theorem} \label{thm:gradient}
Let~$y_i,p_i,\zeta_i, \kappa_i \in W_T,~\widehat{\theta}_i \in \mathcal{R}_\eps$ be defined as in~Proposition~\ref{prop:direcderiv} and Lemma~\ref{lem:adjoint}. The gradient of~$\mathcal{J}_N$ at~$\theta$ is given by
\begin{align*}
\nabla \mathcal{J}_N(\theta)= \sum^N_{i=1} \omega_i \left( D_\theta \F^\eps_\theta(y_i)^\top(\mathbf{g}(y_i)^\top \zeta_i+\lbrack  D\mathbf{g}(y_i) \kappa_i \rbrack^\top p_i) +\widehat{\theta}_i\right)+ \gamma_\eps \theta.
\end{align*}
\end{theorem}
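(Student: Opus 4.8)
The plan is to read off $\nabla\mathcal{J}_N(\theta)$ directly from the directional derivative computed in Proposition~\ref{prop:direcderiv} by eliminating the state and adjoint sensitivities via Lemma~\ref{lem:adjoint}. By definition, the gradient is the Riesz representative in $\mathcal{R}_\eps$ of the bounded linear functional $\delta\theta\mapsto\mathcal{J}'_N(\theta)(\delta\theta)$; since $\mathcal{J}_N\in\mathcal{C}^1$ by Proposition~\ref{prop:direcderiv}, this representative exists and is unique, so it suffices to rewrite $\mathcal{J}'_N(\theta)(\delta\theta)$ as a single inner product $(\,\cdot\,,\delta\theta)_{\mathcal{R}_\eps}$ and identify its first argument.

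First I would take the expression for $\mathcal{J}'_N(\theta)(\delta\theta)$ from Proposition~\ref{prop:direcderiv}. The terms $(\widehat{\theta}_i,\delta\theta)_{\mathcal{R}_\eps}$ and $\gamma_\eps(\theta,\delta\theta)_{\mathcal{R}_\eps}$ are already inner products against $\delta\theta$, contributing $\omega_i\widehat{\theta}_i$ and $\gamma_\eps\theta$ to the gradient. The remaining three terms per index, namely $(\widehat{y}_i,\delta Y_i)_{L^2}+(\widehat{y}^T_i,\delta Y_i(T))_{\R^n}+(\widehat{p}_i,\delta P_i)_{L^2}$, still carry the implicit dependence of $\delta Y_i=Y'_i(\theta)(\delta\theta)$ and $\delta P_i=P'_i(\theta)(\delta\theta)$ on the direction through the linearized systems of Lemma~\ref{lem:existenceparametertostate}. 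These are precisely the quantities resolved by Lemma~\ref{lem:adjoint}, which, with the costates $\zeta_i,\kappa_i$ defined there, gives
\begin{align*}
(\widehat{y}_i,\delta Y_i)_{L^2}+(\widehat{y}^T_i,\delta Y_i(T))_{\R^n}+(\widehat{p}_i,\delta P_i)_{L^2}=(D_\theta\F^\eps_\theta(y_i)^\top(\mathbf{g}(y_i)^\top\zeta_i+\lbrack D\mathbf{g}(y_i)\kappa_i\rbrack^\top p_i),\delta\theta)_{\mathcal{R}_\eps}.
\end{align*}

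Substituting this identity for each $i=1,\dots,N$, weighting by $\omega_i$, and collecting all contributions into a single inner product against $\delta\theta$ expresses $\mathcal{J}'_N(\theta)(\delta\theta)$ as $(\,\cdot\,,\delta\theta)_{\mathcal{R}_\eps}$ whose first argument is exactly the asserted right-hand side; since $\delta\theta\in\mathcal{R}_\eps$ is arbitrary, uniqueness of the Riesz representative gives the claimed formula for $\nabla\mathcal{J}_N(\theta)$. I expect no genuine obstacle at this stage: the substantive work—introducing the two costate equations for $\zeta_i,\kappa_i$ and performing the integration-by-parts that trades the sensitivities $\delta Y_i,\delta P_i$ for an explicit dependence on $\delta\theta$—has already been carried out in Lemma~\ref{lem:adjoint}. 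The only point requiring care is the bookkeeping check that every $\delta\theta$-dependent term in $\mathcal{J}'_N(\theta)(\delta\theta)$ is accounted for exactly once, so that no contribution to $\widehat{\theta}_i$ or to the regularization term $\gamma_\eps\theta$ is lost or double-counted.
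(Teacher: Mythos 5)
Your proposal is correct and follows exactly the route the paper intends: the theorem is the immediate combination of Proposition~\ref{prop:direcderiv} and Lemma~\ref{lem:adjoint}, substituting the costate identity into the directional derivative and identifying the Riesz representative term by term. The paper treats this as so direct that it states the theorem without a written proof, and your bookkeeping (the $\widehat{\theta}_i$ and $\gamma_\eps\theta$ terms passing through unchanged, the sensitivity terms replaced via the lemma) matches it precisely.
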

\section{Numerical example}\label{sec:numericalexampl}
We finish this paper by applying the proposed learning approach to one particular instance of Problem~\eqref{def:openloopproblem}. Setting~$I=(0,T)$ and~$\Omega=(0,2\pi)$, we consider the parabolic bilinear optimal control problem
\begin{align*}
\min_{\mathcal{Y} \in L^2(I \times \Omega), u \in L^2(I;\R^3)} \left \lbrack \frac{1}{2} \int_I \|\mathcal{Y}(t)-\mathcal{Y}_d(t)\|^2_{L^2(\Omega)}+ \frac{\beta}{2}|u(t)|^2_{\R^3}~\mathrm{d}t \right \rbrack+\frac{\alpha}{2} \|\mathcal{Y}(T)-\mathcal{Y}_d(T)\|^2_{L^2(\Omega)}
\end{align*}
subject to
\begin{align} \label{eq:bilinearPDE}
\partial_t \mathcal{Y}-\bigtriangleup \mathcal{Y}+ \left( u_1 \chi_1 + u_2 \chi_2+ u_3 \chi_3 \right) \mathcal{Y}= 0,
\end{align}
as well as
\begin{align*}
\mathcal{Y}(t,x)=0 \quad \text{on}~I \times \partial \Omega,~\mathcal{Y}(0,x)= \mathcal{Y}_0(x) \quad \text{on}~\Omega.
\end{align*}
Here~$\alpha>0, \beta>0$, and~$\mathcal{Y}_d$ denotes a given desired state. The dynamics of this infinite-dimensional system can be influenced by choosing a time-dependent three-dimensional control input~$u \in L^2(I; \R^3) $ which acts on the subdomains~$\Omega_1=(0.5,1)$,~$\Omega_2=(2,2.5)$ and $\Omega_3=(4,4.5)$, respectively. The associated characteristic functions are denoted by~$\chi_i$,~$i=1,\dots,3$.

In order to fit this problem into the setting of the current manuscript, let~$\{\lambda_i, \varphi_i\} \in \R_+ \times L^2(\Omega)$ denote the first $n \in \N$ normalized eigenpairs of the Dirichlet Laplacian on~$\Omega$. Approximating the state dynamics~$\mathcal{Y}$ as well as the desired state by
\begin{align*}
\mathcal{Y}(t,x) \approx \sum^n_{i=1} Y_i(t) \varphi_i (t),~\mathcal{Y}_d(t,x) \approx \sum^n_{i=1} Y^i_d(t) \varphi_i (t),~
\end{align*}
we end up with
\begin{align} \label{def:discreteexample}
\min_{{Y} \in L^2(I;\R^10), u \in L^2(I;\R^3)} \left \lbrack \frac{1}{2} \int_I |Y(t)-Y_d(t)|^2+ \frac{\beta}{2}|u(t)|^2_{\R^3}~\mathrm{d}t+ \frac{\alpha}{2}|Y(T)-Y_d(T)|^2_{\R^10} \right \rbrack
\end{align}
subject to
\begin{align*}
\dot{Y}(t)+ AY(t)+ \sum^3_{i=1} u_i M_i Y(t) =0,~Y(0)=Y_0.
\end{align*}
where~$(Y_0)_i= (\mathcal{Y}_0, \varphi_i)_{L^2}$,~$i=1,\dots,n$, and the symmetric matrices~$A,M_i \in \R^{n\times n}$ are given by
\begin{align*}
A_{jk}= \begin{cases}  0 & j \neq k \\
\lambda_j & \text{else}
 \end{cases},~ (M_i)_{jk}=\int_\Omega \phi_j \phi_k \chi_i(x)~\mathrm{d}x, \quad i=1,2,3,~j,k= 1\dots, n.
\end{align*}
\subsection{Learning \& validation setup}
In the following, we determine an approximate optimal feedback law for~\eqref{def:discreteexample} by applying the learning approach detailed in Section~\ref{sec:learnfeedback}. The parametrized model~$V^\eps_\theta$ for the value function is given by realizations of residual networks, as  described in Section~\ref{subsec:residual}, with~$L_\eps=2$ layers,~$\operatorname{arch}(\theta)=(11,60,1)$ and activation function~$\sigma$ given by
\begin{align*}
\sigma(x)= \sin(x)+\cos(x).
\end{align*}
This yields a total of~$1440$ trainable parameters. We emphasize that the architecture as well as the activation function were chosen based on numerical testing. In particular, the present tests should not be mistaken as a \emph{quantitative} survey but as  a \emph{proof of concept} which highlights the potential of learned feedbacks for optimal control and puts a focus on the role played by the penalty parameters~$\gamma_1$ and~$\gamma_2$.

Given a fixed reference vector~$\bar{Y}_0 $, we randomly generate a set~$\mathbf{Y}_0$ of~$130$ initial conditions by sampling uniformly from the closure of~$B_1(\bar{Y}_0)$,
Subsequently, these are split into a training set~$\mathbf{Y}^t_0$ of $N=30$ initial conditions, which is used in the learning problem~\eqref{eq:learningprobfinite} together with uniform weights~$w_j=1/N$, and a validation set~$\mathbf{Y}^v_0= \mathbf{Y}_0 \setminus \mathbf{Y}^T_0$ which we later utilize to assess the performance of the obtained feedback.

In order to obtain a candidate for the optimal network parameters~$\theta^*_\eps$, a Barzilai-Borwein method
~\cite{BaBo19}, is applied to the learning problems \eqref{eq:learningprobfinite}, based on the reduced objective functional introduced in~\eqref{eq:reducedobj} as well as the characterization of its gradient in Theorem~\ref{thm:gradient}. For every~$Y_0 \in \mathbf{Y}^t_0$, this approach entails the computation of the state~$Y \coloneqq Y_\theta (Y_0)$ and the adjoint state~$P \coloneqq P_\theta (Y_0)$ which satisfy
\begin{align} \label{def:exampleneuralequations}
\dot{Y} (t)+ \left(A+ \sum^3_{i=1} {F}^\eps_\theta(t,Y(t))_i  M_i \right) Y(t) &=0,~Y(0)=Y_0 \notag \\
-\dot{P} (t) + \left( A+ \sum^3_{i=1} {F}^\eps_\theta(t,Y(t))_i  M_i  \right) P(t) &= Y(t)-Y_D(t),~P(T)= Y(T)-Y_D(T)
\end{align}
as well as the costates~$K \coloneqq K_\theta(Y_0)$ and~$Z \coloneqq Z_\theta(Y_0)$ with
\begin{align*}
\dot{K}(t)+ \left( A+ \sum^3_{i=1} {F}^\eps_\theta(t,Y(t))_i  M_i  \right) K(t)= \widehat{P}(t)
\end{align*}
and
\begin{multline*}
- \dot{Z} (t)+ \left( A+ \sum^3_{i=1} {F}^\eps_\theta(t,Y(t))_i  M_i  + D_y {F}^\eps_\theta(t,Y(t))^\top \begin{pmatrix}
Y_j(t)^\top M_1 \\
Y_j(t)^\top M_2 \\
Y_j(t)^\top M_3
\end{pmatrix}  \right) Z(t) \\
=- D_y {F}^\eps_\theta(t,Y(t))^\top \begin{pmatrix}
Y(t)^\top M_1 \\
Y(t)^\top M_2 \\
Y(t)^\top M_3
\end{pmatrix}  Z (t) +K (t)+ \widehat{Y} (t)
\end{multline*}
equipped with the boundary conditions
\begin{align*}
K(0)=0, \quad Z(T)= \alpha K(T)+ \widehat{Y}^T_j
\end{align*}
where~$\widehat{Y},\widehat{Y}^T$ and~$\widehat{P}$ are defined in analogy to Proposition~\ref{prop:direcderiv}. Note that this system is not fully coupled, i.e. in practice, we first solve the nonlinear closed-loop equation using a Radau time-stepping scheme and then, successively treat the adjoint and costate equations by an implicit Euler method. This can be done in parallel for various initial conditions to achieve additional speed-up. Moreover, the adjoint state~$P$ and costate~$K$ only need to be computed if~$\gamma_2 >0$. The gradient of the reduced objective functional~$\mathcal{J}_N$ in~\eqref{eq:learningprobfinite} at an admissible~$\theta$ is then obtained as
\begin{align*}
 \frac{1}{30} \sum_{Y \in \mathbf{Y}^t_0} \int_I  \left( D_\theta F^\eps_\theta(t,Y_\theta (Y_0)(t))^\top \left( B^\theta_Y (t) Z_\theta (Y_0) (t)+ B^\theta_K(t) P_\theta (Y_0) (t) \right)~\mathrm{d}t +\widehat{\theta}(Y_0)\right).
\end{align*}
where we set
\begin{align*}
B^\theta_Y (t) \coloneqq
\begin{pmatrix}
Y_\theta (Y_0)(t)^\top M_1 \\
Y_\theta (Y_0)(t)^\top M_2 \\
Y_\theta (Y_0)(t)^\top M_3
\end{pmatrix}
, \quad
B^\theta_K(t) \coloneqq
\begin{pmatrix}
K_\theta (Y_0)(t)^\top M_1 \\
K_\theta (Y_0)(t)^\top M_2 \\
K_\theta (Y_0)(t)^\top M_3
\end{pmatrix},
\end{align*}
integration has to be understood componentwise and~$\widehat{\theta}(Y_0)$ is as in Proposition~\ref{prop:direcderiv}.

Once the network is determined, we compute the state~$Y_{\theta} (Y_0)$ and adjoint~$P_\theta (Y_0)$ for every~$Y_0 \in \mathbf{Y}_0$ from~\eqref{def:exampleneuralequations} and set~$U_\theta(Y_0)\coloneqq\mathcal{F}^\eps_{\bar{\theta}}(Y_\theta(Y_0))$.
Subsequently we determine a stationary point~$(\bar{Y}(Y_0), \bar{U}(Y_0))$ of~\eqref{def:discreteexample},~$Y_0 \in \mathbf{Y}_0$, by applying a Barzilai-Borwein gradient method to its control-reduced formulation. The associated adjoint state is denoted by $\bar{P}(Y_0)$. At this point, it should be stressed that both, the open loop as well as the feedback learning problem, are nonconvex. As a consequence, we cannot ensure global optimality of the computed stationary points and, in particular, both methods might provide different results. For the present example, open loop and learned feedback controls are comparable. Moreover, for every~$Y_0 \in \mathbf{Y}_0$, we have~$J(\bar Y(Y_0), \bar U(Y_0) ) \geq J( Y_\theta (Y_0),  U_\theta (Y_0) )$. In order to assess the performance of open loop and feedback controls, let~$Y^{ad}_{0}\subset \mathbf{Y}_0$ be either~$Y^{ad}_{0}=\mathbf{Y}^t_0$ or~$Y^{ad}_{0}=\mathbf{Y}^v_0$ and consider the relative difference between the averaged objective functional values:
\begin{align*}
\operatorname{Err}_{\mathcal{J}} \coloneqq \frac{\sum_{Y_0 \in Y_{ad}} J(Y_\theta (Y_0) , U_\theta (Y_0))-\sum_{Y_0 \in Y_{ad}} J(\bar Y (Y_0) ,\bar U (Y_0)) }{\sum_{Y_0 \in Y_{ad}} J(\bar Y (Y_0) ,\bar U (Y_0))}
\end{align*}
as well as the associated normalized mean squared error of~$J(Y_\theta (\cdot) , U_\theta (\cdot))$:
\begin{align*}
\operatorname{Err}_{J} \coloneqq \frac{\sum_{Y_0 \in Y_{ad}} (J(Y_\theta (Y_0) , U_\theta (Y_0))- J(\bar Y (Y_0) ,\bar U (Y_0)))^2 }{\sum_{Y_0 \in Y_{ad}} J(\bar Y (Y_0) ,\bar U (Y_0))^2}.
\end{align*}
The normalized mean-squared errors of the state, $\operatorname{Err}_{Y}$, adjoint, $\operatorname{Err}_{P}$, and of the control,~$\operatorname{Err}_{U}$, are defined analogously. Moreover, to quantify the influence of the penalty parameters~$\gamma_1$ and~$\gamma_2$, we define
\begin{align*}
\operatorname{Err}_{V} \coloneqq \frac{\sum_{Y_0 \in Y_{ad}} \int_I |V^\eps_\theta (t,Y_\theta(Y_0)(t))- J_t(Y_\theta(Y_0),U_\theta(Y_0)(t))|^2 ~\mathrm{d}t  }{\sum_{Y_0 \in Y_{ad}} \int_I | J_t(Y_\theta(Y_0),U_\theta(Y_0)(t))|^2 ~\mathrm{d}t}.
\end{align*}
as well as
\begin{align*}
\operatorname{Err}_{\partial V} \coloneqq \frac{\sum_{Y_0 \in Y_{ad}} \int_I |\partial_y V^\eps_\theta (t,Y_\theta(Y_0)(t))- P_\theta (Y_0)(t)|^2 ~\mathrm{d}t  }{\sum_{Y_0 \in Y_{ad}} \int_I | P_\theta (Y_0)(t)|^2 ~\mathrm{d}t}.
\end{align*}
For $Y^{ad}_0= Y^t_0$, these terms correspond to the relative sizes of the additional penalties in~\eqref{eq:learningprobfinite}.
Finally, we also want to compare~$V^\eps_\theta$ with the optimal value function~$V^*$. Of course,~$V^*$ can neither be given analytically nor can it be computed exactly. As a remedy, we recall that if~$V^*$ is sufficiently regular and~$(\bar{Y}(Y_0), \bar{U}(Y_0) )$ is a minimizing pair of~\eqref{def:discreteexample} with adjoint state~$\bar{P}(Y_0)$, we have
\begin{align*}
V^*(t, \bar{Y}(Y_0)(t))= J_t(\bar{Y}(Y_0),\bar{U}(Y_0)) \quad \text{as well as} \quad \partial_y V^*(t, \bar{Y}(Y_0)(t))= \bar{P}(Y_0)(t)
\end{align*}
for all~$t \in I$. As a consequence, setting
\begin{align*}
d (V^*, V^\eps_\theta)=\frac{\sum_{Y_0 \in Y_{ad}} \int^T_0 | V^\eps_\theta(t, \bar{Y}(Y_0)(t))- J_t(\bar{Y}(Y_0),\bar{U}(Y_0)) |^2 ~\mathrm{d}t}{\sum_{Y_0 \in Y_{ad}} \int^T_0 |J_t(\bar{Y}(Y_0),\bar{U}(Y_0)) |^2 ~\mathrm{d}t}.
\end{align*}
as well as
\begin{align*}
d (\partial V^*, \partial V^\eps_\theta)=\frac{\sum_{Y_0 \in Y_{ad}} \int^T_0 | \partial_y V^\eps_\theta(t, \bar{Y}(Y_0)(t))- \bar{P}(Y_0)(t) |^2 ~\mathrm{d}t}{\sum_{Y_0 \in Y_{ad}} \int^T_0 |\bar{P}(Y_0)(t) |^2 ~\mathrm{d}t}.
\end{align*}
provides a suitable ``distance'' for the comparison of~$V^*$ and~$V^\theta_\eps$.
\subsection{Validation results}
As a concrete example, we set~$T=2,~\beta=0.01,~\alpha=0.25 $ and~$\mathcal{Y}_d(t,x)= x^2/10$, i.e., we try to steer the system towards a parabola. Note that there is no control input~$u \in L^2(I;\R^3)$ such that the corresponding solution~$\mathcal{Y}$ of the PDE~\eqref{eq:bilinearPDE} satisfies~$\mathcal{Y}(t)= \mathcal{Y}_d$. The parabolic binlinear control problem is approximated using~$n=10$ eigenfunctions. All computations were carried out in Matlab 2019 on a notebook with~$32$ GB RAM and an Intel\textregistered Core\texttrademark ~i7-10870H CPU@2.20 GHz.

In order to compute an approximately optimal feedback law for this problem, we solve~\eqref{eq:learningprobfinite} for various penalty parameter configurations~$\gamma_1, \gamma_2 \in \{0,0.1,1\}$. The resulting normalized errors can be found in Table~\ref{tab:val1}, for~$Y^{ad}_0=\mathbf{Y}^t_0$, and Table~\ref{tab:val2}, for~$Y^{ad}_0=\mathbf{Y}^v_0$. Comparing their individual entries, we observe that there is (almost) no difference in performance between the training and the validation sets. This means that, while the utilized  networks are rather simple and only comprise a small number of trainable parameters, the corresponding learned feedback controls generalize well to initial conditions which are not contained in the training set.

Indeed, on the one hand \emph{all} computed networks provide feedback controls which perform similarly to their open loop counterparts. This is manifested in very small averaged errors  for the objective functional, i.e.~$\operatorname{Err}_{\mathcal{J}}$ and $\operatorname{Err}_{{J}}$, the states and adjoint states,~$\operatorname{Err}_{{Y}}$ and $\operatorname{Err}_{{P}}$, as well as the controls,~$\operatorname{Err}_{{U}}$. These start to (slowly) deteriorate as~$\gamma_1$ and/or~$\gamma_2$ grow. However, cf. the explanation in Section~\ref{sec:learnfeedback}, this is expected: For~$\gamma_1>0$ and/or~$\gamma_2 >0$, the learned feedback has to strike a balance between minimizing~$J(Y_\theta (\cdot), U_\theta(\cdot))$ and keeping the penalty terms small, hence the slightly larger error.

On the other hand, the picture looks different once we consider the errors associated to the approximation of the value function, i.e.,~$\operatorname{Err}_{V}$,~$\operatorname{Err}_{\partial V}$ as well as $d(\partial V^*, \partial V^\eps_\theta)$ and $d (\partial V^*, \partial V^\eps_\theta)$.  Here $\gamma_1>0$ and/or~$\gamma_2 >0$ have a significant influence on $d( V^*,  V^\eps_\theta)$ and~$d(\partial V^*, \partial V^\eps_\theta)$ while
 the other normalized mean squared errors remain relatively small. Moreover, we have~$\operatorname{Err}_{V} \approx d ( V^*,  V^\eps_\theta) $ and~$\operatorname{Err}_{\partial V} \approx d (\partial V^*, \partial V^\eps_\theta) $ on the test as well as on the validation set. Hence, large values for these terms are a reliable indicator for structural differences between~$V^\eps_\theta$ and~$V^*$ and/or~$\partial_y V^\eps_\theta$ and~$ \partial_y V^*$, respectively.

Now, while~$\gamma_1=\gamma_2=0$ provides a very good approximation to the open loop optimal control, it performs the worst in terms of approximating the optimal value function and its derivative. This is related to two observations. First, in this case, the learning problem~\eqref{eq:learningprobfinite} only depends on the derivative~$\partial_y V^\theta_\eps$ but \emph{not} on the value function~$V^\eps_\theta$. Since primitives are not unique, approximating~$V^*$ by~$V^\eps_\theta$ is unlikely.
Second, due to the absence of~$V^\eps_\theta$ in the problem, some of the parameters in the model are not trainable. In fact, for $\gamma_1=\gamma_2=0$, there holds~$\partial_{W_{12}} \mathcal{J}_N(\theta)=0$ for every admissible~$\theta$.

Once we increase~$\gamma_1$ and~$\gamma_2$, this is no longer the case. Hence, we observe rapid decrease for $d( V^*, V^\eps_\theta)$ and $d(\partial V^*,\partial V^\eps_\theta)$. Most remarkably, the improvement for both is, to some extend, already visible for~$\gamma_1>0$ and~$\gamma_2=0$. In this setting, applying the gradient method neither requires computing the adjoint state~$P$ nor the costate~$K$ which limits the cost of every gradient step to~$2N=60$ ODE solve. Quite the contrary, increasing~$\gamma_2 >0$ but keeping~$\gamma_1=0$ fixed, there is \emph{no} improvement for $d(\ V^*, V^\eps_\theta)$. This further backs up our reasoning given for the case of~$\gamma_1=\gamma_2=0$.

Consequently, the computed results indicate that the best balance between finding an optimal control and approximating the value function is achieved by a careful choice of~$\gamma_1,\gamma_2 >0$. Moreover, they highlight two important points: First, the presented learning approach indeed allows to compute semiglobal optimal feedback laws~$F^\eps_\theta$ for higher dimensional problems and, thus, to some extent, alleviates the curse of dimensionality. Second, incorporating additional terms into the learning problem penalizing the violation of the dynamic programming principles~\eqref{eq:dynamicalprog}, allows to compute a good approximation~$V^\eps_\theta$ of the optimal value function on the fly. As stated initially, the present example should be understood as a proof of concept and, following these first promising results, we believe that this approach to feedback learning deserves further investigations, both, from the theoretical and the numerical side. For example, it would be interesting to explore systematic ways of choosing the penalty parameters~$\gamma_1,\gamma_2$.
However, this goes beyond the scope of the current paper and is left for future work.
\begin{table}[!htb]
\centering
    \medskip
\begin{tabular}{  l c c c c c  }

\toprule
$\qquad \text{Penalty}$  & $\text{Err}_{\mathcal{J}}$ & $\text{Err}_{Y}$ &  $\text{Err}_{P}$ & $\text{Err}_{U}$\\
\midrule
$\gamma_1= 0, \gamma_2=0$ & $ 0.15 \%$ & $0.04  \%$& $ 0.12 \%$ & $ 2.4 \%$  \\
\midrule
$\gamma_1= 0.1, \gamma_2=0.1$ & $0.36 \%$ & $0.1 \%$ & $0.24 \%$ & $5.5 \%$
\\
\midrule
$\gamma_1= 0.1, \gamma_2=0$ & $0.29 \%$ & $0.1 \%$ & $0.85 \%$ & $ 4.4 \%$
\\
\midrule
$\gamma_1= 1, \gamma_2=1$ & $ 0.64 \%$ & $ 0.25 \%$& $1 \%$ & $8.65 \%$ \\
\midrule
$\gamma_1= 0, \gamma_2=1$ & $ 0.1 \%$ & $ 0.05 \%$& $0.26 \%$ & $2.1 \%$ \\
\bottomrule
\end{tabular}
\centering
    \medskip
\begin{tabular}{  l c c c c c  }

\toprule
$\qquad \text{Penalty}$  & $\text{Err}_{J}$ & $\text{Err}_{V}$ &  $\text{Err}_{\partial V}$ & $d(V^\eps_\theta;V^*)$ & $d(\partial_y V^\eps_\theta;\partial_y V^*)$ \\
\midrule
$\gamma_1= 0, \gamma_2=0$ & $ 0.0003 \%$ & $ 79 \%$& $33 \%$ & $78.8 \%$ & $33.5 \%$ \\
\midrule
$\gamma_1= 0.1, \gamma_2=0.1$ & $ 0.001 \%$ & $0.03 \%$ & $7.4 \%$ & $0.03 \%$ & $7 \%$
\\
\midrule
$\gamma_1= 0.1, \gamma_2=0$ & $ 0.001 \%$ & $0.02 \%$ & $12.5 \%$ & $0.02 \%$ & $12.1 \%$
\\ \midrule
$\gamma_1= 1, \gamma_2=1$ & $ 0.005 \%$ & $ 0.007 \%$& $4.5 \%$ & $0.01 \%$ & $3.5 \%$
\\ \midrule
$\gamma_1= 0, \gamma_2=1$ & $ 0.003 \%$ & $ 88.8 \%$& $6.4 \%$ & $88.5 \%$ & $6.4 \%$\\
\bottomrule
\end{tabular}

\caption{Results on training set i.e.~$Y^{ad}_0=\mathbf{Y}^t_0$.}
\label{tab:val1}
\end{table}

\begin{table}[!htb]
\centering
    \medskip
\begin{tabular}{  l c c c c c  }

\toprule
$\qquad \text{Penalty}$  & $\text{Err}_{\mathcal{J}}$ & $\text{Err}_{Y}$ &  $\text{Err}_{P}$ & $\text{Err}_{U}$ \\
\midrule
$\gamma_1= 0, \gamma_2=0$ & $ 0.23 \%$ & $0.06  \%$& $ 0.57 \%$ & $ 4.42 \%$ \\
\midrule
$\gamma_1= 0.1, \gamma_2=0.1$ & $0.51 \%$ & $0.15 \%$ & $1.1 \%$ & $9.2 \%$
 \\ \midrule
$\gamma_1= 0.1, \gamma_2=0$ & $0.47 \%$ & $0.16 \%$ & $2.8 \%$ & $8.7 \%$
 \\ \midrule
$\gamma_1= 1, \gamma_2=1$ & $ 0.85 \%$ & $ 0.35 \%$ & $ 6\%$ & $13.5 \%$
 \\ \midrule
$\gamma_1= 0, \gamma_2=1$ & $ 0.25 \%$ & $ 0.1  \%$ & $ 1.3\%$ & $4.8 \%$\\
\bottomrule
\end{tabular}
\centering
    \medskip
\begin{tabular}{  l c c c c c  }

\toprule
$\qquad \text{Penalty}$  & $\text{Err}_{J}$ & $\text{Err}_{V}$ &  $\text{Err}_{\partial V}$ & $d(V^\eps_\theta;V^*)$ & $d( \partial_y V^\eps_\theta;\partial_y V^*)$ \\
\midrule
$\gamma_1= 0, \gamma_2=0$ & $ 0.002 \%$ & $ 78.7 \%$& $33.6 \%$ & $78.6 \%$ & $33.9 \%$ \\
\midrule
$\gamma_1= 0.1, \gamma_2=0.1$ & $0.007 \%$ &  $0.03 \%$ & $8.9 \%$ & $0.03 \%$  & $7.9 \%$ \\
\midrule
$\gamma_1= 0.1, \gamma_2=0$ & $0.008 \%$ &  $0.02 \%$ & $15.1 \%$ & $0.02  \%$  & $13.1 \%$ \\ \midrule
$\gamma_1= 1, \gamma_2=1$ & $ 0.02 \%$ & $0.009 \%$ & $ 9.8 \%$ & $ 0.01 \%$ & $ 4.2 \%$ \\
\midrule
$\gamma_1= 0, \gamma_2=1$ & $ 0.002 \%$ & $ 88 \%$ & $ 8.6 \%$ & $ 88 \%$ & $ 7.1 \%$ \\
\bottomrule
\end{tabular}
\caption{Results on validation set i.e.~$Y^{ad}_0=\mathbf{Y}^v_0$.}
\label{tab:val2}
\end{table}
\appendix
\section{Condition \eqref{eq:kk10}} \label{app1}

Here we address condition \eqref{eq:kk10}.
Define  $N_\epsilon=\lceil \frac{2\widehat M}{\eps} \rceil,\,\tilde M:=\eps N_\eps$  and introduce the equidistant grid  $G=\{-\tilde M, (1-N_\eps)\eps, \dots,-\eps,0,\eps, \dots,(N_\eps-1)\eps, \tilde M\}$. Next endow the hypercube $[-\tilde M, \tilde M]^{n+1}$ with the $(n+1)-$ dimensional product of the grid $G$. These grid points define
$\{Q_i\}_{i=1}^{(2N_\eps)^{n+1}}$ closed subhypercubes of dimension $\eps^n$ whose union covers $\bar K=[0,T] \times \bar B_{2\tilde M}(0)$.

We extend this  $n+1$-dimensional grid by adding $k\ge \lceil\frac{1}{2}\sqrt{n}\rceil+1$ layers (again all of dimension $\eps^n)$, to the surfaces of the preexisting grid, resulting in $\tilde N_\eps=(2N_\eps +2k)^{n+1}$ hypercubes whose union covers $ [- \tilde M-k\eps,\tilde M+k\eps]^{n+1}$. The subhypercubes are ordered in such a manner that the interiors ones $\{Q_i\}_{i=1}^{(2N_\eps +2(k-1))^{n+1}}$ are assembled first and the ones with a boundary face $\{Q_i\}_{i=(2N_\eps +2(k-1)+1)^{n+1}}^{(2N_\eps +2k)^{n+1}}$ come last. The set of indices corresponding to interior hypercubes are denoted by $\mathcal{I}$, those to boundary hypercubes by $\mathcal{F}$.

Next we introduce a staggered grid and place a node $x_i=(t_i,y_i)$ at the barycenter of each of the $Q_i,\, i=1, \dots,(2N_\eps +2k)^{n+1}$. We shall use the standard mollifier of radius $r_\eps$ defined by
\begin{equation*}
\psi(x)= \left\{
              \begin{array}{ll}
                \exp(\frac{1}{|\frac{x}{r_\eps}|^2-1}), & \text{ for} |x|\le r_\eps \\[1.4ex]
                0, & \text{ for} |x|\le r_\eps,
              \end{array}
            \right.
\end{equation*}
where $r_\eps=\eps(\frac{1}{2}\sqrt{n}+.1)$. Note that by adding $.1$ in the previous expression the cube $[-\frac{\eps}{2},\frac{\eps}{2}]^n$ is contained in the interior of the support of $psi$.   Finally we introduce
$\psi_j(x)= \psi(x-x_j)$, for ${j\in \mathcal{I}\cup \mathcal{F}}$ and
\begin{equation*}
\varphi_j= \frac{\psi_j}{\sum_{i\in \mathcal{I}\cup \mathcal{F}} \psi_i}, \text{ for } j \in \mathcal{I}.
\end{equation*}
Let us deduce the following properties:
\begin{itemize}
\item[(i)] For each ${j\in \mathcal{I}\cup \mathcal{F}}$ we have $supp\, \psi_j=\bar K_j$ where $K_j = \{x: |x-x_j|< r_\eps\}$.
\item[(ii)] By construction there exists $\mathfrak{m} $ such that    $\text{card} \{j: \psi_j(x) \neq 0 \} \le \mathfrak{m}, \quad  \forall x \in \R^{n+1},  \text{ and } \forall {j\in \mathcal{I}\cup \mathcal{F}}$, for each $\eps\in (0,\eps_0]$.
\item[(iii)] For each $j\in \mathcal{I}$ the denominator in the definition of $\varphi_j$ is different from zero. Hence $\varphi_j$ is well-defined
with $supp\, \varphi_j=\bar K_j$ for  $j\in \mathcal{I}$ and $\varphi_j \colon\R^{n+1} \to [0,1]$, and it is  $\mathcal{C}^\infty$ smooth.
\item[(iv)] $\bar{K} \subset \bigcup_{j\in \mathcal{I}} Q_j\subset \bigcup_{j\in \mathcal{I}} K_j$.
\item[(v)]     $\text{card} \{j: \varphi_j(x) \neq 0 \} \le \mathfrak{m}, \quad  \forall x \in \R^{n+1},  \text{ and } \forall {j\in \mathcal{I}}$, for each $\eps\in (0,\eps_0]$.
\item[(vi)] Due to the choice of $r_\eps$ the functions $\psi$ are uniformly bounded from below on $Q_j$ for each $j$, independent of $\eps\in (0,\eps_0]$. Moreover due to the boundedness of $\psi$ and by the definition of $\mathfrak{m}$, there exists $\nu>0$ such that
\begin{equation*}
\sum_{i\in \mathcal{I}\cup \mathcal{F}} \psi_i(x) \ge \nu , \forall x \in Q_j, \text{ with } j \in \mathcal{I}\cup \mathcal{F},
\end{equation*}
and thus in particular $\sum_{i\in \mathcal{I}\cup \mathcal{F}} \psi_i(x) \ge \nu , \forall x \in \bar K$.
\item[(vii)] $$ supp\; \varphi_j \cap \bar K = \emptyset\; \forall j \in \mathcal{F}. $$
This is a consequence of the fact that for $j \in \mathcal{F}$ we have
$dist(x_j, \partial ([-\tilde M, \tilde M]^n))= \eps[(k-1)+\frac{1}{2}]$ and thus
$dist(\partial K_j, \partial ([-\tilde M, \tilde M]^n)) \le \eps[(k-1)+\frac{1}{2}-r_\eps] = k-\frac{1}{2}(1+\sqrt{n})-.1>\eps (k-1 - \frac{1}{2}\sqrt{n})>0$.
\item[(viii)] $\sum_{i\in \mathcal{I}}\varphi_i =1, \; \forall x \in \bar K$. This is a consequence of $(vii)$ and the definition of $\varphi_j$.
\item[(ix)]
$\|D^{j}\varphi_i\|_{C(\bar K_i\cap \bar K)} \le\bar \mu \eps^{-j}, \text{ for some } \bar \mu \text{ independent of }  i\in \mathcal{I}, \; \text{ and } j\in\{1,2\}.$
\end{itemize}
Once we have verified $(ix)$, all the properties demanded in
\eqref{eq:kk10} on the partition of unity $\{\varphi_i\}_{i\in \mathcal{I}}$ subordinate to~$K_i$ will be satisfied.

In the following calculations we repeatedly use that $ \nabla \sum _{i\in \mathcal{I}}\varphi_i(x)=0$ for $x\in \bar K$. This follows from $(viii)$. As short calculation shows that for each $j\in \mathcal{I}$ , each $x\in \bar K$, and $k,\ell \in \{1,\dots,n\}$
\begin{equation*}
\partial_{x_k} \varphi_j(x) = \frac{\partial_{x_k} \psi_j(x)}{\sum_{i\in \mathcal{I}} \psi_i(x)}, \quad \partial_{x_\ell} \partial_{x_k} \varphi_j(x)= \frac{\partial_{x_\ell}  \partial_{x_k} \varphi_j(x)  \sum_{i\in \mathcal{I}} \psi_i(x) - \psi_j \sum_{i\in \mathcal{I}} \partial_{x_\ell}  \partial_{x_k} \psi_i(x)     }{(\sum_{i\in \mathcal{I}} \psi_i(x))^2  },
\end{equation*}
where we use that $\partial_{x_k}\sum_{i\in \mathcal{I}} \psi_i(x)=0$ for $x\in \bar K$.

To obtain the required estimates we introduce for $\eta>0$
\begin{equation*}
\psi_\eta(x)= \left\{
              \begin{array}{ll}
                \exp(\frac{1}{|\frac{x}{\eta}|^2-1}), & \text{ for} |x|\le \eta \\[1.4ex]
                0, & \text{ for} |x|\le \eta.
              \end{array}
            \right.
\end{equation*}
Then we have
\begin{equation*}
\partial_{x_k} \psi_\eta=- \psi_\eta \, \frac{2x_k}{\eta^2 (|\frac{x}{\eta}|^2-1)^2},
\end{equation*}
\begin{equation*}
\begin{array}l
(\partial_{x_k})^2 \psi_\eta=
\frac{2\psi_\eta}{\eta^2(|\frac{x}{\eta}|^2-1)^4}\big[\frac{2x_k^2}{\eta^2}-(|\frac{x}{\eta}|^2-1)^2 +\frac{4x_k^2}{\eta^2}(|\frac{x}{\eta}|^2-1)   \big],\\[1.5ex]
\end{array}
\end{equation*}
and for $k\ne \ell$
\begin{equation*}
\partial_{x_\ell} \partial_{x_k}\psi_\eta=
\frac{2\psi_\eta\,x_\ell x_k}{\eta^2(|\frac{x}{\eta}|^2-1)^4} \big[ \frac{2}{\eta^2} + \frac{4}{\eta^2}(|\frac{x}{\eta}|^2 -1))\big]=
\frac{2\psi_\eta\, x_\ell x_k}{\eta^2(|\frac{x}{\eta}|^2-1)^4} \big[ \frac{-2}{\eta^2} + \frac{4}{\eta^4}|x|^2 \big].
\end{equation*}
Considering the behavior of $\partial_{x_k} \psi_\eta$ and $\partial_{x_\ell} \partial_{x_k}\psi_\eta$ separately on the ball $B_{\frac{\eta}{2}}(0)$ and its complement in
$B_{{\eta}}(0)$, it  follows that these functions behave like $O(\frac{1}{\eta})$ and  $O(\frac{1}{\eta^2})$. Applying these estimates in the
expressions for  the first and second derivatives for $\varphi_j$ and using the lower bound established in $(vi)$ we obtain $(ix)$.

\section{Perturbation results} \label{app:perturbation}
Here we collect pertinent existence and stability results for  dynamical systems. The constant  $\widehat{M}(0)$ appearing  below relates to Assumption $\textbf{A.2}$.
\begin{prop} \label{prop:s}
Let~$\mathbf{y} \in \mathcal{C}(Y_0;W_T)$,~$\mathbf{y}(y_0) \in \mathcal{Y}_{ad}$ for all~$y_0 \in Y_0$,~$\delta \mathbf{v} \in \mathcal{C}(Y_0;L^2(I;\R^n))$, as well as~$\delta \mathbf{y}_0 \in \mathcal{C}(Y_0;\R^n)$ be given. Moreover let~$A \colon I \times \R^n \to \R^{n \times n} $ be  continuous, and denote by~$\mathbf{A} \colon L^\infty(I;\R^n) \to \mathcal{B}(L^2(I;\R^n))$ the induced Nemitsky operator i.e.
\begin{align*}
\mathbf{A}(y) \delta y = A(t,y(t)) \delta y(t) \quad \forall \delta y \in L^2(I;\R^n),~y \in L^\infty(I;\R^n)
\end{align*}
and a.e.~$t \in I$. Then there is~$\delta \mathbf{y} \in \mathcal{C}(Y_0;W_T)$ such that~
\begin{align} \label{eq:linearensemble}
\dot{\delta y}= \mathbf{A}(y)\delta y+v,~\delta y (0)= \delta y_0
\end{align}
for~$y\coloneqq \mathbf{y}(y_0)$,~$\delta v\coloneqq \delta \mathbf{v}(y_0) $,~$\delta y_0 \coloneqq \delta \mathbf{y}(y_0)$ and all~$y_0 \in Y_0$. It satisfies
\begin{align} \label{eq:aprioriindependent}
\|\delta \mathbf{y}(y_0)\|_{W_T} \leq C \left( \|\delta \mathbf{v}(y_0)\|_{L^2}+|\delta \mathbf{y}_0(y_0)| \right)
\end{align}
for some~$C>0$ depending continuously on $\max_{(\tau,y)\in I \times \bar{B}_{2 \widehat{M}(0)} }\|A(\tau,y)\|_{\R^{n \times n}}$, and  independent of $y_0\in Y_0$.
\end{prop}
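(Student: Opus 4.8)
The plan is to solve \eqref{eq:linearensemble} pointwise in $y_0 \in Y_0$ as a linear ODE with bounded, measurable coefficient matrix and square-integrable forcing, to extract a Gronwall-type a priori bound whose constant depends only on a uniform bound for $A$ along the trajectory, and finally to upgrade pointwise solvability to $\mathcal{C}(Y_0;W_T)$-regularity via continuous dependence on the data.

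First, I would fix $y_0 \in Y_0$ and set $y \coloneqq \mathbf{y}(y_0) \in \mathcal{Y}_{ad}$. Since $W_T$ embeds continuously into $\mathcal{C}(\bar I;\R^n)$ and $\widehat M = M_{Y_0}\,\|\imath\|_{\mathcal{B}(W_T,\mathcal{C}(I;\R^n))}$, the bound $\wnorm{y}\le 2M_{Y_0}$ yields $|y(t)|\le 2\widehat M$ for all $t\in\bar I$. Consequently the map $t\mapsto A(t,y(t))$ takes values in the compact set $I\times \bar B_{2\widehat M}(0)$ and is bounded, in the operator norm, by $\bar a \coloneqq \max_{(\tau,z)\in I\times\bar B_{2\widehat M}(0)}\|A(\tau,z)\|_{\R^{n\times n}}$, which is finite by continuity of $A$ and compactness. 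As this coefficient is measurable and essentially bounded and $v\coloneqq \delta\mathbf{v}(y_0)\in L^2(I;\R^n)$, standard linear Carathéodory theory (for instance through the state transition matrix $\Phi$ solving $\dot\Phi=A(\cdot,y)\Phi$, $\Phi(0)=\mathrm{Id}$, together with the variation-of-constants formula) provides a unique solution $\delta y\in W^{1,2}(I;\R^n)=W_T$ of \eqref{eq:linearensemble}.

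Second, I would derive \eqref{eq:aprioriindependent}. Estimating $\tfrac12\frac{d}{dt}|\delta y(t)|^2=(A(t,y(t))\delta y(t)+v(t),\delta y(t))_{\R^n}\le (\bar a+\tfrac12)|\delta y(t)|^2+\tfrac12|v(t)|^2$ and applying Gronwall's inequality gives $\|\delta y\|_{L^\infty(I;\R^n)}^2\le C_1\big(|\delta y_0|^2+\|v\|_{L^2}^2\big)$ with $C_1$ depending continuously on $\bar a$ (and on $T$) only. Reinserting this into $\dot{\delta y}=A(\cdot,y)\delta y+v$ bounds $\|\dot{\delta y}\|_{L^2}\le \bar a\|\delta y\|_{L^2}+\|v\|_{L^2}$, whence \eqref{eq:aprioriindependent} follows with a constant $C$ depending continuously on $\bar a$. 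Because every trajectory $\mathbf{y}(y_0)$, $y_0\in Y_0$, remains in the same ball $\bar B_{2\widehat M}(0)$, the number $\bar a$ is a single bound valid for all $y_0$, so $C$ is independent of $y_0$, as claimed.

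Finally, for the regularity of $\delta\mathbf{y}\colon Y_0\to W_T$ I would invoke continuous dependence on data. If $y_0^k\to y_0$ in $Y_0$, then $\mathbf{y}(y_0^k)\to\mathbf{y}(y_0)$ in $W_T$, hence in $\mathcal{C}(\bar I;\R^n)$, and uniform continuity of $A$ on the compact set $I\times\bar B_{2\widehat M}(0)$ yields $A(\cdot,\mathbf{y}(y_0^k)(\cdot))\to A(\cdot,\mathbf{y}(y_0)(\cdot))$ uniformly on $\bar I$; moreover $\delta\mathbf{v}(y_0^k)\to\delta\mathbf{v}(y_0)$ in $L^2$ and $\delta\mathbf{y}_0(y_0^k)\to\delta\mathbf{y}_0(y_0)$ in $\R^n$. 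Writing $w_k\coloneqq \delta\mathbf{y}(y_0^k)-\delta\mathbf{y}(y_0)$ and subtracting the two realizations of \eqref{eq:linearensemble}, the difference solves a linear equation of the same type with coefficient $\mathbf{A}(\mathbf{y}(y_0^k))$, forcing $(\mathbf{A}(\mathbf{y}(y_0^k))-\mathbf{A}(\mathbf{y}(y_0)))\delta\mathbf{y}(y_0)+(\delta\mathbf{v}(y_0^k)-\delta\mathbf{v}(y_0))$ and initial datum $\delta\mathbf{y}_0(y_0^k)-\delta\mathbf{y}_0(y_0)$. Applying the uniform a priori bound from the previous step shows $\wnorm{w_k}\to 0$, so $\delta\mathbf{y}$ is continuous and, by compactness of $Y_0$, belongs to $\mathcal{C}(Y_0;W_T)$. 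I expect the only genuine obstacle to be this last step: one must verify that the coefficient perturbation is small in the relevant sense, namely that the uniform convergence $A(\cdot,\mathbf{y}(y_0^k)(\cdot))\to A(\cdot,\mathbf{y}(y_0)(\cdot))$ together with the a priori bound on $\delta\mathbf{y}(y_0)$ controls $\|(\mathbf{A}(\mathbf{y}(y_0^k))-\mathbf{A}(\mathbf{y}(y_0)))\delta\mathbf{y}(y_0)\|_{L^2}$; the existence and the estimate themselves are routine linear ODE theory combined with Gronwall's inequality.
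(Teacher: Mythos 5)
Your proof is correct, and its first two steps (pointwise existence via linear ODE theory and the energy--Gronwall estimate giving a constant that depends only on $\bar a = \max_{(\tau,z)\in I\times\bar B_{2\widehat M}(0)}\|A(\tau,z)\|_{\R^{n\times n}}$, hence is uniform in $y_0$) coincide with the paper's argument. Where you genuinely diverge is the continuity of $y_0\mapsto\delta\mathbf{y}(y_0)$: the paper proceeds by soft compactness, using the uniform a priori bound to extract a weakly convergent subsequence $\delta y_k\rightharpoonup\delta y$ in $W_T$, passing to the limit in the equation (strong $L^2$ convergence of $\mathbf{A}(y_k)\delta y_k+\delta v_k$ upgrades the weak convergence to strong convergence in $W_T$), and then invoking uniqueness of the limit equation to get convergence of the whole sequence; you instead subtract the two realizations of \eqref{eq:linearensemble} and apply the \emph{same} uniform a priori estimate to the difference $w_k$, whose forcing $(\mathbf{A}(\mathbf{y}(y_0^k))-\mathbf{A}(\mathbf{y}(y_0)))\delta\mathbf{y}(y_0)+(\delta\mathbf{v}(y_0^k)-\delta\mathbf{v}(y_0))$ vanishes in $L^2$ by uniform continuity of $A$ on the compact set $\bar I\times\bar B_{2\widehat M}(0)$ together with the uniform convergence $\mathbf{y}(y_0^k)\to\mathbf{y}(y_0)$ in $\mathcal{C}(\bar I;\R^n)$. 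The step you flag as the potential obstacle is in fact fine, precisely because the perturbation term is estimated by $\sup_{t}\|A(t,\mathbf{y}(y_0^k)(t))-A(t,\mathbf{y}(y_0)(t))\|\,\|\delta\mathbf{y}(y_0)\|_{L^2}$ and the constant in the a priori bound for $w_k$ (whose coefficient is $\mathbf{A}(\mathbf{y}(y_0^k))$, still bounded by $\bar a$) is independent of $k$. Your route is more elementary and quantitative --- it avoids subsequence extraction and uniqueness-of-limit arguments, and yields an explicit modulus of continuity for $\delta\mathbf{y}$ in terms of those of $\mathbf{y}$, $\delta\mathbf{v}$, $\delta\mathbf{y}_0$ and of $A$ --- whereas the paper's compactness argument is the one that generalizes more readily to settings where only weak convergence of the data is available.
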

\begin{proof}
Let~$y_0 \in Y_0$ be arbitrary but fixed. Then there is a unique solution~$\delta y \in W_T$ to~\eqref{eq:linearensemble} which satisfies
\begin{align*}
\frac{1}{2} |\delta y (t)|^2 &= \frac{1}{2} |\delta y (0)|^2+ \int^t_0 (\dot{\delta y}(s),\delta (y)(s))  ~\mathrm{d}s \\
&= \frac{1}{2} |\delta y_0|^2+ \int^t_0 (\delta y(s), A(t,y(t))\delta (y)(s))+ (\delta v(s), \delta y(s))  ~\mathrm{d}s \\
& \leq \frac{1}{2} |\delta y_0|^2+ \frac{1}{2} |\delta v|^2_{L^2}+ \frac{1}{2}\int^t_0 \left(2 \max_{(\tau,y)\in I \times \bar{B}_{2 \widehat{M}(0)} }\|A(\tau,y)\|_{\R^{n \times n}} +1\right) |\delta y(s)|^2~\mathrm{d}s
\end{align*}
for all~$t \in I$. Setting
\begin{align*}
L \coloneqq \left(2 \max_{(z,y)\in I \times \bar{B}_{2 \widehat{M}(0)} }\|A(z,y)\|_{\R^{n \times n}} +1\right),
\end{align*}
Gronwall's inequality implies that
\begin{align*}
\|\delta y\|_{L^\infty} \leq e^{TL}(|\delta y_0|+ |\delta v|_{L^2}).
\end{align*}
By~\eqref{eq:linearensemble} we further get
$
\|\dot{\delta y}\|_{L^2} \leq L (\|\delta y\|_{L^2} + \|v\|_{L^2}),
$
which implies ~\eqref{eq:aprioriindependent}.
Next, let~$y^k_0 \in Y_0$ denote a convergent sequence with limit~$y_0$. For abbreviation set
\begin{align*}
\delta y_k \coloneqq \delta \mathbf{y}(y^k_0),~y_k\coloneqq \mathbf{y}(y^k_0),~\delta v_k \coloneqq \delta \mathbf{v}(y^k_0),~\delta y_0 \coloneqq \delta \mathbf{y}_0(y^k_0)
\end{align*}
as well as
\begin{align*}
y\coloneqq \mathbf{y}(y_0),~\delta v \coloneqq \delta \mathbf{v}(y_0),~\delta y_0 \coloneqq \delta \mathbf{y}_0(y_0).
\end{align*}
Note that~$\delta y_k$ is uniformly bounded in~$W_T$ by \eqref{eq:aprioriindependent}. Thus it admits a subsequence, denoted by the same index, with~$\delta y_k \rightharpoonup \delta y$ in~$W_T$ for some~$\delta y \in W_T$. This implies
\begin{align*}
\delta y_k (0) \rightarrow \delta y(0)~\text{in}~\R^n,~ \delta y_k  \rightarrow \delta y~\text{in}~L^\infty(I;\R^n),~\dot{\delta y}_k  \rightharpoonup \dot{\delta y}~\text{in}~L^2(I;\R^n).
\end{align*}
Moreover, due to the continuity of~$\mathbf{y},\delta \mathbf{v}$ and~$\delta \mathbf{y}_0$ , we get
\begin{align*}
\dot{\delta y}_k=\mathbf{A}(y_k)\delta y_k+ \delta v_k \rightarrow \mathbf{A}(y)\delta y+ \delta v~\text{in}~L^2(I;\R^n),~\delta y_k (0) \rightarrow \delta y_0~\text{in}~\R^n.
\end{align*}
Summarizing the previous observations we conclude that
\begin{align*}
\dot{\delta y}= \mathbf{A}(y)\delta y+\delta v,~\delta y(0)= \delta y_0
\end{align*}
as well as~$\delta y_k \rightarrow \delta y$ in~$W_T$,
and thus~$\delta y= \delta \mathbf{y}(y_0) $. By uniqueness of solutions to the above equation ~$\delta \mathbf{y}(y^k_0)\rightarrow \delta \mathbf{y}(y_0) $ for the whole sequence in~$W_T$ follows,  and therefore~$\delta \mathbf{y} \in \mathcal{C}(Y_0;W_T)$.
\end{proof}

Next we address nonlinear systems of the form:
\begin{align}
\label{eq:pertstate}
\dot{y_v}= \mathbf{f}(y_v)+ \mathbf{g}(y_v) \mathcal{F}^*(y_v)+v, \quad
y_v(0)=y_0
\end{align}
where~$v \in L^2(I ; \R^n)$ is a  perturbation.

\begin{prop} \label{thm:existspert}
Let Assumption~\ref{ass:feedbacklaw} hold.
Then there exist an open neighbourhood~$V_1 \subset L^2(I; \R^n)$ of~$0$
and an open neighbourhood~$\mathbf{Y}_0$ of $Y_0$ such
that~\eqref{eq:pertstate} admits a unique solution~$y_v=\mathbf{y}^v
(y_0)\in \mathcal{Y}_{ad} $ for every pair~$(v,y_0)\in V_1 \times
\mathbf{Y}_0$. Moreover the mapping
\begin{align}
\mathbf{y}^\bullet(\bcdot) \colon V_1  \times \mathbf{Y}_0 \to
\mathcal{Y}_{ad}   , \quad (v, y_0) \mapsto \mathbf{y}^v(y_0)
\end{align}
is continuously Fr\'echet differentiable.
\end{prop}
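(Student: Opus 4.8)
The plan is to realize $\mathbf{y}^v(y_0)$ as the implicit solution of a parametrized operator equation and to apply the implicit function theorem, promoting the resulting pointwise (in $y_0$) statements to uniform ones by a compactness argument on $Y_0$. Concretely, I would introduce
\[
G \colon W_T \times L^2(I;\R^n) \times \R^n \to L^2(I;\R^n)\times \R^n,\qquad
G(y,v,y_0)=\bigl(\dot y-\mathbf{f}(y)-\mathbf{g}(y)\F^*(y)-v,\ y(0)-y_0\bigr).
\]
By Assumption~\ref{ass:feedbacklaw} the Nemitsky operators $\mathbf{f},\mathbf{g}$ are of class $\mathcal{C}^2$ and $\F^*\in\mathcal{C}^1$ on $\mathcal{Y}_{ad}$, so $G$ is jointly $\mathcal{C}^1$ on a neighbourhood of the set $\{(\mathbf{y}^*(y_0),0,y_0):y_0\in Y_0\}$; moreover $G(\mathbf{y}^*(y_0),0,y_0)=0$ for every $y_0\in Y_0$ by $(\mathbf{A.3})$.

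The partial derivative $\partial_y G(\mathbf{y}^*(y_0),0,y_0)$ is precisely the operator $T_*(y_0)$ from Proposition~\ref{thm:aprioriW12}, namely
\[
\partial_y G(\mathbf{y}^*(y_0),0,y_0)\,\delta y
=\bigl(\dot{\delta y}-D\mathbf{f}(\mathbf{y}^*(y_0))\delta y-[D\mathbf{g}(\mathbf{y}^*(y_0))\delta y]\F^*(\mathbf{y}^*(y_0))-\mathbf{g}(\mathbf{y}^*(y_0))D\F^*(\mathbf{y}^*(y_0))\delta y,\ \delta y(0)\bigr).
\]
Since $\F^*$ is a superposition operator, its derivative is again pointwise in $t$, so this is a linear ODE operator with a bounded, continuous coefficient matrix evaluated along $\mathbf{y}^*(y_0)\in\mathcal{Y}_{ad}$, which takes values in $\bar B_{2\widehat M}(0)$. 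Proposition~\ref{prop:s} then provides, for every right-hand side $(w,\delta y_0)$, a unique solution of $T_*(y_0)\delta y=(w,\delta y_0)$ together with the a priori bound~\eqref{eq:aprioriindependent}; hence $T_*(y_0)\in\mathcal{B}(W_T,L^2(I;\R^n)\times\R^n)$ is boundedly invertible, with $\|T_*(y_0)^{-1}\|$ bounded uniformly in $y_0\in Y_0$ by~\eqref{eq:uniformT}.

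For each fixed $y_0\in Y_0$ the implicit function theorem applied at $(\mathbf{y}^*(y_0),0,y_0)$ then yields open neighbourhoods $V_{y_0}\ni 0$ and $U_{y_0}\ni y_0$ and a $\mathcal{C}^1$ map $(v,y_0')\mapsto y_v(y_0')$ solving $G=0$ locally. The core of the argument is to make these neighbourhoods uniform: since $Y_0$ is compact, finitely many $U_{y_0^{(1)}},\dots,U_{y_0^{(K)}}$ cover $Y_0$, and I set $\mathbf{Y}_0=\bigcup_j U_{y_0^{(j)}}$ and $V_1=\bigcap_j V_{y_0^{(j)}}$, the latter still an open neighbourhood of $0$. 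On overlaps the local solutions agree by the local uniqueness in the theorem, so they patch into a single $\mathcal{C}^1$ map $\mathbf{y}^\bullet(\bcdot)$ on $V_1\times\mathbf{Y}_0$. Because $\wnorm{\mathbf{y}^*(y_0)}\le M_{Y_0}$ and this map is continuous, after possibly shrinking $V_1$ and $\mathbf{Y}_0$ one retains $\wnorm{\mathbf{y}^v(y_0)}<2M_{Y_0}$, placing the solutions in the interior of $\mathcal{Y}_{ad}$.

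Finally I would upgrade the local uniqueness from the theorem to uniqueness within all of $\mathcal{Y}_{ad}$: using that $\mathbf{f},\mathbf{g},\F^*$ are Lipschitz continuous on the bounded set $\mathcal{Y}_{ad}$ (Assumption~\ref{ass:feedbacklaw} together with boundedness of $D\F^*$ there), any two solutions $y_1,y_2\in\mathcal{Y}_{ad}$ for the same $(v,y_0)$ have a difference $\delta=y_1-y_2$ obeying a linear differential inequality with $\delta(0)=0$, so Gronwall's lemma forces $\delta\equiv 0$. The main obstacle is exactly the uniformity step, i.e.\ securing a single pair $V_1,\mathbf{Y}_0$ valid for all $y_0\in Y_0$ simultaneously while keeping the patched solutions inside $\mathcal{Y}_{ad}$; this rests on the uniform invertibility bound~\eqref{eq:uniformT} for $T_*(y_0)^{-1}$ and on compactness of $Y_0$, whereas the differentiability and the remaining ODE estimates are routine once invertibility is established.
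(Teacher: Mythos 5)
Your proposal is correct and takes essentially the same route as the paper's own proof: the same operator $G$, the implicit function theorem at each point $(\mathbf{y}^*(y_0),0,y_0)$ with uniform invertibility of the linearized ODE operator via Gronwall-type estimates, a finite subcover of the compact set $Y_0$ with $\mathbf{Y}_0$ taken as the union of the $y_0$-neighbourhoods and $V_1$ as the intersection of the $v$-neighbourhoods, and uniqueness within $\mathcal{Y}_{ad}$ from Lipschitz continuity of $\mathbf{f},\mathbf{g},\F^*$. One minor caveat: the uniform bound on $T_*(y_0)^{-1}$ should be quoted from Proposition~\ref{prop:s} (as you also do) rather than from~\eqref{eq:uniformT}, since Proposition~\ref{thm:aprioriW12} logically depends, through Proposition~\ref{thm:existenceneuralnetwork}, on the present result.
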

\begin{proof}
Define the mapping
\begin{align*}
G \colon \mathcal{Y}_{ad} \times \R^n \times L^2(I; \R^n)
\to L^2(I; \R^n) \times \R^n
\end{align*}
with
\begin{align*}
G(y,y_0,v)= \left(
\begin{array}{c}
\dot{y}-\mathbf{f}(y)-\mathbf{g}(y)\mathcal{F}^*(y)-v\\
y(0)-y_0\\
\end{array}
\right).
\end{align*}
Now fix an arbitrary~$\bar{y}_0 \in Y_0$ and, utilizing $(\mathbf{A.3})$   denote by $\bar{y}=\mathbf{y}^*(y_0)\in \operatorname{int}
\mathcal{Y}_{ad}$ the unique solution in~$\mathcal{Y}_{ad}$ to the unperturbed closed loop system~$G(\bar{y},\bar{y}_0,0)=0$. Since~$G$ is  of
class~$\mathcal{C}^1$ in a neighborhood of~$(\bar{y},\bar{y}_0,0)$ we have
\begin{align*}
D_y G(y,y_0,v)\delta y = \left(
\begin{array}{c}
\dot{\delta y}-D\mathbf{f}(y)\delta y-\lbrack D\mathbf{g}(y) \delta y \rbrack
\mathcal{F}^*(y) - \mathbf{g}(y) \partial_y \mathcal{F}^*(y)\delta y\\
\delta y(0)\\
\end{array}
\right).
\end{align*}
It is straightforward that the linearized equation
\begin{align*}
D_y G(\bar{y},\bar{y}_0,v)\delta y=\left(
\begin{array}{c}
\delta v \\ \delta y_0\\
\end{array}
\right)
\end{align*}
admits a unique solution~$\delta \bar{y}\in W_T$ for every~$\delta v \in L^2(I;\R^n),~\delta y_0 \in \R^n$. Moreover, applying Gronwall's lemma yields~$c>0$ independent of~$\bar{y},~\bar{y}_0$ with
\begin{align*}
\wnorm{\delta \bar{y}} \leq c(\|\delta v\|_{L^2(I;\R^n)}+|\delta y_0|), \quad \forall \delta v \in L^2(I;\R^n),~\delta y_0 \in \R^n.
\end{align*}
Thus from the implicit function theorem we get
constants~$\kappa_1=\kappa_1(\bar{y}_0)$
and~$\kappa_2=\kappa_2(\bar{y}_0)$, such that for every~$y_0\in \R^n$
with~$|y_0-\bar{y}_0|<\kappa_1$ and~$\|v\|_{L^2(I;\R^n)}< \kappa_2$ there
exists~$\mathbf{y}^v(y_0) \in \mathcal{Y}_{ad}$
with~$G(\mathbf{y}^v(y_0),y_0,v)=0$.  By $(\textbf{A.1})$  it is the
unique solution to~\eqref{eq:pertstate} in~$\mathcal{Y}_{ad}$. Moreover, the mapping
\begin{align*}
\mathbf{y}^\bullet(\cdot) \colon  B_{\kappa_2}(0) \times
B_{\kappa_1}(\bar{y}_0) \to \mathcal{Y}, \quad (v,y_0) \mapsto
\mathbf{y}^v(y_0)
\end{align*}
is of class~$\mathcal{C}^1$.
Observe that repeating this argument for every~$y_0 \in Y_0$ yields an open covering of~$Y_0$ i.e.
\begin{align*}
Y_0 \subset \bigcup_{\bar{y}_0 \in Y_0}
B_{\kappa_1(\bar{y}_0)}(\bar{y}_0).
\end{align*}
Since~${Y}_0$ is compact there exists a finite set of initial
conditions~$\{\bar{y}^i_0\}^N_{i=1}\subset Y_0$, including~$0$, such that
\begin{align*}
Y_0 \subset \mathbf{Y}_0 :=\bigcup^N_{i=1}
B_{\kappa_1(\bar{y}^i_0)}(\bar{y}^i_0).
\end{align*}
Set~$V= \bigcap^N_{i=1} B_{\kappa_2(\bar{y}^i_0)}(0) \subset
L^2(I;\R^n) $. Summarizing these arguments  yields the
existence of a~$\mathcal{C}^1$-mapping
\begin{align*}
\mathbf{y}^\cdot(\cdot) \colon V \times \mathbf{Y}_0 \to
\mathcal{Y}_{ad}, \quad \mathbf{y}^v(y_0)~\text{ uniquely
solves}~\eqref{eq:pertstate}~\text{in}~\mathcal{Y}_{ad}.
\end{align*}
\end{proof}
We use the following consequences of the previous proposition.
\begin{coroll} \label{coroll:locallipschitzofstate}
There exists an open neighborhood~$V_2 \subset V_1 \subset L^2(I;\R^n)$
of~$0$ as well as~$c>0$ such that
\begin{align*}
\wnorm{\mathbf{y}^{v_1}(y_0)-\mathbf{y}^{v_2}(y_0)} \leq c
\|v_1-v_2\|_{L^2(I;\R^n)} \quad \forall y_0 \in Y_0,~v_1 \in V_2, v_2 \in V_2
\end{align*}
and
\begin{align*}
\wnorm{\mathbf{y}^v(y_0)} \leq M_{Y_0} +c \|v\|_{L^2(I;\R^n)} \quad
\forall y_0 \in Y_0,~v \in V_2,
\end{align*}
hold. Here~$M_{Y_0}$ denotes the constant from Assumption $(\mathbf{A.3})$.
\end{coroll}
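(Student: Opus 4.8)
The plan is to deduce both estimates from the $\mathcal{C}^1$-regularity of the solution map furnished by Proposition~\ref{thm:existspert}, by establishing a \emph{uniform} bound on its partial Fr\'echet derivative with respect to the perturbation~$v$ and then integrating along line segments. First I would recall that, by Proposition~\ref{thm:existspert}, the map $(v,y_0)\mapsto \mathbf{y}^v(y_0)$ is of class~$\mathcal{C}^1$ on $V_1\times\mathbf{Y}_0$. Differentiating the state equation~\eqref{eq:pertstate} with respect to~$v$ (equivalently, inspecting $D_yG$ from the proof of Proposition~\ref{thm:existspert}) shows that, for fixed $y_0\in Y_0$ and $v\in V_1$, the directional derivative $\delta y\coloneqq D_v\mathbf{y}^v(y_0)\delta v$ solves the linearized system
\begin{align*}
\dot{\delta y}=D\mathbf{f}(y_v)\delta y+[D\mathbf{g}(y_v)\delta y]\mathcal{F}^*(y_v)+\mathbf{g}(y_v)\partial_y\mathcal{F}^*(y_v)\delta y+\delta v,\quad \delta y(0)=0,
\end{align*}
where $y_v\coloneqq\mathbf{y}^v(y_0)\in\mathcal{Y}_{ad}$. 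This is precisely a linear system of the form treated in Proposition~\ref{prop:s}, the coefficient $\mathbf{A}$ being induced by a continuous matrix field $A(t,\cdot)$ assembled from $D_yf$, $D_yg$ contracted with $F^*$, and $g\,\partial_yF^*$, evaluated along~$y_v$.

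The second step is to establish uniformity of the resulting bound. Since $y_v\in\mathcal{Y}_{ad}$ and, by Assumption~\ref{ass:feedbacklaw}~$(\mathbf{A.1})$--$(\mathbf{A.2})$, the operators $D\mathbf{f}$, $D\mathbf{g}$, $\mathbf{g}$ and $\partial_y\mathcal{F}^*=D\mathcal{F}^*$ are bounded on $\mathcal{Y}_{ad}$, the sup-norm of the effective coefficient matrix is bounded independently of $y_0\in Y_0$ and of $v$ ranging in a sufficiently small ball. Hence Proposition~\ref{prop:s} (equivalently, a direct Gronwall estimate as in its proof) yields a constant $C>0$, independent of $y_0$ and $v$, with
\begin{align*}
\wnorm{D_v\mathbf{y}^v(y_0)\delta v}\leq C\|\delta v\|_{L^2(I;\R^n)}\qquad\forall\,\delta v\in L^2(I;\R^n).
\end{align*}
To make this rigorous I would fix $V_2\coloneqq B_{\rho}(0)\subset V_1$, a ball of radius $\rho$ small enough that $\mathbf{y}^v(y_0)\in\mathcal{Y}_{ad}$ for every $(v,y_0)\in V_2\times Y_0$; the convexity of $V_2$ is what licenses the segment argument below.

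For the Lipschitz estimate I would then exploit convexity of $V_2$: writing $v_s\coloneqq v_2+s(v_1-v_2)\in V_2$ for $s\in[0,1]$ and using the fundamental theorem of calculus,
\begin{align*}
\mathbf{y}^{v_1}(y_0)-\mathbf{y}^{v_2}(y_0)=\int_0^1 D_v\mathbf{y}^{v_s}(y_0)(v_1-v_2)\,\mathrm{d}s,
\end{align*}
so taking $W_T$-norms and inserting the uniform bound gives $\wnorm{\mathbf{y}^{v_1}(y_0)-\mathbf{y}^{v_2}(y_0)}\leq c\,\|v_1-v_2\|_{L^2(I;\R^n)}$ for all $y_0\in Y_0$, with $c=C$. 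The second estimate is then immediate: taking $v_2=0$ we have $\mathbf{y}^0(y_0)=\mathbf{y}^*(y_0)$, which satisfies $\wnorm{\mathbf{y}^*(y_0)}\leq M_{Y_0}$ by~$(\mathbf{A.3})$, and the triangle inequality yields $\wnorm{\mathbf{y}^v(y_0)}\leq M_{Y_0}+c\,\|v\|_{L^2(I;\R^n)}$ for all $v\in V_2$.

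The main obstacle I anticipate is securing the uniformity of the constant $C$ in both $y_0\in Y_0$ and $v\in V_2$ simultaneously. The per-point $\mathcal{C}^1$-regularity from Proposition~\ref{thm:existspert} only gives neighborhood-dependent constants $\kappa_1(\bar y_0),\kappa_2(\bar y_0)$; the uniform bound hinges on the fact that \emph{all} intermediate solutions $\mathbf{y}^{v_s}(y_0)$ remain confined to $\mathcal{Y}_{ad}$, where the coefficient operators are uniformly bounded. This is exactly why one must pass from $V_1$ to a small convex ball $V_2\subset V_1$, guaranteeing both confinement to $\mathcal{Y}_{ad}$ and the applicability of the segment integration.
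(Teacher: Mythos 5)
Your proof is correct and takes essentially the same route as the paper: the paper dispatches the first estimate in one line (``continuous differentiability of $v \mapsto \mathbf{y}^v(y_0)$ and compactness of $Y_0$'') and proves the second exactly as you do, via $\mathbf{y}^0(y_0)=\mathbf{y}^*(y_0)$, the triangle inequality, and $(\mathbf{A.3})$. Your elaboration --- the uniform bound on $D_v\mathbf{y}^v(y_0)$ obtained from the linearized equation with coefficients confined to $\mathcal{Y}_{ad}$ (the uniform Gronwall constant is already recorded in the proof of Proposition~\ref{thm:existspert}), followed by segment integration over a convex ball $V_2$ --- is precisely the mechanism the paper leaves implicit, and is in fact the more careful justification, since a closed ball in $L^2(I;\R^n)$ is not compact and compactness of $Y_0$ alone would not yield the uniformity in $v$.
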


\begin{proof}
The first assertion follows from the continuous differentiability of $v\to \mathbf{y}^v(y_0)$ and compactness of $Y_0$.
To verify the  second we use that $\mathbf{y}^*(y_0) = \mathbf{y}^0(y_0)$ and estimate
\begin{align*}
\wnorm{\mathbf{y}^v(y_0)} &\leq \wnorm{\mathbf{y}^*(y_0)} +
\wnorm{\mathbf{y}^v(y_0)-\mathbf{y}^0(y_0)}.
\end{align*}
The claim now follows from the first inequality  and  $(\mathbf{A.3})$.
\end{proof}

\bibliographystyle{siam}
\bibliography{referencesnew}

\begin{thebibliography}{10}

\bibitem{AK18}
{\sc W.~Arendt and M.~Kreuter}, {\em Mapping theorems for {S}obolev spaces of
  vector-valued functions}, Studia Math., 240 (2018), pp.~275--299.

\bibitem{akk2020}
{\sc B.~Azmi, D.~Kalise, and K.~Kunisch}, {\em Optimal feedback law recovery by
  gradient-augmented sparse polynomial regression}, 2020.

\bibitem{BaBo19}
{\sc J.~Barzilai and J.~M. Borwein}, {\em Two-point step size gradient
  methods}, IMA J. Numer. Anal., 8 (1988), pp.~141--148.

\bibitem{bert19}
{\sc D.~Bertsekas}, {\em Reinforcement Learning and Optimal Control}, Athena
  Scientific, 2019.

\bibitem{bggk13}
{\sc O.~Bokanowski, J.~Garcke, M.~Griebel, and I.~Klompmaker}, {\em {An
  adaptive sparse grid semi-lagrangian scheme for first order Hamilton-Jacobi
  Bellman equations}}, {Journal of Scientific Computing}, 55 (2013), pp.~pp.
  575--605.

\bibitem{cloy18}
{\sc Y.~T. Chow, W.~Li, S.~Osher, and W.~Yin}, {\em Algorithm for
  hamilton-jacobi equations in density space via a generalized hopf formula},
  2018.

\bibitem{DU77}
{\sc J.~Diestel and J.~J. Uhl, Jr.}, {\em Vector measures}, American
  Mathematical Society, Providence, R.I., 1977.
\newblock With a foreword by B. J. Pettis, Mathematical Surveys, No. 15.

\bibitem{dkk19}
{\sc S.~Dolgov, D.~Kalise, and K.~Kunisch}, {\em Tensor decomposition for
  high-dimensional {H}amilton-{J}acobi-{B}ellman equations}, 2019.

\bibitem{foss2020}
{\sc K.~Fackeldey, M.~Oster, L.~Sallandt, and R.~Schneider}, {\em Approximative
  policy iteration for exit time feedback control problems driven by stochastic
  differential equations using tensor train format}, 2020.

\bibitem{ff14}
{\sc M.~Falcone and R.~Ferretti}, {\em Semi-{L}agrangian approximation schemes
  for linear and {H}amilton-{J}acobi equations}, Society for Industrial and
  Applied Mathematics (SIAM), Philadelphia, PA, 2014.

\bibitem{ff16}
{\sc M.~Falcone and R.~Ferretti}, {\em Numerical methods for
  {H}amilton-{J}acobi type equations}, in Handbook of numerical methods for
  hyperbolic problems, vol.~17 of Handb. Numer. Anal., Elsevier/North-Holland,
  Amsterdam, 2016, pp.~603--626.

\bibitem{FS06}
{\sc W.~H. Fleming and H.~M. Soner}, {\em Controlled {M}arkov processes and
  viscosity solutions}, vol.~25 of Stochastic Modelling and Applied
  Probability, Springer, New York, second~ed., 2006.

\bibitem{hornik91}
{\sc K.~Hornik}, {\em Approximation capabilities of multilayer feedforward
  networks}, Neural Networks, 4 (1991), pp.~251--257.

\bibitem{kk18}
{\sc D.~Kalise and K.~Kunisch}, {\em Polynomial approximation of
  high-dimensional {H}amilton-{J}acobi-{B}ellman equations and applications to
  feedback control of semilinear parabolic {PDE}s}, SIAM J. Sci. Comput., 40
  (2018), pp.~A629--A652.

\bibitem{kkr18}
{\sc D.~Kalise, K.~Kunisch, and Z.~Rao}, eds., {\em Hamilton-{J}acobi-{B}ellman
  equations}, vol.~21 of Radon Series on Computational and Applied Mathematics,
  De Gruyter, Berlin, 2018.

\bibitem{KW2020}
{\sc K.~Kunisch and D.~Walter}, {\em Semiglobal optimal feedback stabilization
  of autonomous systems via deep neural network approximation}, 2020.

\bibitem{LV09}
{\sc F.~L. Lewis and D.~Vrabie}, {\em Reinforcement learning and adaptive
  dynamic programming for feedback control}, IEEE Circuits and Systems
  Magazine, 9 (2009), pp.~32--50.

\bibitem{lr86}
{\sc P.~L. Lions and J.-C. Rochet}, {\em Hopf formula and multitime
  {H}amilton-{J}acobi equations}, Proceedings of the American Mathematical
  Society, 96 (1986), pp.~79--84.

\bibitem{ngk19}
{\sc T.~Nakamura-Zimmerer, Q.~Gong, and W.~Kang}, {\em Adaptive deep learning
  for high-dimensional {H}amilton-{J}acobi-{B}ellman equations}, 2019.

\bibitem{onlfor2020}
{\sc D.~Onken, L.~Nurbekyan, X.~Li, S.~W. Fung, S.~Osher, and L.~Ruthotto},
  {\em A neural network approach applied to multi-agent optimal control}, 2021.

\bibitem{P99}
{\sc A.~Pinkus}, {\em Approximation theory of the {MLP} model in neural
  networks}, 8 (1999), pp.~143--195.

\bibitem{recht18}
{\sc B.~Recht}, {\em A tour of reinforcement learning: The view from continuous
  control}, 2018.

\bibitem{vls14}
{\sc K.~Vamvoudakis, F.~Lewis, and S.~S. Ge}, {\em Neural networks in feedback
  control systems}, Mechanical Engineers' Handbook,  (2014), pp.~1--52.

\end{thebibliography}
\end{document}